\theoremstyle{plain}
\newtheorem{thm}{Theorem}[section]
\newtheorem{theorem}{Theorem}
\newtheorem{lemma}[thm]{Lemma}
\newtheorem{claim}[thm]{Claim}
\newtheorem{corollary}[thm]{Corollary}
\newtheorem{question}{Question}
\newtheorem*{conjecture*}{Conjecture}
\numberwithin{definition}{section}
\numberwithin{remark}{section}
\title{\LARGE Flexibility of planar graphs without $C_4$ and $C_5$\thanks{This work is supported by NSFC(11971270, 11631014, 11271006) of China and Shandong Province Natural Science Foundation (ZR2018MA001) of China }}
\author{Donglei Yang, Fan Yang\thanks{Corresponding author. E-mail address: yangfan5262@163.com.}\\
{\small School of Mathematics, Shandong University, Jinan 250100, China}
 }
\date{}
\begin{document}
\maketitle

\baselineskip 0.65cm
\begin{abstract}
Let $G$ be a $\{C_4, C_5\}$-free planar graph with a list assignment $L$. Suppose a preferred color is given for some of the vertices. We prove that if all lists have size at least four, then there exists an $L$-coloring respecting at least a constant fraction of the preferences.

\baselineskip 0.6cm {\bf Key words:} Planar graph, List coloring, Flexibility.
\end{abstract}
\section{Introduction}
\indent

In what follows, all graphs considered are simple, finite, and undirected, and we follow \cite{Bon} for the terminologies and notation not defined here. A plane graph is a particular drawing of a planar graph in the Euclidean plane. Given a plane graph $G$, we denote its vertex set, edge set, face set and minimum degree by $V(G)$, $E(G)$, $F(G)$ and $\delta(G)$, respectively. The degree $d(v)$ is the number of edges incident with $v$. A vertex $v$ is called a $k$-vertex ($k^{+}$-vertex, or $k^{-}$-vertex) if $d(v)=k$ ($d(v)\geq k$, or $d(v)\leq k$, resp.). For any face $f\in F(G)$, the degree of $f$, denoted by $d(f)$, is the length of the shortest boundary walk of $f$, where each cut edge is counted twice. Analogously, a $k$-face ($k^+$-face, or $k^{-}$-face) is a face of degree $k$ (at least $k$, or at most $k$, resp.). We write $f=u_1u_2\ldots u_nu_1$ if $u_1, u_2, \ldots, u_n$ are the boundary vertices of $f$ in the clockwise order, and for integers $d_1, \ldots d_n$, we say that $f$ is a $(d_1, \ldots  d_n)$-face if $d(u_i)=d_i$ for all $i\in\{1,2, \ldots, n\}$. We say that $f$ is a $(d_1^{+}, \ldots  d_n)$-face if $d(v_1)\geq d_1$ and $d(v_i)=d_i$ for all $i\in\{2, \ldots, n\}$; and similarly for other combinations. Let $f_k(v)$, $n_k(v)$ and $n_k(f)$ denote the number of $k$-faces incident with the vertex $v$, the number of $k$-vertices adjacent to the vertex $v$, and the number of $k$-vertices incident with the face $f$, respectively. Moreover, we use $\delta(f)$ to refer to the minimum degree of vertices incident with $f$.

A \emph{list assignment} $L$ for a graph $G$ is a function that to each vertex $v\in V(G)$ assigns a set $L(v)$ of colors, and an \emph{$L$-coloring} is a proper coloring $\phi$ such that $\phi(v)\in L(v)$ for all $v\in V(G)$. If $G$ has an $L$-coloring,
%We say that $L$ is an assignment for the graph $G$ if it assigns a list $L(v)$ of possible colors to each vertex $v$ of $G$. If $G$ has a proper coloring $\phi$ such that $\phi(v)\in L(v)$ for each vertex $v$ of $G$,
then we say that $G$ is \emph{$L$-colorable}.
%A graph $G$ is $k$-choosable if $G$ is $L$-colorable from every assignment $L$ of lists of size at least $k$.
Initiated by Dvo\v{r}\'{a}k, Norin, Postle \cite{listre}, a \emph{request} for a graph $G$ with a list assignment $L$ is a function $r$ with $\mathrm{dom}(r)\subseteq V(G)$ such that $r(v)\in L(v)$ for all $v\in \mathrm{dom}(r)$. For $\varepsilon >0$, a request $r$ is \emph{$\varepsilon$-satisfiable} if there exists an $L$-coloring $\phi$ of $G$ such that $\phi(v)=r(v)$ for at least $\varepsilon |\mathrm{dom}(r)|$ vertices $v\in \mathrm{dom}(r)$. We say that a graph $G$ with the list assignment $L$ is \emph{$\varepsilon$-flexible} if every request is $\varepsilon$-satisfiable. Furthermore, we emphasize a stronger weighted form. A \emph{weighted request} is a function $w$ that to each pair $(v,c)$ with $v\in V(G)$ and $c\in L(v)$ assigns a nonnegative real number. Let $w(G,L)=\sum_{v\in V(G), c\in L(v)}w(v,c)$. For $\varepsilon >0$, we say that $w$ is \emph{$\varepsilon$-satisfiable} if there exists an $L$-coloring $\phi$ of $G$ such that
\begin{equation*}
\sum_{v\in V(G)}w(v, \phi(v))\geq \varepsilon w(G,L).
\end{equation*}
We say that $G$ with the list assignment $L$ is \emph{weighted $\varepsilon$-flexible} if every weighted request is $\varepsilon$-satisfiable.

It is worth pointing out that a request $r$ is $1$-satisfiable if and only if the precoloring given by $r$ can be extended to an $L$-coloring of $G$. It is easy to deduce that weighted $\varepsilon$-flexibility implies $\varepsilon$-flexibility when we set there is only at most one color is requested at each vertex of $G$ and all such colors have the same weight $1$.

Dvo\v{r}\'{a}k, Norin and Postle \cite{listre} proposed this topic and studied some natural questions in the context. They obtained some elementary results as follows, we say a list assignment $L$ is an $f$-assignment if $|L(v)|\geq f(v)$ for all $v\in V(H)$.
\begin{description}
\item[$\bullet$] There exists $\varepsilon >0$ such that every planar graph with a $6$-assignment is $\varepsilon$-flexible.
\item[$\bullet$] There exists $\varepsilon >0$ such that every planar graph of girth at least five with a $4$-assignment is $\varepsilon$-flexible.
\item[$\bullet$] For every integer $d\geq2$, there exists $\varepsilon >0$ such that every graph of maximum average degree at most $d$ and choosability at most $d-1$ with a $(d+2)$-assignment is weighted $\varepsilon$-flexible.
\end{description}

Especially, in \cite{DMM}, Dvo\v{r}\'{a}k et al. studied the $4$-weighted flexibility of triangle-free planar graphs, and the result coincides with the choosability of triangle-free planar graphs. Later on, an interesting question is raised by Masa\v{r}\'{i}k \cite{MT} in the following.
\begin{question}\label{Q1}
Does there exist $\varepsilon >0$ such that every $C_4$-free planar graph $G$ with a $4$-assignment is weighted $\varepsilon$-flexible?
\end{question}
If it is true, this would be optimal in terms of choosability \cite{Xu}. However, it might be difficult to obtain such a result since even the procedure for getting the corresponding result of triangle-free is very involved. So far, triangle-free planar graphs are the only known result for $\varepsilon$-weighted flexibility with $4$-assignment.

Based on the above results, we proceed a step towards Question \ref{Q1} by proving the following theorem.

\begin{theorem}\label{thm3}
There exists $\varepsilon >0$ such that every $\{C_4, C_5\}$-free planar graph with a $4$-assignment is weighted $\varepsilon$-flexible.
\end{theorem}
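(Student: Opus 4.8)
Following the paradigm introduced by Dvo\v{r}\'{a}k, Norin and Postle~\cite{listre}, I would not try to produce a single good coloring directly, but instead construct a probability distribution on $L$-colorings that spreads its mass evenly across the admissible colors. Working with the critical case in which every list has size exactly four, it suffices to exhibit a random $L$-coloring $\phi$ such that
\[
\Pr[\phi(v)=c]\ \geq\ \varepsilon \qquad \text{for every } v\in V(G) \text{ and every } c\in L(v).
\]
Granting such a distribution, any weighted request $w$ is $\varepsilon$-satisfiable by linearity of expectation, since
\[
\mathbb{E}\Big[\sum_{v\in V(G)} w(v,\phi(v))\Big] \;=\; \sum_{v\in V(G)}\sum_{c\in L(v)} w(v,c)\,\Pr[\phi(v)=c] \;\geq\; \varepsilon\, w(G,L),
\]
so at least one coloring in the support attains the bound. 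The whole problem thus becomes the construction of one such flexible distribution.

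Next I would argue by induction on $|V(G)|$, the engine being a local extension lemma in the spirit of \cite{DMM}. One locates a bounded-size reducible configuration $R$, applies the induction hypothesis to $G-R$ to obtain a flexible distribution there, and then extends it to $R$ by a randomized completion. The point is that once $G-R$ is colored, each $v\in R$ sees an available list $L'(v)=L(v)\setminus\{\phi(u):u\in N(v)\setminus R\}$, and provided every vertex of $R$ keeps enough free colors relative to its degree inside $R$ --- with a built-in surplus at some vertex to absorb the conditioning coming from the inductive distribution --- a carefully chosen random completion gives every color at every vertex of $R$ probability bounded below by an absolute constant. Choosing $\varepsilon$ small enough to survive the finitely many configuration types then propagates the guarantee $\Pr[\phi(v)=c]\geq\varepsilon$ from $G-R$ to $G$.

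The structural core is a discharging argument establishing that every $\{C_4,C_5\}$-free planar graph contains one of the reducible configurations. I would assign each vertex $v$ the charge $d(v)-4$ and each face $f$ the charge $d(f)-4$, so that Euler's formula yields total charge $-8$. Since $G$ has neither a $4$-cycle nor a $5$-cycle, every face is a $3$-face or a $6^{+}$-face, and the only sources of negative charge are $3$-faces and $3^{-}$-vertices. After designing rules that send charge from $6^{+}$-faces and from high-degree vertices toward $3$-faces and low-degree vertices, a hypothetical graph avoiding every reducible configuration would be forced to have nonnegative total charge, a contradiction. The configurations themselves are the expected low-degree and small-separation structures --- $3^{-}$-vertices, adjacent $3$-vertices, and $3$-vertices sitting on triangles with prescribed neighbor degrees --- each of which the extension lemma must certify as genuinely reducible.

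I expect the main obstacle to be precisely the presence of triangles, which is what separates this setting from the triangle-free (girth $\geq 5$) case treated in \cite{DMM}. Allowing $C_3$ forces the discharging to account for $3$-faces explicitly, and, more seriously, it breaks the independence of available colors used in the girth-$5$ analysis: a $3$-vertex on a triangle has two mutually adjacent neighbors, so the clean ``one extra degree of freedom per vertex'' bookkeeping behind the extension lemma can fail and must be replaced by a finer, case-dependent argument. The real work, I anticipate, will lie in simultaneously (i) isolating a family of triangle-bearing configurations small enough that each is still certifiably reducible, and (ii) tuning the discharging so that this family, together with the low-degree configurations, is unavoidable. Once these two requirements are reconciled, the induction closes and produces the desired absolute constant $\varepsilon$.
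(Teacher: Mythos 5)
Your overall architecture matches the paper's: reduce weighted flexibility to a probability distribution with $\Pr[\phi(v)=c]\geq\varepsilon$ (the paper's Lemma~2, quoted from Dvo\v{r}\'{a}k--Norin--Postle), then induct on $|V(G)|$ by deleting a bounded-size reducible configuration whose unavoidability is proved by discharging. But the induction as you describe it does not close, and the missing piece is the heart of the method. To show $\Pr[\phi(v)=c]\geq\varepsilon$ for a vertex $v$ in the deleted configuration $R$, you must lower-bound the probability that \emph{all} external neighbors of $v$ in $G-R$ simultaneously avoid the color $c$ under the inductive distribution. The marginal guarantee you propagate gives no such joint bound: even with $\Pr[\phi(u_i)=c]$ controlled for each of two far-apart external neighbors $u_1,u_2$, the events $\phi(u_1)=c$ and $\phi(u_2)=c$ can be perfectly anticorrelated, so that one of them always occurs and the extension never has $c$ free at $v$. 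Your ``built-in surplus at some vertex'' only handles the FIX-type extension inside $R$; it does not address this conditioning. The paper fixes it by strengthening the induction hypothesis (its Claim in the proof of Lemma~3): the distribution must satisfy, in addition to (i) the marginal bound, (ii) $\Pr[\phi(u)\neq c \text{ for all } u\in S]\geq\delta^{|S|}$ for every $(P_3+P_4)$-independent set $S$ with $|S|\leq k-2$; correspondingly the reducible configurations must satisfy the twin conditions (\textbf{FIX}) \emph{and} (\textbf{FORB}). This is also exactly where $\{C_4,C_5\}$-freeness enters the probabilistic half of the argument, not only the discharging: two external neighbors of $v\in R$ joined in $G-R$ by a $P_3$ or a $P_4$ would close a $C_4$ or $C_5$ through $v$, so the external neighborhood of $v$ is automatically $(P_3+P_4)$-independent, with size at most $k-2$ forced by (\textbf{FORB}), and property (ii) applies. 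You gesture at an independence problem, but locate it inside the triangle-bearing configuration rather than in the global inductive hypothesis, which is where it actually lives.

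On the structural side your plan has the right shape but underestimates the execution by an order of magnitude: the paper's unavoidable family is not merely ``low-degree vertices and $3$-vertices on triangles'' but requires the notions of bad, vice and dangerous vertices, the $v$-stalk configurations of its Lemma~5 (reducible subgraphs on up to $138$ vertices), and a two-round discharging (rules R0--R6, then R7--R8 routing charge along ``nice paths'' and from $7^+$-faces to poor $(3,4,4)$-faces), with a separate delicate analysis for the poor faces. A minor cosmetic difference: the paper takes charges $d(v)-2$ on vertices and $-2$ on faces (total $-4$) rather than your $d(v)-4$ and $d(f)-4$; both normalizations are workable. Those are matters of execution; the genuine gap is the absent joint-avoidance property (ii) and the (\textbf{FORB}) condition that supports it.
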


In $2007$, Voigt \cite{VM} proved that there exists $\{C_4, C_5\}$-free planar graphs which are not $3$-choosable. Hence, our result is the best possible up to the list size.

The rest of the paper is organized as follows. In order to prove Theorem \ref{thm3}, in Section $2$, we introduce some basic natation and essential tools used in list coloring settings. Using discharging method, we first find some necessary reducible configurations in Section $3$, and then present our discharging rules and final analysis in Section $4$.

\section{Preliminaries}
We use $P_k$ to denote a path of order $k$. Let $H$ be a graph, $S\subseteq V(H)$, $S$ is called a \emph{$(P_3+P_4$)-independent} set if  $H$ contains neither $P_3$ nor $P_4$ connecting two vertices in $S$. Let \emph{$1_{S}$} denote the characteristic function of $S$, i.e., $1_{S}(v)=1$ if $v\in S$ and $1_{S}(v)=0$ otherwise. For functions that assign integers to vertices of $H$, we define addition and subtraction in the natural way, adding$/$subtracting their values at each vertex independently. For a function $f: V(H)\rightarrow \mathbb{Z}$ and a vertex $v\in V(H)$, let $f\downarrow v$ denote the function such that $(f\downarrow v)(w)=f(w)$ for $w\neq v$ and $(f\downarrow v)v=1$.

Let $G$ be a graph and $H$ be an induced subgraph of another graph $G$. For an integer $k\geq3$, let $\delta_{G,k}: V(H)\rightarrow \mathbb{Z}$ be defined by $\delta_{G,k}(v)=k-\deg_{G}(v)$ for each $v\in V(H)$. Then $H$ is said to be a \emph{$(P_3+P_4, k)$-reducible} induced subgraph of $G$ if
\begin{description}
\item[\textbf{(FIX)}] for every $v\in V(H)$, $H$ is $L$-colorable for every $((\deg_{H}+\delta_{G,k})\downarrow v)$-assignment $L$, and
\item[\textbf{(FORB)}] for every $(P_3+P_4, k)$-independent set $S$ in $H$ of size at most $k-2$, $H$ is $L$-colorable for every $(\deg_{H}+\delta_{G,k}-1_{S})$-assignment $L$.
\end{description}

Note that (\textbf{FORB}) implies that $\deg_{H}(v)+\delta_{G,k}(v)\geq2$ for all $v\in V(H)$.
%Before we proceed, let us give an intuition behind these definitions. Consider any assignment $L_0$ of lists of size $k$ to vertices of $G$. The function $\delta_{G,k}$ describes how many more (or few) available colors each vertex has compared to its degree. Suppose we $L_0$ color $G-V(H)$, and let $L'$ be the list assignment for $H$ obtained from $L_0$ by removing from the list of each vertex the colors of its neighbors in $V(G)\setminus V(H)$. In $L'$, each vertex $v\in V(H)$ has at least $\deg_{H}(v)+\delta_{G,k}(v)$ available colors, since each color in $L_0(v)\setminus L'(v)$ correspond to a neighbor of $v$ in $V(G)\setminus V(H)$. Hence, (FIX) requires that $H$ is $L'$-colorable even if we prescribe the color of any single vertex of $H$, and (FORB) requires that $H$ is $L'$-colorable even if we forbid to use one of the colors on the $(P_3+P_4)$-independent set $S$.

To prove weighted $\varepsilon$-flexibility, we use the following observation made by Dvo\v{r}\'{a}k et al.\cite{listre}
\begin{lemma}[\cite{listre}]\label{weighted}
Let $G$ be a graph and let $L$ be a list assignment for $G$. Suppose $G$ is $L$-colorable and there exists a probability distribution on $L$-colorings $\varphi$ of $G$ such that for every $v\in V(G)$ and $c\in L(v)$, $\mathrm{Pr}[\varphi(v)=c] \geq \varepsilon$. Then $G$ with $L$ is weighted $\varepsilon$-flexible.
\end{lemma}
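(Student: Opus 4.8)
The plan is to apply the first-moment (averaging) method directly to the random $L$-coloring $\varphi$ supplied by the hypothesis. Fix an arbitrary weighted request $w$; the goal is to produce a single $L$-coloring $\phi$ with $\sum_{v\in V(G)}w(v,\phi(v))\geq \varepsilon\,w(G,L)$. The natural strategy is to show that the \emph{expected} weight collected by $\varphi$ already meets this bound, and then extract a deterministic coloring by an averaging argument. Note that the support of the distribution is nonempty (indeed $G$ is assumed $L$-colorable), and every coloring in the support is by definition a proper $L$-coloring.

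First I would compute the expectation of the total collected weight using linearity of expectation:
\[
\mathbb{E}\Big[\sum_{v\in V(G)}w(v,\varphi(v))\Big]=\sum_{v\in V(G)}\sum_{c\in L(v)}w(v,c)\,\mathrm{Pr}[\varphi(v)=c].
\]
Because every weight $w(v,c)$ is nonnegative and the hypothesis guarantees $\mathrm{Pr}[\varphi(v)=c]\geq \varepsilon$ for all $v\in V(G)$ and all $c\in L(v)$, each summand is bounded below by $\varepsilon\,w(v,c)$. Summing over all pairs then yields
\[
\mathbb{E}\Big[\sum_{v\in V(G)}w(v,\varphi(v))\Big]\geq \varepsilon\sum_{v\in V(G)}\sum_{c\in L(v)}w(v,c)=\varepsilon\,w(G,L).
\]

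Finally I would invoke the elementary fact that a random variable attains at least its expectation with positive probability: there must exist some $L$-coloring $\phi$ in the support of the distribution with $\sum_{v\in V(G)}w(v,\phi(v))\geq \varepsilon\,w(G,L)$. Since $\phi$ is automatically a proper $L$-coloring (being drawn from a distribution supported on such colorings), it witnesses that $w$ is $\varepsilon$-satisfiable. As $w$ was an arbitrary weighted request, $G$ with $L$ is weighted $\varepsilon$-flexible.

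There is no serious obstacle here: the statement is a clean averaging argument, and the only point requiring a moment's care is observing that the coloring extracted from the support needs no separate feasibility check, since membership in the support already forces it to be a proper $L$-coloring. The genuine difficulty of the paper lies elsewhere, namely in \emph{constructing}, for $\{C_4,C_5\}$-free planar graphs equipped with a $4$-assignment, a probability distribution on $L$-colorings meeting the uniform lower bound $\mathrm{Pr}[\varphi(v)=c]\geq \varepsilon$; this lemma serves purely to convert the existence of such a distribution into the desired weighted flexibility.
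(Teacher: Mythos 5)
Your proof is correct: the first-moment computation via linearity of expectation, combined with the observation that a random variable attains at least its expectation with positive probability, is exactly the standard argument, and you rightly note that any coloring in the support is automatically a proper $L$-coloring. The paper itself states this lemma without proof (citing Dvo\v{r}\'{a}k, Norin and Postle), and your averaging argument is precisely the one given in that reference, so there is nothing to add.
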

Moreover, we use the following well-known result due to Thomassen \cite{Thomasdeg}.
\begin{lemma}[\cite{Thomasdeg}]\label{degree}
Let $G$ be a connected graph and $L$ a list assignment such that $|L(u)| \geq \deg(u)$ for all $u\in V(G)$. If either there exists a vertex $u\in V(G)$ such that $|L(u)|> \deg(u)$, or some $2$-connected component of $G$ is neither complete nor an odd cycle, then $G$ is $L$-colorable.
\end{lemma}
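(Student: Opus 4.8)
The plan is to prove this by induction on $|V(G)|$, producing the coloring directly from whichever of the two hypotheses holds. The engine of the whole argument is a greedy ordering for the \emph{surplus case}, i.e.\ when some vertex $u$ has $|L(u)|>\deg(u)$. Since $G$ is connected, I would fix a spanning tree rooted at $u$ and order the vertices $v_1,\dots,v_n$ so that $v_n=u$ and every $v_i$ with $i<n$ has its tree-parent later in the order. Coloring $v_1,\dots,v_{n-1}$ greedily, each $v_i$ sees at most $\deg(v_i)-1$ previously colored neighbours (its parent is still uncolored), so a color survives in a list of size $\ge\deg(v_i)$; finally $u$ has $\deg(u)<|L(u)|$ colored neighbours, so a color remains for it too. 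Thus the first hypothesis alone suffices, and the entire difficulty is concentrated in the case where $|L(v)|=\deg(v)$ for every $v$ and some block (that is, $2$-connected component) $B$ is neither complete nor an odd cycle.

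Assuming all lists are tight, I would first reduce to the $2$-connected case. If $G$ has a cut vertex $x$, write $G=G_1\cup G_2$ with $V(G_1)\cap V(G_2)=\{x\}$, chosen so that the bad block $B$ lies in $G_2$. Crucially $\deg_{G_i}(x)<\deg_G(x)=|L(x)|$, so $x$ is a surplus vertex on each side, whence each $G_i$ is $L$-colorable by the engine above. To glue the two colorings I would track the set $F_i\subseteq L(x)$ of colors that $x$ can receive in an $L$-coloring of $G_i$, so that any color in $F_1\cap F_2$ yields an $L$-coloring of $G$. A color $c$ fails for side $i$ exactly when deleting $c$ from the lists of the neighbours of $x$ in $G_i$ and removing $x$ leaves a component that is non-colorable; by the induction hypothesis such a component must be a Gallai-type obstruction (every block complete or an odd cycle) with everywhere-tight reduced lists, and counting these forbidden colors against $|L(x)|=\deg_{G_1}(x)+\deg_{G_2}(x)$ should force $F_1\cap F_2\neq\varnothing$.

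It then remains to treat a $2$-connected $G$ with tight lists that is neither complete nor an odd cycle. If $G$ is itself a cycle, it must be even, and even cycles are $2$-choosable, so the tight lists of size $2$ admit a coloring. Otherwise $G$ has a vertex of degree at least $3$ and, being $2$-connected, contains a theta subgraph: two branch vertices joined by three internally disjoint paths. A short parity argument shows that at least one of the three resulting cycles is even. I would use such an even cycle as a seed: color it first, exploiting $2$-choosability, which shrinks the lists of its neighbours and manufactures a surplus vertex in the connected remainder, so the coloring extends by the engine of the first paragraph; the analogous case in which the block decomposes instead into two odd cycles sharing at most one vertex would proceed by coloring one of them so as to free a color at the shared structure.

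The main obstacle is this $2$-connected core together with the gluing count: unlike the surplus case, there is no vertex with slack to launch the greedy argument, so all of the work goes into manufacturing such slack---either by extracting and $2$-coloring an even cycle inside a non-cycle block, or, at a cut vertex, by proving that the feasible-color sets $F_1$ and $F_2$ must overlap. The even-cycle/theta extraction and the verification that the reduced components genuinely fall under the induction hypothesis, rather than silently becoming new Gallai-tree obstructions, are the steps I expect to require the most care.
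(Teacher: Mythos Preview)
The paper does not give its own proof of this lemma: it is quoted verbatim as a known result of Thomassen and simply cited as \cite{Thomasdeg}, so there is no in-paper argument to compare against. Your proposal is therefore an independent proof attempt of a classical theorem, not a reconstruction of something the authors wrote.

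On the substance: your treatment of the surplus case is the standard and correct one, and the cut-vertex reduction is a reasonable outline, though the inclusion--exclusion you gesture at for $F_1\cap F_2\neq\varnothing$ would need to be made precise. The real gap is in your handling of the $2$-connected, everywhere-tight case. Coloring an even cycle $C$ first and then extending to $G-V(C)$ does not work as stated: a vertex $x$ outside $C$ with $k$ neighbours on $C$ has its degree drop by $k$ but its list may also drop by $k$ (all $k$ neighbours can receive distinct colours), so no surplus is created. Worse, $G-V(C)$ with the reduced lists can itself be a Gallai tree with tight lists, and then the induction hypothesis gives you nothing. The classical proofs (Erd\H{o}s--Rubin--Taylor, Borodin) avoid this by instead locating two \emph{non-adjacent} vertices $a,b$ with a common neighbour $w$ such that $G-\{a,b\}$ remains connected; one then either assigns $a$ and $b$ a common colour, producing genuine surplus at $w$, or exploits $L(a)\neq L(b)$ directly. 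Your theta/even-cycle idea does not manufacture slack in the way you need, so as written the argument would fail at exactly the step you flagged as delicate.
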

The following lemma provides an essential technique to deal with the weighted flexibility of graphs.
\begin{lemma}\label{C4P4}
For all integers $k\geq3$, $b\geq1$, there exists a positive constant $\varepsilon$ such that the following holds. Let $G$ be a $\{C_4, C_5\}$-free graph. If for every $Z\subseteq V(G)$, the graph $G[Z]$ contains an induced $(P_3+P_4)$-reducible subgraph with at most $b$ vertices, then $G$ with any assignment of list of size $k$ is weighted $\varepsilon$-flexible.
\end{lemma}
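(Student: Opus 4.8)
The plan is to invoke Lemma~\ref{weighted}: it suffices to exhibit a single probability distribution on $L$-colorings of $G$ under which $\Pr[\varphi(v)=c]\ge\varepsilon$ for every $v\in V(G)$ and every $c\in L(v)$, with $\varepsilon=\varepsilon(b,k)>0$. I would build this distribution by peeling off reducible subgraphs and coloring each piece by an independent uniform choice. Concretely, set $Z_1=V(G)$ and, while $Z_i\neq\emptyset$, use the hypothesis to pick an induced $(P_3+P_4)$-reducible subgraph $H_i$ of $G[Z_i]$ with $|V(H_i)|\le b$, and let $Z_{i+1}=Z_i\setminus V(H_i)$. This gives a partition $V(H_1),\dots,V(H_m)$ of $V(G)$ with $Z_i=V(H_i)\cup\cdots\cup V(H_m)$.

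The coloring is generated in the \emph{reverse} order $H_m,H_{m-1},\dots,H_1$. Having colored $H_{i+1},\dots,H_m$, let $L_i(u)$ be the list of $u\in V(H_i)$ with the colors of its already-colored neighbors deleted, and color $H_i$ uniformly at random among all proper $L_i$-colorings. The reason for this order is that the already-colored neighbors of $u\in V(H_i)$ are exactly $N(u)\cap Z_{i+1}$, whence, regardless of the history,
\begin{equation*}
|L_i(u)|\ \ge\ k-|N(u)\cap Z_{i+1}|\ =\ \deg_{H_i}(u)+k-\deg_{G[Z_i]}(u)\ =\ \bigl(\deg_{H_i}+\delta_{G[Z_i],k}\bigr)(u).
\end{equation*}
Thus $L_i$ is always a $(\deg_{H_i}+\delta_{G[Z_i],k})$-assignment; taking $S=\emptyset$ in \textbf{(FORB)} shows $H_i$ is $L_i$-colorable, so the random coloring is well defined and $G$ is $L$-colorable.

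The crucial quantitative point is that, when its block $H_j$ is colored, the target vertex $v$ has at most $k-2$ already-colored neighbors. Indeed, the remark that $\deg_{H_j}(v)+\delta_{G[Z_j],k}(v)\ge2$ rewrites as $|N(v)\cap Z_{j+1}|=\deg_{G[Z_j]}(v)-\deg_{H_j}(v)\le k-2$. Hence the colored neighbors of $v$ occupy at most $k-2$ of the blocks $H_{j+1},\dots,H_m$, and in each such block $H_i$ the set $S_i=N(v)\cap V(H_i)$ satisfies $|S_i|\le k-2$. Moreover $S_i$ is a $(P_3+P_4)$-independent set of $H_i$: a $P_3$ (resp.\ $P_4$) inside $H_i$ joining two vertices of $S_i$ would close up through $v$ to a copy of $C_4$ (resp.\ $C_5$) in $G$, which is forbidden. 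This is exactly where $\{C_4,C_5\}$-freeness is used.

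Finally I would lower-bound $\Pr[\varphi(v)=c]$ through one favorable event $E$: for each block $H_i$ $(i>j)$ meeting $N(v)$, its coloring avoids $c$ on all of $S_i$, and the coloring of $H_j$ assigns $c$ to $v$. By \textbf{(FORB)} with $S=S_i$ (applicable since $|S_i|\le k-2$ and $S_i$ is $(P_3+P_4)$-independent) there is at least one proper $L_i$-coloring avoiding $c$ on $S_i$; on $E$ the color $c$ then survives in $L_j(v)$, so \textbf{(FIX)} at $v$ yields at least one proper $L_j$-coloring with $\varphi(v)=c$. Since $|V(H_i)|\le b$, every prescribed block-event has conditional probability at least $k^{-b}$, and at most $k-1$ blocks carry a constraint (the $\le k-2$ neighbor-blocks together with $H_j$). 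Multiplying these conditional probabilities gives $\Pr[\varphi(v)=c]\ge\Pr[E]\ge k^{-b(k-1)}$, so $\varepsilon=k^{-b(k-1)}$ works. The main obstacle is precisely keeping the target color $c$ available at $v$ with constant probability; everything hinges on the degree bound $|N(v)\cap Z_{j+1}|\le k-2$ extracted from \textbf{(FORB)}, which simultaneously caps the number of constrained blocks and the sizes of the forbidden sets below $k-1$, so that the product of constant factors does not decay with $\deg(v)$.
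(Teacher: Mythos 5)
Your proposal is correct and is essentially the paper's own proof in unrolled form: the paper peels off one reducible subgraph $Y$ and recurses (maintaining, as an inductive invariant, the avoidance probability $\delta^{|S|}$ for small $(P_3+P_4)$-independent sets), while you peel the whole partition $H_1,\dots,H_m$ at once and obtain the same per-block conditional bounds $k^{-b}$ via \textbf{(FIX)}/\textbf{(FORB)} and the chain rule, checking the $(P_3+P_4)$-independence of each $S_i=N(v)\cap V(H_i)$ directly from $\{C_4,C_5\}$-freeness exactly as the paper does for the set of outside-neighbors of $v$. Your constant $\varepsilon=k^{-b(k-1)}$ is in fact the honest outcome of the paper's own final estimate $(\frac{1}{k})^{b}\delta^{|S|}$ with $\delta=(\frac{1}{k})^{b}$ and $|S|\le k-2$ (the paper's displayed $\varepsilon=(\frac{1}{k})^{b+k-2}$ appears to be a typo), so the two arguments coincide in substance.
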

\begin{proof}
Before proving the Lemma, we give the following claim at first.
\begin{claim}\label{claim}
For every integer $k\geq3$, there exist $\varepsilon$, $\delta>0$ as follows. Let $G$ be a $\{C_4, C_5\}$-free graph and $L$ be an assignment of lists of size $k$ to vertices of $G$. Then there exists a probability distribution on $L$-colorings $\phi$ of $G$ such that
\begin{description}
\item[(i)] for all $v\in V(G)$ and $c\in L(v)$, we have $\mathrm{Pr}[\phi(v)=c]\geq \varepsilon$, and
\item[(ii)] for any $P_3+P_4$-independent subset $S$ with $|S|\leq k-2$, $\mathrm{Pr}[\phi(v)\neq c \, \text{for all $v\in S$}]\geq \delta^{|S|}$.
\end{description}
\end{claim}
\begin{proof}%[{\textbf{Proof of Claim \ref{claim}}}]
Let $\delta=(\frac{1}{k})^{b}$, $\varepsilon=(\frac{1}{k})^{b+k-2}$. Assume $G$ is a $\{C_4, C_5\}$-free graph, $Z\subseteq V(G)$, there exists $Y\subseteq Z$ of size at most $b$ such that $G[Y]\subseteq G[Z]$ and $G[Y]$ is $(P_3+P_4, k)$-reducible. We prove the claim by induction on $|V(G)|$. A random $L$-coloring $\phi$ of $G$ is chosen as follows: we choose an $L$-coloring $\phi_1$ of $G-Y$ at random from the probability distribution obtained by the induction hypothesis. Let $L'$ be the list assignment for $G[Y]$ defined by
\begin{equation*}
L'(v)=L(v)\backslash\{\phi_1(u):uv\in E(G), u\notin Y\}
\end{equation*}
for all $v\in Y$. Note that $|L'(v)|\geq \deg_{G[Y]}(v)+\delta_{G,k}(v)$ for all $v\in Y$, and thus $G[Y]$ has an $L'$-coloring by (\textbf{FORB}) applied with $S=\emptyset$. We choose an $L'$-coloring $\phi_0$ uniformly at random among all $L'$-colorings of $G[Y]$, and let $\phi$ be the union of the colorings $\phi_1$ and $\phi_0$. Note that $|L'(v)|\geq \deg_{G[Y]}(v)+\delta_{G,k}(v)$ for all $v\in Y$.

Firstly, we prove that (ii) holds. Let $S$ be a $(P_3+P_4)$-independent subset in $G$, $S=S_1\cup S_2$, where $S_1=S\cap (G-Y)$, $S_2=S\cap Y$. Obviously, both $S_1$ and $S_2$ are $P_3+P_4$-independent subsets in $G-Y$ and $Y$, respectively. By the induction hypothesis for $G-Y$, we obtain $\mathrm{Pr}[(\forall v\in S_1)\phi(v)\neq c]=\mathrm{Pr}[(\forall v\in S_1)\phi_1(v)\neq c]\geq \delta^{|S_1|}$. If $S_1=S$, then (ii) holds. Therefore, suppose that $|S_1|\leq |S|-1$. Fix $\phi_1$, then consider the probability that $\phi_0$ gives all vertices of $S_2$ colors different from c. For $v\in S_2$, let $L_c(v)=L'(v)\backslash \{c\}$, and for $v\in Y\backslash S_2$, let $L_c(v)=L'(v)$. Note that $|L_c(v)|\geq \deg_{G[Y]}(v)+\delta_{G,k}(v)-1_S(v)$ for all $v\in Y$. By (\textbf{FORB}), there exists an $L_c$-coloring of $G[Y]$ in that no vertex of $S_2$ is assigned color c. Since $\phi_0$ is chosen uniformly among the at most $k^{b}$ $L'$-colorings of $G[Y]$, we conclude that the probability that no vertex of $S_2$ is assigned color c by $\phi_0$ is at least $(\frac{1}{k})^{b}=\delta$. Consequently, under the assumption that $\phi_1$ does not assign color c to any vertex of $S_1$, the probability that $\phi_0$ does not assign color c to any vertex of $S_2$ is at least $\delta$. Hence,
\begin{align*}
&\mathrm{Pr}[\phi(v)\neq c,~\forall~v\in S]\\=&\mathrm{Pr}[\phi_1(u)\neq c,~\forall~u \in S_1; \phi_0(w)\neq c,~\forall~w\in S_2] \\
       \geq &(\frac{1}{k})^{b}\delta^{|S_1|}\\ \geq &\delta^{|S_1|+1}\\ \geq &\delta^{|S|}.
\end{align*}
as required.

Next, we prove that (i) holds. Consider any vertex $v\in V(G)$ and a color $c\in L(v)$. If $v\in V(G-Y)$, then $\mathrm{Pr}[\phi(v)=c]=\mathrm{Pr}[\phi_1(v)=c]\geq \varepsilon$ by the induction hypothesis for $G-Y$. Therefore, we assume that $v\in Y$. Let $S$ be the set of neighbors of $v$ in $V(G)\backslash Y$. Since $G$ is $\{C_4, C_5\}$-free and all vertices in $S$ has a common neighbor, $S$ is $P_3+P_4$-independent in $G-Y$. Furthermore, (\textbf{FORB}) implies $1\leq \deg_{G[Y]}(v)+\delta_{G,k}(v)-1_{v}(v)=\deg_{G[Y]}(v)+k-\deg_G(v)-1=k-1-|S|$, thus $|S|\leq k-2$, By (ii) we obtain that no vertex of $S$ is assigned color c by $\phi_1$ with probability at least $\delta^{k-2}$. (\textbf{FIX}) implies that there exists an $L'$-coloring of $G[Y]$ in which $v$ is assigned color $c$. Since $\phi_0$ is chosen uniformly among all $L'$-colorings of $G[Y]$, it follows that under the assumption that no vertex of $S$ is colored by c, the probability that $\phi(v)=c$ is at least $\delta$. Therefore,
\begin{align*}
&\mathrm{Pr}[\phi(v)=c~\forall~v\in V(G)]\\=&\mathrm{Pr}[\phi_1(w)\neq c\ for~\forall~w \in S; \phi_0(u)=c~\forall~u\in Y] \\
\geq &(\frac{1}{k})^{b}\delta^{|S|}\\ \geq &(\frac{1}{k})^{b+k-2}.
\end{align*}
\medskip
This completes the proof of Claim \ref{claim}.
\end{proof}
Combining Claim \ref{claim} with Lemma \ref{weighted}, we obtain that Lemma \ref{C4P4} holds.
\end{proof}

Recall that a \emph{block} of $H$ is a maximal connected subgraph without a cutvertex. %, we call $G$ is $L\downarrow v$-\emph{colorable} if the coloring of $v$ can be extended to a list coloring of $G$.
The following lemma is easy to obtain by induction on the number of blocks in $H$. So we omit the proof.% by the definition of $L\downarrow v$-colorable.
\begin{lemma}\label{fix}
Let $L$ be an $f$-assignment for $H$. If every block $B$ of $H$ is $L'$-colorable for any $f\downarrow v$-assignment $L'$ with any fixed vertex $v\in V(B)$, then $H$ satisfies (\textbf{FIX}).
\end{lemma}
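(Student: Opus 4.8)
My plan is to induct on the number of blocks of $H$. It suffices to verify (\textbf{FIX}) directly: for an arbitrary vertex $v_0 \in V(H)$ and an arbitrary $(f\downarrow v_0)$-assignment $L$, I must produce an $L$-coloring of $H$. In the base case $H$ is a single block, and applying the hypothesis with $v_0$ as the fixed vertex gives the desired $L$-coloring at once.

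For the inductive step I would peel off a leaf block. The essential point, and the step I expect to require the most care, is the choice of which leaf block to remove: since the hypothesis only guarantees a coloring of a block once a single vertex has been fixed, I must make sure that the prescribed vertex $v_0$ does not lie in the interior of the block that I reattach last. Concretely, I claim one can always pick a leaf block $B$ of the block--cut tree, with unique cut vertex $c$, such that $v_0 \notin V(B)\setminus\{c\}$. This follows because the block--cut tree of a graph with at least two blocks has at least two leaves, all of which are blocks whose interiors contain only non-cut vertices; hence a cut vertex $v_0$ lies in no such interior, and a non-cut vertex $v_0$ lies in exactly one block, so a different leaf block works.

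Given such a $B$, I set $H' = H - (V(B)\setminus\{c\})$. Then $H'$ has one fewer block, and because every block of $H'$ is again a block of $H$, the hypothesis of the lemma is inherited by $H'$. As $v_0 \in V(H')$, the restriction $L|_{H'}$ is still an $(f\downarrow v_0)$-assignment, so by the inductive hypothesis applied to $H'$ it admits an $L$-coloring $\phi'$. To finish I extend $\phi'$ over $B$: define $L_B$ on $V(B)$ by $L_B(c)=\{\phi'(c)\}$ and $L_B(w)=L(w)$ for $w\in V(B)\setminus\{c\}$. Because $v_0\notin V(B)\setminus\{c\}$, every such $w$ satisfies $|L_B(w)|\geq f(w)$, so $L_B$ is an $(f\downarrow c)$-assignment for $B$; the hypothesis (with $c$ as the fixed vertex) then yields an $L_B$-coloring $\phi_B$ with $\phi_B(c)=\phi'(c)$.

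It remains to observe that $\phi'\cup\phi_B$ is a well-defined proper $L$-coloring of $H$: the two colorings agree at their only common vertex $c$, and since each vertex of $V(B)\setminus\{c\}$ is a non-cut vertex of $H$, all of its incident edges lie inside $B$ and are handled by $\phi_B$, while all remaining edges lie in $H'$ and are handled by $\phi'$. Thus (\textbf{FIX}) holds for $H$, completing the induction. As indicated, the one genuinely non-routine ingredient is the leaf-block selection that confines $v_0$ to the already-colored part; once it is in place, the gluing of the block colorings along the block--cut tree is mechanical.
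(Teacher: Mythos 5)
Your proof is correct and follows exactly the route the paper has in mind---induction on the number of blocks, peeling off a leaf block of the block--cut tree whose interior avoids the prescribed vertex $v_0$ and recoloring it with its cut vertex fixed---which the paper declares easy and omits. The only point worth adding is that your leaf-block selection implicitly assumes $H$ is connected, while the paper does apply (\textbf{FIX}) to disconnected subgraphs (e.g.\ the pairwise disjoint $v$-stalks, which span several components); this is harmless, since for a component consisting of a single block with no cut vertex $c$ you simply delete that block whole and color it by the hypothesis with an arbitrarily chosen fixed vertex, its list being nonempty because $f=\deg_H+\delta_{G,k}\geq 2$ in all of the paper's applications.
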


\section{Reducible subgraphs}
\medskip

Before proceeding, we introduce the following notation. A vertex $v$ with $4\leq d(v)\leq12$ is \emph{bad} if (i) the number of $(3,4^-,v)$-face is $\lfloor\frac{d(v)-2}{2}\rfloor$; (ii) $f_3(v)=\lfloor\frac{d(v)}{2}\rfloor$. A vertex $v$ with $d(v)=4$ is \emph{vice} if $f_3(v)=2$.
And a vertex $v$ with $5\leq d(v)\leq12$ is \emph{dangerous} if $f_{3,3}(v)=\lfloor\frac{d(v)-3}{2}\rfloor$. On the other hand, each $(3, 4, 4)$-face is called \emph{poor}, each $(3,4,v)$-face is called \emph{worse} and each $(3,3,v)$-face is called \emph{worst}. Furthermore, denote by $f_{3,3}(v)$ (or $f_{3,4}(v)$), $f_{3b}(v)$ (or $f_{4b}(v)$) the number of worst $3$-face (or worse $3$-face incident with $v$), the number of $(3,w,v)$-face (or $(4,w,v)$-face) in which $w$ is a bad vertex, respectively. In particular, let $f_{bb}(v)$ be the number of $(v, w_1, w_2)$-face in which both $w_1$ and $w_2$ are bad vertices. For each $6^+$-face $f$, denote by $\xi(f)$ the number of poor $3$-face sharing an edge with $f$. A \emph{nice} path connecting $v$ and some vertex $u\in V(f)$ is a path of length at most two such that either
\begin{description}
\item[(i)] $d(u)=3$ and all internal vertices have degree $3$ in $G'$; or
\item[(ii)] $d(u)=4$ and all internal vertices are vice $4$-vertices, moreover, every two consecutive $4$-vertices in the path are contained in a $(4,4,4)$-face.
\end{description}

Note that in all figures of the paper, any vertex marked with $\bullet$ has no edges of $G$ incident with it other than those shown, any vertex marked with $\blacksquare$ is bad unless stated otherwise.
%\begin{figure}[H]
%\begin{center}
%\includegraphics[scale=0.7]{4total00-eps-converted-to.pdf}\\
%{Figure 1. Reducible configurations}
%\end{center}
%\label{sub}
%\end{figure}

Here, we describe a quite general case of $(P_3+P_4, 4)$-reducible configurations. Let $G$ be a $\{C_4, C_5\}$-free planar graph and $v$ a vertex of $G$. A \emph{$v$-stalk} is one of the following subgraphs:
\begin{itemize}
\item[(a)] An edge $vu_1$;
\item[(b)] A path $vu_1u_2$;
\item[(c)] A cycle $vu_1u_2$;
\item[(d)] A cycle $vu_1v_1$;
\item[(e)] A path $vu_1u_2$ and a path $vv_1u_1$;
\item[(f)] A cycle $vu_1w_1$;
\item[(g)] A path $vu_1u_2$ and a path $vw_1u_1$;
\item[(h)] A cycle $vw_1w_2$;
\item[(i)] A cycle $vv_1w_1$;
\item[(j)] A path $vv_1v_2v_3$, a path $vw_1v_1$ and a path $v_2u_1v_3$ ($v_1$, $v_2$ may be the same vertex);
\item[(k)] A path $vv_1v_3v_4$, a path $vv_2v_1$ and a path $v_3u_1v_4$ ($v_1$, $v_3$ may be the same vertex);
\item[(l)] A path $vv_1v_3v_4$, a path $v_3u_1v_4$, a path $vv_2v_5v_6$, a path $v_5u_2v_6$ and an edge $v_1v_2$ ($v_1$, $v_3$ may be the same vertex, $v_2$, $v_5$ may be the same vertex);
\item[(m)] A path $vv_1v_2v_3$ and a path $v_2u_1v_3$ ($v_1$, $v_2$ may be the same vertex),

%\item[(1)] A path $vv_1$, i.e. $\mathrm{(a)}$;
%\item[(2)] A cycle $vv_1v_2$, i.e. $\mathrm{(b)}, \mathrm{(c)}, \mathrm{(d)}, \mathrm{(e)}, \mathrm{(f)}$;
%\item[(3)] A path $vv_1v_3v_4$, a path $v_3v_5v_4$, a path $vv_2v'_3v'_4$ and a path $v'_3v'_5v'_4$, i.e. $\mathrm{(g)}$;
%\item[(4)] A path $vv_1v_3v_4$, a path $v_3v_5v_4$ and a path $vv_2v_1$, i.e. $\mathrm{(h)}, \mathrm{(i)}$;
%\item[(5)] A path $vv_1v_3v_4$ and a path $v_3v_5v_4$, i.e. $\mathrm{(j)}$,
\end{itemize}
where $d(u_i)=3$, $d(v_i)=4$, $w_i$ is bad and $5\leq d(w_i)\leq12$ for each $i\in \{1, 2, \ldots, 6\}$. It is worth noting that for each bad vertex $w_i$, the $v$-stalk includes all neighbors of $w_i$ lying on the worse or worst $3$-faces.

\begin{lemma}\label{CP}
Assume that $G$ is a $\{C_4, C_5\}$-free planar graph and $v\in V(G)$ with $5\leq d(v) \leq12$. Let $A=\{\mathrm{(a)},\mathrm{(b)},\mathrm{(c)},\mathrm{(d)},\mathrm{(e)}, \mathrm{(f)}, \mathrm{(g)}, \mathrm{(h)}, \mathrm{(j)}, \mathrm{(l)} \}$, $B=\{\mathrm{(a)}, \mathrm{(b)}, \mathrm{(c), \mathrm{(d)}, \mathrm{(e)}, \mathrm{(l)}}\}$, $C=\{\mathrm{(b)}, \mathrm{(c)}, \mathrm{(e)}, \mathrm{(l)}\}$, $D=\{\mathrm{(a)}, \mathrm{(b)}, \mathrm{(c)}, \mathrm{(e)}, \mathrm{(f)}, \mathrm{(g)}, \mathrm{(h)}, \mathrm{(j)}, \mathrm{(l)}\}$ and $K=\{\mathrm{(i)}, \mathrm{(k)}\}$  be the sets of stalks of $v$. If there exists a induced subgraph $H$ containing $v$ such that one of the following holds:
\begin{description}
\item[(1)] $V(H)$ either consists of the vertices lying on any combination of elements in $A$, or any combination of elements in $B$ together with one copy of $\mathrm{(i)}$, or any combination of elements in $C$ together with one copy of $\mathrm{(k)}$, or any combination of elements in $D$ together with one copy of $\mathrm{(m)}$ such that the resulting $(\deg_{H}+\delta_{G,4})$-assignment $L$ satisfies $|L(v)|\geq3$.
\item[(2)] $V(H)$ consists of the vertices lying on any combination of elements in $A$ together with only one element in $K$, except the special case where $\mathrm{(d)}$ and $\mathrm{(k)}$ appear in the same combination, such that the resulting $(\deg_{H}+\delta_{G,4})$-assignment $L$ satisfies $|L(v)|\geq4$.
\end{description}
Then $G$ contains a $(P_3+P_4, 4)$-reducible induced subgraph with at most $138$ vertices.
\end{lemma}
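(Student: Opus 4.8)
The plan is to show that the subgraph $H$ itself (which has at most $138$ vertices, by the degree bounds recorded below) is the required $(P_3+P_4,4)$-reducible subgraph, by verifying the two defining conditions (\textbf{FIX}) and (\textbf{FORB}) for the target function $f:=\deg_H+\delta_{G,4}$. The basic bookkeeping is that $f(x)=4-m_x$, where $m_x$ is the number of neighbours of $x$ in $G$ lying outside $H$; thus $f(v)=|L(v)|\in\{3,4\}$ by hypothesis, every degree-$3$ vertex $u_i$ has $f(u_i)=\deg_H(u_i)+1$, every degree-$4$ vertex $v_i$ has $f(v_i)=\deg_H(v_i)$, and for each bad vertex $w_i$ the inclusion of all its neighbours on incident worse/worst $3$-faces forces $\deg_H(w_i)$ to be large enough that $f(w_i)\ge2$. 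For the vertex count, $5\le d(v)\le12$ means $v$ meets at most twelve stalks; each stalk contributes a bounded number of core vertices and each bad vertex (of degree at most $12$) contributes only its face-neighbours, so summing these bounds yields $|V(H)|\le138$.

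For (\textbf{FIX}) I would invoke Lemma \ref{fix} to reduce to a single block $B$ of $H$ with one fixed vertex. Because $G$ is $\{C_4,C_5\}$-free and each stalk is assembled from edges, short paths and triangles, every block of $H$ is either a single edge or a triangle. Single edges are immediate. A triangle is at once a clique and an odd cycle, so fixing one of its vertices is harmless only when the remaining two carry enough colours; concretely one must check that every triangle of $H$ contains at least two vertices with $f\ge3$. I would verify this stalk by stalk: a triangle through $v$ has $f(v)\ge3$ and a second surplus from any degree-$3$ vertex of the triangle (which has $\deg_H\ge2$, hence $f\ge3$), whereas the worst faces $(3,3,w_i)$ and worse faces $(3,4,w_i)$ absorbed by a bad vertex $w_i$ supply the required surplus through their incident degree-$3$ vertices together with the forced $H$-degree of $w_i$. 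This is exactly the role played by the bad/vice/dangerous definitions and by the rule that a $v$-stalk swallows all worse/worst neighbours of each $w_i$.

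For (\textbf{FORB}) — the heart of the argument — I must show that $H$ is $g$-colourable for $g:=f-1_S$ whenever $S$ is a $(P_3+P_4)$-independent set with $|S|\le2$. The strategy is to find a vertex ordering along which a greedy colouring succeeds, treating the at most two reduced vertices specially. The $(P_3+P_4)$-independence is the key structural input: it guarantees that the two members of $S$ share no common neighbour and are joined by no path of length two or three, so their colour deficiencies cannot accumulate on a single still-uncoloured vertex; this lets me colour the two local neighbourhoods of $S$ essentially independently and then complete the colouring greedily from the pendant vertices inward. The genuinely tight spots are a degree-$3$ pendant vertex of $S$ (where $g=1$) and a degree-$4$ vertex of $S$ sitting in a triangle, and the colouring must be arranged so that $v$ and the triangle-vertices are coloured first, ahead of the low-degree periphery.

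The main obstacle is the breadth of the case analysis: one must run through every admissible combination of stalks from $A,B,C,D,K$ together with every placement of the $\le2$ vertices of $S$, and confirm colourability in each. I would organise it by first disposing of the mild stalks in $A,B,C,D$, where the reverse-greedy ordering works verbatim under $|L(v)|\ge3$, and then treating the genuinely tight configurations — the triangles through $v$, the $K$-stalks $\mathrm{(i)}$ and $\mathrm{(k)}$, and the bad-vertex gadgets. The reason case (2) demands the stronger bound $|L(v)|\ge4$ is precisely that a $K$-stalk introduces one extra unit of tightness that only a spare colour at $v$ can absorb; and the sole combination for which even this spare colour is insufficient, namely $\mathrm{(d)}$ together with $\mathrm{(k)}$, is exactly the case excluded from the hypothesis.
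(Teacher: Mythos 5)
Your proposal is correct and follows essentially the same route as the paper: it takes $H$ to be the union of the stalks, establishes (\textbf{FIX}) through Lemma \ref{fix} on the edge/triangle blocks (your observation that each triangle needs two vertices with list size at least $3$, supplied by the degree-$3$ vertices and the swallowed worse/worst neighbours of each bad vertex, is exactly what makes the paper's appeal to Lemma \ref{fix} go through), and establishes (\textbf{FORB}) by an ordered greedy colouring that treats the at most two vertices of $S$ and the vertex $v$ first, using $(P_3+P_4)$-independence to keep their deficiencies from accumulating. The only difference is organizational -- the paper grows the admissible set incrementally from $A_0=\{\mathrm{(a)},\mathrm{(b)},\mathrm{(c)},\mathrm{(d)}\}$ while you split into mild versus tight stalks -- and both arguments leave the exhaustive combination-by-combination check at a comparable level of detail, including the same explanation of why case (2) needs $|L(v)|\geq 4$ and why $\mathrm{(d)}$ with $\mathrm{(k)}$ must be excluded.
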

\begin{proof}
%For convenience we repeat the relevant definition of $k$-sum from \cite{Bon}. Let $G_1$ and $G_2$ be two graphs whose intersection $G_1\cap G_2$ is a complete graph on $k$ vertices. The graph obtained from their union $G_1\cup G_2$ by deleting the edges of $G_1\cap G_2$ is called the $k$-sum of $G_1$ and $G_2$.
For $x\in N(v)$, let $T_x$ be a $v$-stalk witnessing this case. Let $H$ be the subgraph of $G$ induced by $\cup_{x\in N(v)}T_x$. Clearly, $|V(H)|\leq (2d(w)-1)\times \lfloor\frac{d(v)}{2}\rfloor \leq138$. Thus it suffices to show that $H$ is $(P_3+P_4, 4)$-reducible.

By $\{C_4, C_5\}$-freeness, it is easy to obtain that all stalks are pairwise vertex disjoint and there are no edges among them. Consider any vertex $z\in V(H)$ and a $(\deg_{H}+\delta_{G,4})\downarrow z$-assignment $L$ for $H$, $H$ is $L$-colorable by Lemma \ref{fix}, implying (\textbf{FIX}). Thus it only remains to show that $H$ satisfies (\textbf{FORB}). Let $S=\{s_1, s_2\}$ be a $(P_3+P_4)$-independent subset in $G$ and $L'$ a $(\deg_H+\delta-1_S)$-assignment for $H$. First, we performing the following operations:
\begin{description}
\item[$\bullet$] Arbitrarily choose any two $v$-stalks $T_1$ and $T_2$ which containing at least a bad vertex or a nice path randomly;
\item[$\bullet$] Arbitrarily choose a vertex $s_i$ in each $T_i$ for $i\in \{1,2\}$ with $dist(s_1, s_2)=1$ or $dist(s_1, s_2)\geq4$.
\end{description}

When $|L(v)|\geq3$, if $H$ is induced by the vertices of any combination of elements in $A$, let $A_0=\{\mathrm{(a)}, \mathrm{(b)}, \mathrm{(c)}, \mathrm{(d)}\}$. Obviously, the result holds for $A_0$. Next, we add $\mathrm{(e)}$, $\mathrm{(f)}$, $\mathrm{(g)}$, $\mathrm{(h)}$, $\mathrm{(j)}$, $\mathrm{(l)}$ into $A_0$ in turn to obtain $A$ and prove that the result also admits for $A$. Let $A_1=A_0 \cup \{\mathrm{(e)}\}$, we only need to consider whether (\textbf{FORB}) holds for the combination $\mathrm{(b)}$ and $\mathrm{(e)}$. In this situation, we can first $L'$-color $s_1, s_2, v$ in order and then greedily $L'$-color the remaining vertices. Hence, $A_1$ admits the result. Let $A_2=A_1 \cup \{\mathrm{(f)}\}$, we perform the above operation to define $S$, after that we $L'$-color the blocks which containing $s_1, s_2, v$ respectively, this is possible since $|L'|\geq3$, and then greedily $L'$-color the remaining vertices. By the same argument, we eventually find that $A$ admits the result.

If $H$ is induced by the vertices of any combination of elements in $B$ as well as only one copy of $\mathrm{(i)}$, then we first greedily $L'$-color the vertices around the bad vertex $w_1$ in $\mathrm{(i)}$ and then greedily $L'$-color the remaining vertices.

Similarly, if $H$ is induced by the vertices of any combination of elements in $C$ as well as only one copy of $\mathrm{(k)}$ or by the vertices of any combinations of elements in $D$ as well as only one copy of $\mathrm{(m)}$, it is easy to verify the result also holds.

When $|L(v)|\geq4$ and $H$ is induced by the vertices of any combination of elements in $A$ together with only one element in $K$. As usual, we perform the above operations. Then we first $L'$-color the vertices in the block which containing $s_i$, then greedily $L'$-color the remaining vertices. It is possible since $|L'(v)|\geq4$. Hence, in all the cases, (\textbf{FORB}) holds.

In conclusion, $H$ is $(P_3+P_4, 4)$-reducible.
\end{proof}
\begin{lemma}\label{4redu60}
Let $G$ be a $\{C_4,C_5\}$-free planar graph. If $G$ contains one of the following configurations (see Figure $1$). Here, $d(u_i)=3$, $d(v_i)=4$ and $5\leq d(w_i)\leq12$ for all $i$.
\begin{description}
\item[(1)] A $4$-vertex $v$ with $f_{bb}(v)=1$;
\item[(2)] A $6$-vertex $v$ with $f_{3,3}(v)=1, f_{3b}=1$;
\item[(3)] A $2t$-vertex $v$ with $f_{3,3}(v)=t-1$, where $2\leq t \leq6$;
\item[(4)] A $2t$-vertex $v$ with $f_{3,3}(v)=t-2, f_{bb}(v)=1$, where $2\leq t \leq6$;
\item[(5)] A $5$-vertex $v$ with $f_{3b}=1$, and a path $vu_2u_3$;
\item[(6)] A $6$-vertex $v$ with $f_{3b}=1$, and two paths $vu_2u_3$, $vu_4u_5$,
\end{description}
Then $G$ contains a $(P_3+P_4, 4)$-reducible induced subgraph with at most $29$ vertices.
\end{lemma}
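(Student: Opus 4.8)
The plan is to treat the six configurations uniformly, and the observation that organizes everything is that in each of them the central vertex $v$ has list size exactly two. Writing $|L(v)|=\deg_H(v)+\delta_{G,4}(v)=\deg_H(v)+4-d(v)$ and noting that each worst, worse, or bad face incident to $v$ consumes two edges at $v$ while each attached path consumes one, a short count gives $\deg_H(v)+4-d(v)=2$ in all of $(1)$--$(6)$. This is precisely why these configurations sit just below the hypothesis $|L(v)|\geq 3$ of Lemma \ref{CP} and must be handled separately. For each configuration I would build an explicit induced subgraph $H$ consisting of $v$, the named faces and paths, and --- forced by the requirement $\deg_H(z)+\delta_{G,4}(z)\geq 2$ for every $z\in V(H)$ --- the entire relevant neighborhood of each incident bad vertex $w$: since $\delta_{G,4}(w)=4-d(w)\leq -1$, the subgraph $H$ must retain all but at most two edges at $w$, i.e. every neighbor of $w$ lying on a worse or worst $3$-face, exactly as prescribed for the $v$-stalks before Lemma \ref{CP}. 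Because each bad vertex has degree at most $12$ and contributes at most about a dozen such neighbors, and each configuration carries at most two bad vertices together with a bounded number of $3$- and $4$-vertices, one gets $|V(H)|\leq 29$; the $\{C_4,C_5\}$-freeness keeps the pieces pairwise vertex-disjoint with no unexpected edges, which both secures the count and gives $H$ a clean block structure.

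With $H$ fixed, property (\textbf{FIX}) is immediate from Lemma \ref{fix}: every block is an edge, a triangle, or a short path, each $L'$-colorable once one list is pinned to size one, by Lemma \ref{degree}. The real work is (\textbf{FORB}), where I would play the slack carried by the degree-$3$ vertices, for which $\delta_{G,4}(u)=1$, against the tightness at $v$ and at the bad vertices. Given a $(P_3+P_4)$-independent set $S=\{s_1,s_2\}$ with $|S|\leq 2$ and a $(\deg_H+\delta_{G,4}-1_S)$-assignment $L'$, I would color in a fixed order: first the blocks containing $s_1$ and $s_2$ --- possible because $(P_3+P_4)$-independence forces $\mathrm{dist}(s_1,s_2)=1$ or $\mathrm{dist}(s_1,s_2)\geq 4$, so the two unit list-reductions land in at most two distinct blocks and can be absorbed; then the blocks around each bad vertex $w$, whose full neighborhood lies in $H$ so that $|L'(w)|\geq 3$ while its incident vertices are low-degree and plentiful; then the central $v$ with its two surviving colors; and finally the degree-$3$ vertices greedily, each of which still sees a free color by its slack. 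The $\{C_4,C_5\}$-freeness is used throughout to guarantee that distinct triangles and paths meet only at $v$, so the greedy stages never collide.

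For the five configurations containing a bad vertex --- $(1)$, $(2)$, $(4)$, $(5)$, $(6)$ --- there is a convenient shortcut to this direct argument: re-root at a bad vertex $w_1$, whose degree is at least five, and observe that by retaining all of $w_1$'s edges its list can be pushed to $|L(w_1)|=4$. The triangle through $v$ is then stalk $\mathrm{(h)}$ or $\mathrm{(i)}$ from $w_1$'s viewpoint, the remaining faces at $w_1$ are stalks lying in $A$, and --- avoiding the excluded pairing of $\mathrm{(d)}$ with $\mathrm{(k)}$ --- part $(2)$ of Lemma \ref{CP} applies directly. Configuration $(3)$ has no bad vertex, so it admits no such re-rooting and must be verified purely by the coloring order above; it is the cleanest test case for the method.

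The step I expect to be the main obstacle is controlling the size-two list at $v$ throughout (\textbf{FORB}). Because $v$ has no slack of its own, the coloring order must spend the degree-$3$ vertices' slack and the bad vertices' four colors in precisely the right sequence, so that by the time $v$ is reached its two colors are not both blocked by already-colored neighbors; the $(P_3+P_4)$-independence of $S$ and the triangle-disjointness furnished by $\{C_4,C_5\}$-freeness are exactly what make this schedulable. A secondary nuisance is the vertex bound itself: two bad vertices of degree $12$ already contribute close to $24$ vertices, so one must check that their forced neighborhoods never overlap badly enough to push $|V(H)|$ past $29$.
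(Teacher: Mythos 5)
Your main line of attack coincides with the paper's proof: the same induced subgraph $H$ (the configuration itself together with every neighbor of each bad vertex lying on a worse or worst $3$-face, giving the bound via $2d(w)-1+6\leq29$), (\textbf{FIX}) via the block structure and Lemma \ref{fix}, and (\textbf{FORB}) by an explicit coloring order that spends the slack of the $3$-vertices and the $\geq3$ colors retained at each bad vertex, using $(P_3+P_4)$-independence to restrict the admissible pairs $S$. The only substantive simplification the paper makes that you do not is for configurations (1)--(4): there every two vertices of $H$ are joined by a $P_3$ or a $P_4$, so $|S|\leq1$ and (\textbf{FORB}) collapses into (\textbf{FIX}), with explicit coloring orders needed only in (5) and (6) (e.g.\ $S=\{v,u_2\}$, $\{u_2,u_3\}$, $\{s_1,u_3\}$, colored in the orders $v,u_2,u_3$, resp.\ $u_3,u_2,v$, resp.\ $u_3,s_1,z,v,u_2$, then greedily). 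Your uniform schedule handles these cases as well, modulo the small sequencing point that $v$ itself lies in one of the blocks around the bad vertex, so that block cannot be colored wholesale before $v$; the paper's concrete orders sidestep this.

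One caveat: your proposed ``re-rooting'' shortcut via Lemma \ref{CP}(2) does not go through as stated. Viewed from the bad vertex $w_1$, the triangle $w_1u_1v$ is not among the stalks (a)--(m): the taxonomy only admits triangles through the root whose other vertices are $3$-vertices, $4$-vertices, or bad vertices, whereas here $v$ is a generic vertex of degree $4$, $5$, or $6$ that need not be bad; moreover the pendant paths $vu_2u_3$ and $vu_4u_5$ in (5) and (6) hang off $v$, not $w_1$, so they are not $w_1$-stalks at all, and Lemma \ref{CP} also requires a root of degree at least $5$, which fails for $w_1$'s triangle neighbor $v$ in configuration (1). Since you present this only as an optional alternative and your direct (\textbf{FORB}) argument, which matches the paper's, covers all six configurations, the proof stands, but the shortcut should be deleted. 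Your suspicion about the vertex count is also well placed: in configurations (1) and (4) with two bad vertices of degree $12$, a naive count of the forced neighborhoods can exceed $29$ (e.g.\ $1+8+2+20=31$ in (4) with $t=6$), a point the paper's stated bound glosses over; this is harmless for the application, which only needs a fixed constant bound, but it deserves the explicit check you call for.
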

\begin{figure}[H]
\begin{center}
\includegraphics[scale=0.8]{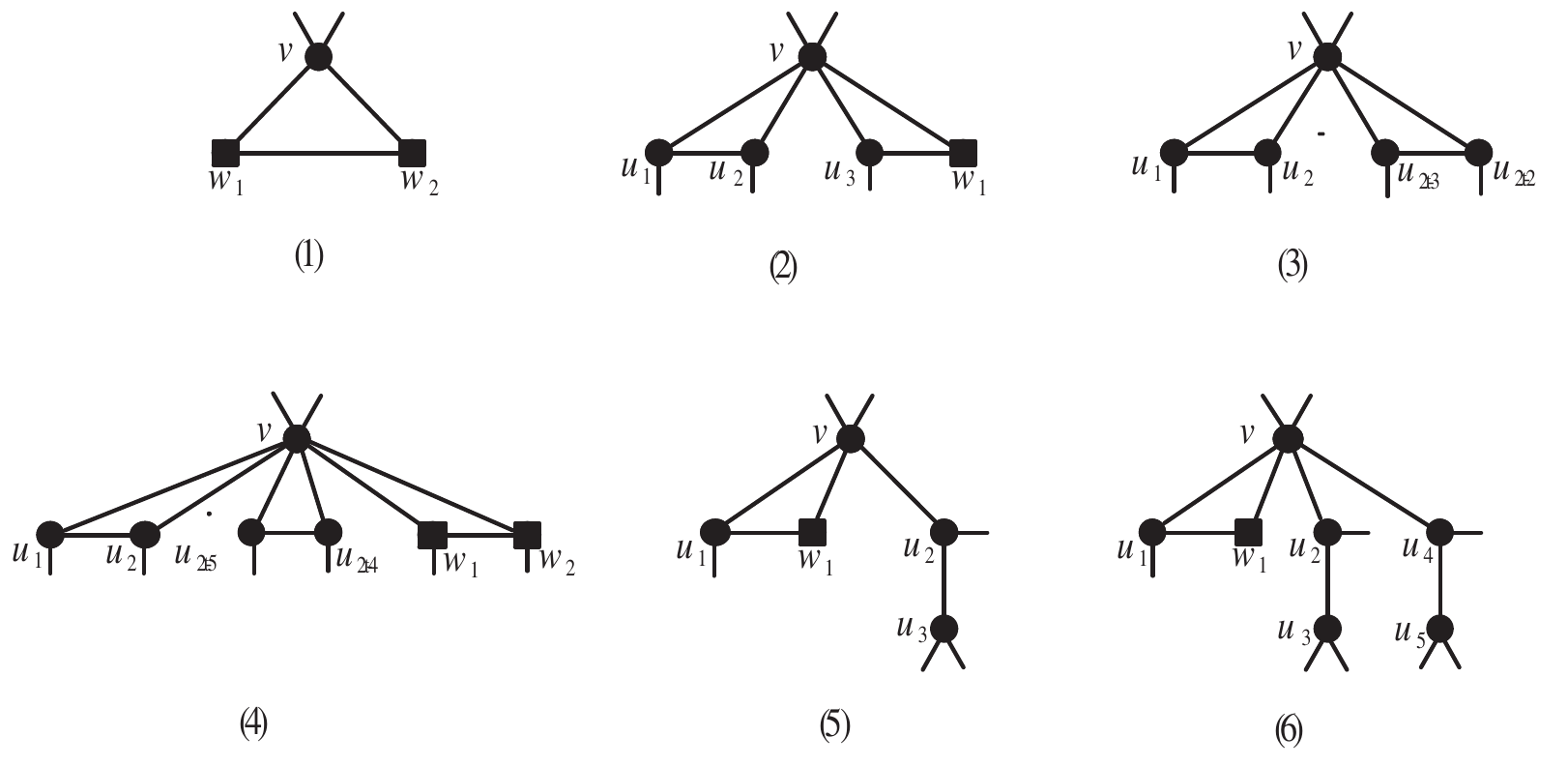}\\
{Figure 1: Situation in Lemma \ref{4redu60}}
\end{center}
\end{figure}
\begin{proof}
Let $H$ be the subgraph of $G$ induced by $\{v, u_i, v_i, w_i\}$ for all $i$ and all neighbors of bad vertices lying on any the worst or worse $3$-faces. Note that $|V(H)|\leq 2d(w)-1+6\leq29$. Consider any vertex $z\in V(H)$ and a $(\deg_{H}+\delta_{G,4})\downarrow z$-assignment $L$ for $H$. Since each block $B$ in $H$ is a $3$-face or an edge and then we can easily observe that $B$ satisfies (\textbf{FIX}) under $L$, by Lemma \ref{fix}, we know that $H$ satisfies (\textbf{FIX}).

Let $S$ be a $(P_3+P_4)$-independent subset in $G$ with $|S|\leq2$ and $L'$ a $(\deg_H+\delta-1_S)$-assignment for $H$. In $(1), (2), (3), (4)$, since every two vertices is connected by a path of length $2$ or $3$ in $H$, (\textbf{FORB}) is implied by (\textbf{FIX}). In $(5)$, if $S=\{v, u_2\}$ (or $S=\{u_2, u_3\}$), then we can first $L^{'}$-color $v, u_2, u_3$ (or $u_3, u_2, v$) and then greedily $L^{'}$-color the remaining vertices. If $S=\{s_1, u_3\}$, where $s_1$ lies on the worst or worse $3$-face $w_1s_1z$ incident with $w_1$, then we can first $L^{'}$-color $u_3, s_1, z, v, u_2$ in order and the greedily $L^{'}$-color the remaining vertices. In $(6)$, the result also holds by the same argument.

Hence, both (\textbf{FIX}) and (\textbf{FORB}) hold, and thus $H$ is $(P_3+P_4, 4)$-reducible.
\end{proof}

\begin{lemma}\label{redu2}
Let $G$ be a $\{C_4, C_5\}$-free planar graph, there are three $3$-faces $uvw, v_1v_2v_3, \\w_1w_2w_3$ that are incident with a $6$-face $f=vww_1w_3v_3v_1v$ such that $d(u)=3$, $d(v)=d(w)=d(v_1)=d(w_1)=4$, and $d(v_i)\leq12,d(w_i)\leq12$ for all $i\in\{2,3\}$. If each $x\in \{v_2,v_3,w_2,w_3\}$ satisfies that either $3\leq d(x)\leq4$ or $x$ is dangerous, then $G$ contains a $(P_3+P_4, 4)$-reducible induced subgraph with at most $49$ vertices.
\end{lemma}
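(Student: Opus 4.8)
The plan is to construct an induced subgraph $H$ of $G$ supported on the relevant vertices and then verify the two defining properties (\textbf{FIX}) and (\textbf{FORB}) of a $(P_3+P_4,4)$-reducible subgraph, following exactly the template established in Lemmas \ref{CP} and \ref{4redu60}. Concretely, I would let $H$ be the subgraph of $G$ induced by the vertex set of the $6$-face $f$ together with the apex $u$ of the triangle $uvw$, the third vertices $v_2,v_3,w_2,w_3$ of the two triangles $v_1v_2v_3$ and $w_1w_2w_3$, and — crucially — all neighbors of each $x\in\{v_2,v_3,w_2,w_3\}$ that happen to be dangerous, specifically the vertices lying on the two $(3,3,\cdot)$-faces that witness the condition $f_{3,3}(x)=\lfloor(d(x)-3)/2\rfloor$. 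The vertex count is bounded because each $x$ has degree at most $12$ and contributes at most $2d(x)-1$ new vertices through its incident worst $3$-faces; summing over the four such vertices and adding the constant-size core around $f$ gives the stated bound of $49$.

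First I would establish (\textbf{FIX}). Here the key structural input is $\{C_4,C_5\}$-freeness, which forces the triangles and the $6$-face to interact only in the prescribed way and guarantees that the blocks of $H$ are small — each block is either an edge or a $3$-face. Since a single edge and a single triangle each satisfy the block hypothesis of Lemma \ref{fix} under any $f\downarrow z$-assignment with $f=\deg_H+\delta_{G,4}$ (an edge needs lists of size at least $1$ and $2$ after fixing, a triangle is $3$-colorable when one vertex is pre-fixed and the others retain lists of size $2$), Lemma \ref{fix} immediately yields (\textbf{FIX}) for all of $H$. I expect this step to be essentially mechanical once the block decomposition is pinned down.

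The substance lies in verifying (\textbf{FORB}) for every $(P_3+P_4)$-independent set $S$ with $|S|\le 2$ and every $(\deg_H+\delta_{G,4}-1_S)$-assignment $L'$. When $S$ is empty or both its vertices are joined by a short path inside $H$, (\textbf{FORB}) reduces to (\textbf{FIX}) exactly as in the earlier lemmas. The genuinely new configurations arise when $S=\{s_1,s_2\}$ with $\mathrm{dist}(s_1,s_2)\ge 4$, placing $s_1,s_2$ in different triangles or on the dangerous pendant faces, so that fixing their colors to be avoided reduces each affected list by one. The strategy is to order the vertices so that the triangle $uvw$ and the $6$-face are colored after their constrained neighbors: one colors greedily outward from each $s_i$ through its block, then colors $v_2,v_3,w_2,w_3$, then the four degree-$4$ vertices $v,w,v_1,w_1$ of the $6$-face, and finally $u$. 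Because $d(v)=d(w)=d(v_1)=d(w_1)=4$, these vertices have lists of size at least $2$ in $H$, and the $6$-face plus its attached triangles form a sparse structure (no $4$- or $5$-cycles) in which a deficiency-one list assignment still admits a coloring by a Thomassen-type degree argument (Lemma \ref{degree}).

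The main obstacle I anticipate is the case analysis around the dangerous vertices $v_2,v_3,w_2,w_3$: a dangerous vertex can have up to several incident worst $3$-faces, and when $s_1$ or $s_2$ sits on one of these faces, the reduced list at the dangerous vertex must still be large enough to complete the coloring after its mandatory neighbors are handled. Checking that the distance condition $\mathrm{dist}(s_1,s_2)\ge 4$ genuinely prevents two simultaneous deficiencies from landing on a single block — and that $\{C_4,C_5\}$-freeness rules out the short cycles that would otherwise make the greedy order fail — is where the careful bookkeeping concentrates. I would dispatch this by mirroring the ordered-coloring arguments of cases $(5)$ and $(6)$ of Lemma \ref{4redu60}, coloring the deficient endpoints first and propagating through each triangle before touching the common $6$-face.
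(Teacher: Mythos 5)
Your construction of $H$ is the same as the paper's (the core $A=\{u,v,w,v_1,v_2,v_3,w_1,w_2,w_3\}$ together with the $3$-vertices of the worst triangles at the dangerous vertices, called $B$), and your (\textbf{FORB}) strategy is also the paper's. However, your verification of (\textbf{FIX}) rests on a false structural claim: it is not true that every block of $H$ is an edge or a $3$-face. Each of the three triangles $uvw$, $v_1v_2v_3$, $w_1w_2w_3$ shares an edge ($vw$, $v_1v_3$, $w_1w_3$ respectively) with the $6$-cycle $vww_1w_3v_3v_1$, so the entire core $H[A]$ fuses into a single $2$-connected block on nine vertices; only the worst triangles pendant at $v_2,v_3,w_2,w_3$ form separate small blocks. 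Consequently the ``edge or triangle'' block check you propose does not apply to the core, and as written your (\textbf{FIX}) argument fails. The repair is exactly what the paper does: for a $(\deg_H+\delta_{G,4})$-assignment one has $|L(x)|\geq \deg_{H[A]}(x)$ for all $x\in A$ with strict inequality at $u$ (since $\deg_H(u)=2$ and $\delta_{G,4}(u)=1$), and the core block is $2$-connected, neither complete nor an odd cycle, so Lemma \ref{degree} handles the core block even with one vertex fixed, after which Lemma \ref{fix} and a greedy extension to $B$ give (\textbf{FIX}). Note this surplus at $u$ is precisely why $u$ is included in the configuration; your proposal never isolates this as the vertex making Thomassen's hypothesis go through.

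For (\textbf{FORB}) you are on the paper's track: the paper observes that $(P_3+P_4)$-independence forces either $|S\cap A|\leq 1$ or $S\in\{\{v,v_1\},\{w,w_1\},\{v_3,w_3\}\}$ (adjacent pairs along the $6$-face), colors $H[S]$ first, deletes the two resulting colors from the lists of neighbors, and finishes $A-S$ by Lemma \ref{degree} using the surplus at $u$, extending greedily to $B$; the case of $S$ meeting $B$ is dispatched by (\textbf{FIX}) plus greedy. Two cautions on your version: first, your fixed greedy order on $v,w,v_1,w_1$ alone is not guaranteed to succeed (e.g.\ $w_1$ has list size $3$ and can see three distinctly colored neighbors $w,w_2,w_3$ if colored too late), so the appeal to Lemma \ref{degree} is not an optional polish but the essential step -- and with it the hand-crafted ordering becomes unnecessary. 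Second, your vertex count is off as stated: four dangerous vertices contributing $2d(x)-1\leq 23$ vertices each plus the core of $9$ gives $101$, not $49$; the correct accounting is that a dangerous $x$ with $d(x)\leq 12$ lies on at most $\lfloor (d(x)-3)/2\rfloor\leq 4$ worst triangles, contributing at most $8$ new $3$-vertices, which comfortably yields $|V(H)|\leq 49$. Neither issue requires a new idea, but the block-decomposition error is a genuine flaw in the written (\textbf{FIX}) argument and must be corrected along the lines above.
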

\begin{figure}[H]
\begin{center}
\includegraphics[scale=0.8]{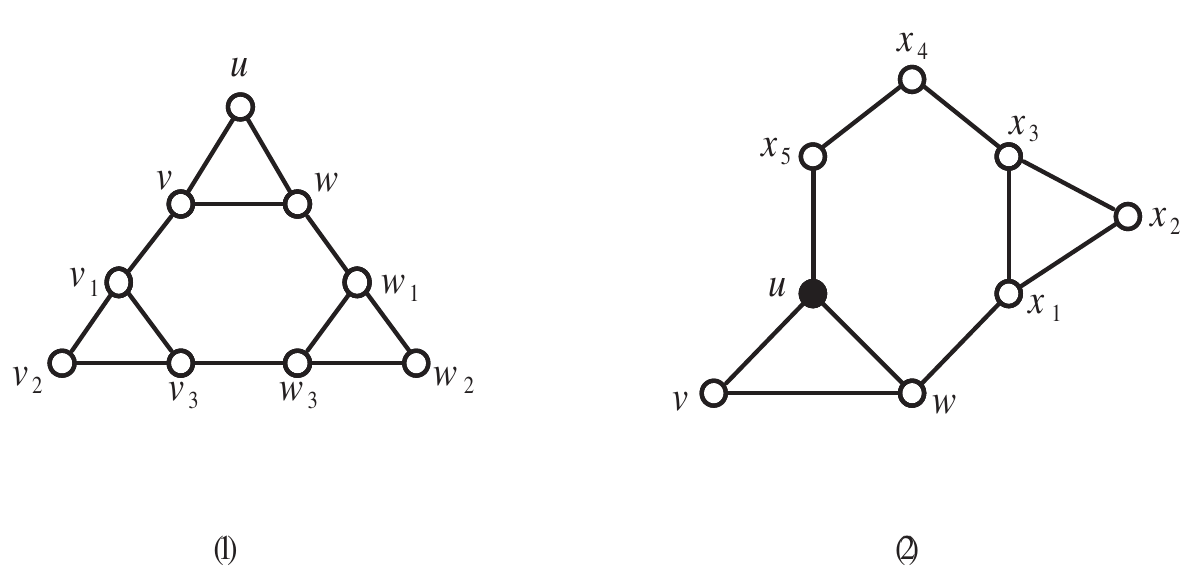}\\
{Figure 2: Situation in Lemma \ref{redu2} and Lemma \ref{redu3}}
\end{center}
\label{sub3}
\end{figure}
\begin{proof}
Let $H$ be the subgraph of $G$ induced by $A=\{u,v,w,v_1,v_2,v_3,w_1,w_2,w_3\}$ and all possible 3-neighbors inside a worst triangle incident with each $x\in \{v_2,v_3,w_2,w_3\}$, which is denoted by $B$ (possibly empty). Clearly, $V(H)=A\cup B$ and $|V(H)|\leq49$. Now we are ready to prove that $H$ is $(P_3+P_4, 4)$-reducible. It suffices to verify that $H$ satisfies (\textbf{FIX}) and (\textbf{FORB}). Firstly, consider an arbitrary $(\deg_{H}+\delta_{G,4})$-assignment $L$ for $H$. Then $|L(x)|>d_H(x)$ for each $x\in B\cup\{u\}$. In particular, $|L(x)|\geq d_{H'}(x)$ for each $x\in A$ and $|L(u)|>d_{H'}(u)$, where we denote $H'=H[A]$. It follows that (\textbf{FIX}) is immediately implied by Lemma \ref{fix}.

It remains to verify that for all $\{P_3+P_4\}$-independent set $S$, $H$ is $L'$-colorable for any $(\deg_H+\delta-1_S)$-assignment $L'$. By the assumption, we can easily observe that either $|S\cap A|\leq1$ or $S\in\{\{v,v_1\},\{w,w_1\},\{v_3,w_3\}\}$. The case when $S$ is a singleton is easily implied by (\textbf{FIX}). If $|S|=2$ and $S$ contains at least one $3$-vertex inside $B$, then we can first $L'$-color $H'$ by (\textbf{FIX}) and then greedily $L'$-color all vertices in $B$. Hence it remains to consider the case when $S\in\{\{v,v_1\},\{w,w_1\},\{v_3,w_3\}\}$. In all cases, we firstly $L'$-color $H[S]$ and let $\{a,b\}$ be the resulting colors on $S$. Let $H^*=H'-S$ and define a list assignment $L^*$ such that $L^*(z)=L'(z)\setminus\{a,b\}$ for each $z\in V(H')\cap N_H(S)$, while $L^*(z)=L'(z)$ for each $z\in V(H')\setminus N_H(S)$. It is easy to see that $|L^*(z)|\geq \deg_{H^*}(z)$ for all $z\in V(H^*)$. Since $|L^*(u)|>\deg_{H^*}(u)|$, by Lemma \ref{degree}, we can always $L^*$-color all vertices in $A-S$ and then extend the coloring to $B$ greedily. Hence $H$ satisfies (\textbf{FORB}).\medskip

This completes the proof of Lemma \ref{redu2}.
\end{proof}

\begin{lemma}\label{redu3}
Let $G$ be a $\{C_4, C_5\}$-free planar graph, if there are two $3$-faces $uvw, x_1x_2x_3$ that are adjacent to a $6$-face $uwx_1x_3x_4x_5u$ such that $d(u)=d(x_5)=3$ and $d(v)=d(w)=d(x_1)=4$. If each $x\in \{x_2,x_3,x_4\}$ satisfies that either $3\leq d(x)\leq4$ or $x$ is dangerous, then $G$ contains a $(P_3+P_4, 4)$-reducible induced subgraph with at most $37$ vertices.
\end{lemma}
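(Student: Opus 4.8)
The plan is to follow the template of Lemma~\ref{redu2}. Write $A=\{u,v,w,x_1,x_2,x_3,x_4,x_5\}$ and let $B$ be the set of all degree-$3$ vertices lying on a worst $3$-face incident with a dangerous vertex among $\{x_2,x_3,x_4\}$; set $H=G[A\cup B]$. A dangerous vertex of degree $d\le 12$ carries $\lfloor(d-3)/2\rfloor\le 4$ worst faces, each contributing two such $3$-vertices, so a routine count gives $|V(H)|\le 37$. Because $G$ is $\{C_4,C_5\}$-free, every chord of the hexagon $uwx_1x_3x_4x_5$ would close a $C_4$ or a $C_5$; hence the hexagon is induced, and together with the two triangles $uvw$ and $x_1x_2x_3$ glued along $uw$ and $x_1x_3$, the core $H[A]$ is a $2$-connected graph that is neither complete nor an odd cycle, while the worst triangles hang off $x_2,x_3,x_4$ as pendant blocks.

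First I would settle \textbf{(FIX)}. Writing $f=\deg_H+\delta_{G,4}$, I would record $f(u)=4$, $f(x_5)=3$, $f(v)=2$, $f(w)=f(x_1)=3$, and $f(b)=3$ for $b\in B$; the low-degree hypothesis and the dangerous condition together force $f(x_i)\in\{2,3,4\}$ for $i\in\{2,3,4\}$. In particular $f(z)\ge\deg_{H[A]}(z)$ for every $z\in A$, with strict inequality at the two degree-$3$ vertices $u$ and $x_5$. I then verify the block hypothesis of Lemma~\ref{fix}: a pendant worst triangle always contains a vertex of list size $3$, hence is $(f\downarrow v)$-colorable; and for the block $H[A]$, after fixing a vertex $v$ the graph $H[A]-v$ stays connected and at least one of $u,x_5$ retains a surplus, so Lemma~\ref{degree} applies. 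This yields \textbf{(FIX)}, and singleton sets $S$ in \textbf{(FORB)} follow at once.

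For \textbf{(FORB)} I would reduce to $(P_3+P_4)$-independent sets $S$ of size exactly two. Since $H[A]$ has diameter $3$, a pair inside $A$ can be $(P_3+P_4)$-independent only if it is an edge with no common neighbour and no connecting $P_4$; inspecting the ten edges of $H[A]$ leaves precisely the four non-triangle edges $\{w,x_1\}$, $\{x_3,x_4\}$, $\{x_4,x_5\}$, $\{x_5,u\}$ (the attached $B$-vertices only lengthen paths between vertices of $A$, so no new pairs arise inside $A$). If $S$ meets $B$, I color $H[A]$ first—fixing the reduced $A$-vertex when its list is tight and absorbing the loss through the surplus at $u$ or $x_5$, otherwise directly by the second clause of Lemma~\ref{degree}—and then extend greedily to $B$, always coloring the $S$-vertex first inside its worst triangle.

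The four genuine $A$-pairs split into two regimes. For $\{w,x_1\}$, $\{x_3,x_4\}$, $\{x_4,x_5\}$ I would $L'$-color the edge $S$, delete the two used colors from the lists of $N_H(S)$, and pass to the connected graph $H^*=H[A]-S$; a short check shows every surviving list still dominates its $H^*$-degree and that $u$ or $x_5$ remains in $H^*$ with a surplus, so Lemma~\ref{degree} finishes $H^*$ before $B$ is colored greedily. The delicate case, and the main obstacle, is $S=\{x_5,u\}$: here both surplus vertices are consumed, and deleting $S$ can leave a graph all of whose blocks are tight triangles or edges, where Lemma~\ref{degree} fails. The remedy is to \emph{not} split $S$ off: since $f(u)=4$ and $f(x_5)=3$ each drop by only one, the reduced assignment $L'$ still satisfies $|L'(z)|\ge\deg_{H[A]}(z)$ for all $z\in A$, and as $H[A]$ is $2$-connected, not complete and not an odd cycle, the second clause of Lemma~\ref{degree} colors $H[A]$ outright, after which $B$ is extended greedily. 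Confirming that $\{x_5,u\}$ is the unique pair forcing this detour, and that the pendant dangerous triangles never manufacture an unexpected $P_3$ or $P_4$, is the fiddliest part of the argument.
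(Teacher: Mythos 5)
Your proposal matches the paper's proof essentially step for step: the same induced subgraph $H=G[A\cup B]$ with the worst-triangle $3$-vertices as $B$, (\textbf{FIX}) via Lemma \ref{fix}, the same four critical pairs $\{w,x_1\},\{x_3,x_4\},\{x_4,x_5\},\{u,x_5\}$, the same split-off-$S$-then-apply-Lemma \ref{degree} argument for the first three, and the same special treatment of $S=\{u,x_5\}$ by coloring $H[A]$ directly with Thomassen's lemma before extending greedily to $B$. Your extra verifications (2-connectedness of $H[A]$, tracking which surplus survives) only make explicit what the paper leaves implicit, so the proposal is correct and takes the same route.
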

\begin{proof}
Let $H$ be the subgraph of $G$ induced by $A=\{u,v,w,x_1,x_2,x_3,x_4,x_5\}$ and all possible 3-neighbors inside a worst triangle incident with each $x_i$ for all $i\in \{2,3,4\}$, which is denoted by $B$. Clearly, $V(H)=A\cup B$ and $|V(H)|\leq37$. Now we are ready to prove that $H$ is $(P_3+P_4, 4)$-reducible. It suffices to verify that $H$ satisfies (\textbf{FIX}) and (\textbf{FORB}). Consider an arbitrary $(\deg_{H}+\delta_{G,4})$-assignment $L$ of $H$, Then $|L(x)|>d_H(x)$ for each $x\in B\cup\{u,x_5\}$. In particular, $|L(x)|\geq d_{H'}(x)$ for each $x\in A$, where we denote $H'=H[A]$. It follows that (\textbf{FIX}) is immediately implied by Lemma \ref{fix}. %let $c$ be a color in $L(z)$ and let $L'$ be the list assignment for $H-z$ obtained from $L$ by removing $c$ from the lists of neighbors of $z$. We should prove that $H-z$ is $L'$-colorable. For any $z\in V(H)$, we can get that $H-z$ is connected and contains a vertex $z^{*}$ such that $|L(z^{*})|> \deg_{H-z}(z^{*})$, it follows that $H-z$ is $L'$-colorable, which implies (\textbf{FIX}) holds.

It remains to verify that for all $\{P_3+P_4\}$-independent set $S$, $H$ is $L'$-colorable for any $(\deg_H+\delta-1_S)$-assignment $L'$. By the assumption, we can easily observe that either $|S\cap A|\leq1$ or $S\in\{\{w,x_1\}, \{x_3,x_4\}, \{x_4,x_5\}, \{u,x_5\}\}$. The case when $S$ is a singleton is easily implied by (\textbf{FIX}). If $|S|=2$ and $S$ contains at least one $3$-vertex inside $B$, then we can first $L'$-color $H'$ by (\textbf{FIX}) and then greedily $L'$-color all vertices in $B$. Hence it only remains to consider the case when $S\in\{\{w,x_1\}, \{x_3,x_4\}, \{x_4,x_5\}, \{u,x_5\}\}$.

If $S=\{u,x_5\}$, then consider the subgraph $H'$ together with the list assignment $L'$, by Lemma \ref{degree}, we can always $L'$-color all vertices in $A$, and then greedily $L'$-color all vertices in $B$.
In the remaining cases, we firstly $L'$-color $H[S]$ and let $\{a,b\}$ be the resulting colors on $S$. Define a list assignment $L^*$ on $V(H)-S$ such that $L^*(z)=L'(z)\setminus\{a,b\}$ for each $z\in N_H(S)$, while $L^*(z)=L'(z)$ for each $z\in V(H)\setminus N_H(S)$. Since $|L^*(u)|=4, |L^*(x_5)|=3$, by Lemma \ref{degree}, we can always $L^*$-color all vertices in $A-S$ and then extend the coloring to $B$ greedily. Hence $H$ satisfies (\textbf{FORB}).
\end{proof}

%\begin{figure}[H]
%\begin{center}
%\includegraphics[scale=0.9]{combina.eps}\\
%{Figure 5: }
%\end{center}
%\label{sub5}
%\end{figure}

\medskip
\begin{figure}[H]
\begin{center}
\includegraphics[scale=0.7]{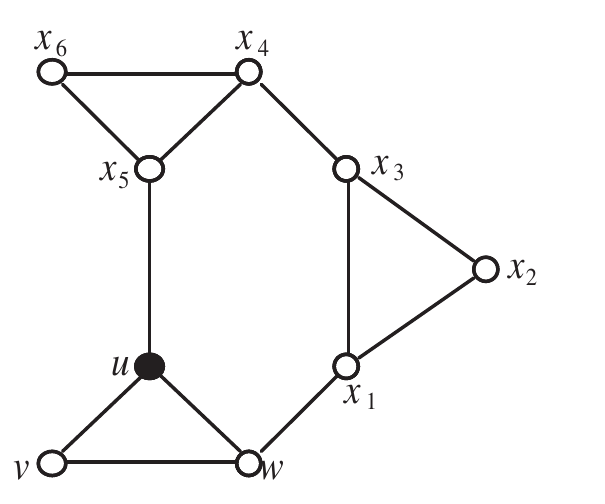}\\
{Figure 3: Situation in Lemma \ref{redu4}}
\end{center}
\end{figure}
\begin{lemma}\label{redu4}
Let $G$ be a $\{C_4, C_5\}$-free planar graph, if there are three $3$-faces $uvw$, $x_1x_2x_3$, $x_4x_5x_6$ that are adjacent to a $6$-face $uwx_1x_3x_4x_5u$ such that $d(u)=3$ and $d(v)=d(w)=d(x_1)=d(x_6)=4$ (see Figure $\mathrm{3}$). If $x_5$ is dangerous with $d(x_5)$ odd, while each $x\in \{x_2,x_3,x_4\}$ satisfies that either $3\leq d(x)\leq4$ or $x$ is dangerous, then $G$ contains a $(P_3+P_4, 4)$-reducible induced subgraph with at most $45$ vertices.
\end{lemma}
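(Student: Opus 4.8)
The plan is to mirror the proof of Lemma~\ref{redu3}. I would set $A=\{u,v,w,x_1,x_2,x_3,x_4,x_5,x_6\}$, let $B$ be the set of all $3$-vertices lying in a worst triangle incident with one of the dangerous vertices among $x_2,x_3,x_4,x_5$, and put $H=G[A\cup B]$ and $H'=H[A]$. Since a dangerous vertex has degree at most $12$ it carries at most $\lfloor(12-3)/2\rfloor=4$ worst triangles and hence contributes at most $8$ vertices to $B$, so $|V(H)|\le 9+4\cdot 8=41\le 45$. By $\{C_4,C_5\}$-freeness the three prescribed $3$-faces meet the $6$-cycle $uwx_1x_3x_4x_5$ only along the edges $uw$, $x_1x_3$, $x_4x_5$ and there are no chords, so $H'$ is a single $2$-connected block, namely a $6$-cycle with three triangular ears; it is neither complete nor an odd cycle. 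Each worst triangle is a further block joined to $H'$ at the cutvertex given by its dangerous vertex. Within $H'$ one has $\deg_{H'}(u)=\deg_{H'}(w)=\deg_{H'}(x_1)=\deg_{H'}(x_3)=\deg_{H'}(x_4)=\deg_{H'}(x_5)=3$ and $\deg_{H'}(v)=\deg_{H'}(x_2)=\deg_{H'}(x_6)=2$.

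Write $f=\deg_H+\delta_{G,4}$ for the guaranteed list sizes. The pivotal computation is $f(x_5)=4$. Since $x_5$ is dangerous with odd degree, $f_{3,3}(x_5)=\frac{d(x_5)-3}{2}$; by $C_4$-freeness no $3$-vertex is shared by two worst triangles of $x_5$, and none of $u,x_4,x_6$ lies in such a triangle, so the worst triangles supply exactly $d(x_5)-3$ distinct neighbours which, together with $u,x_4,x_6$, exhaust $N(x_5)$. Hence $\deg_H(x_5)=d(x_5)$ and $f(x_5)=d(x_5)+(4-d(x_5))=4$; likewise $f(u)=4$, so $u$ and $x_5$ each carry surplus $1$ over their $H'$-degree. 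The hypothesis that every $x\in\{x_2,x_3,x_4\}$ is a $4^{-}$-vertex or dangerous is used precisely to guarantee $f(x)\ge\deg_{H'}(x)$ (a non-dangerous $5^{+}$-vertex would push $f$ below the $H'$-degree); one checks $f(x_2)\ge 2$, $f(x_3),f(x_4)\ge 3$, together with $f(v)=f(x_6)=2$ and $f(w)=f(x_1)=3$, so $f\ge\deg_{H'}$ holds everywhere on $A$.

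Given these bounds, (\textbf{FIX}) follows from Lemma~\ref{fix}: a worst-triangle block has two vertices of list size $3$ and so is colorable with any one vertex precolored, while for $H'$ I would precolor the fixed vertex $z$, delete it, and colour the connected graph $H'-z$ by Lemma~\ref{degree}, using that one of $u,x_5$ survives with strict surplus. For (\textbf{FORB}) with $|S|\le 2$ the singleton case is immediate from (\textbf{FIX}). A short distance check in $H'$ shows the only $(P_3+P_4)$-independent pairs contained in $A$ are $\{w,x_1\}$, $\{x_3,x_4\}$ and $\{u,x_5\}$, since every triangle edge has a common neighbour, the three apices $v,x_2,x_6$ are pairwise at distance $3$, and all other pairs lie at distance $2$ or $3$. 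If $S$ meets $B$, I would first $L'$-colour $H'$ (its induced assignment reduces at most one vertex of $A$, a case covered by the established (\textbf{FIX})) and then extend greedily over the worst triangles, where the at most one reduced $3$-vertex per triangle keeps an available colour. For the three surviving pairs I would precolour $H[S]$ with distinct colours, delete them from the neighbours in $H'$, and check $|L^{\ast}|\ge\deg_{H^{\ast}}$ on the residual graph: for $\{u,x_5\}$ the lists of $u,x_5$ only shrink to $3=\deg_{H'}$, so all of $H'$ is coloured by Lemma~\ref{degree} via its non-complete, non-odd-cycle $2$-connected structure (exactly as in Lemma~\ref{redu3}); for $\{w,x_1\}$ and $\{x_3,x_4\}$ the connected graph $H'-S$ still contains a surplus vertex (one of $u,x_5$), so Lemma~\ref{degree} applies, after which we extend to $B$.

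The hard part will be the list-size bookkeeping, not any individual colouring: one must verify that after precolouring an adjacent pair such as $\{w,x_1\}$ every remaining vertex --- in particular the degree-$3$ vertices $x_4,x_5$ of the residual graph, whose lists are only just as large as their residual degrees --- still admits a proper extension, and that a strictly surplus vertex survives so that Lemma~\ref{degree} applies. This is exactly where the two hypotheses are consumed: the odd degree of $x_5$ forces $f(x_5)=4$, which alone rescues the pair $\{u,x_5\}$ and supplies a surplus vertex in the other residual graphs, and the ``$4^{-}$ or dangerous'' condition on $x_2,x_3,x_4$ keeps their effective lists at least as large as their $H'$-degrees.
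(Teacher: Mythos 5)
Your proposal is correct and follows essentially the same route as the paper's own proof: the same induced subgraph $H=G[A\cup B]$ with $B$ the worst-triangle $3$-vertices, (\textbf{FIX}) via Lemma~\ref{fix}, reduction of (\textbf{FORB}) to the pairs $\{w,x_1\}$, $\{x_3,x_4\}$, $\{u,x_5\}$, and Lemma~\ref{degree} applied to the residual graph after precolouring $H[S]$ (with $\{u,x_5\}$ handled by degree-choosability of $H[A]$ directly). If anything, your explicit computation that the odd-dangerous hypothesis forces $\deg_H(x_5)+\delta_{G,4}(x_5)=4$, and your tracking of which of $u,x_5$ retains strict surplus after deleting $S$, is more careful than the paper's terse assertion $|L^*(u)|=|L^*(x_5)|=4$ (which, read literally, fails for $S=\{w,x_1\}$ since $u\in N_H(w)$, though the surplus argument repairs it exactly as you describe).
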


\begin{proof}
Let $H$ be the subgraph of $G$ induced by $A=\{u,v,w,x_1,x_2,x_3,x_4,x_5,x_6\}$ and all possible 3-neighbors inside a worst triangle incident with each $x_i$ for all $i\in \{2,3,4,5\}$, which is denoted by $B$. Clearly, $V(H)=A\cup B$ and $|V(H)|\leq45$. Now we are ready to prove that $H$ is $(P_3+P_4, 4)$-reducible. First, consider a $(\deg_{H}+\delta_{G,4})$-assignment $L$ for $H$, it is easy to observe that (\textbf{FIX}) is implied by Lemma \ref{fix}. %let $c$ be a color in $L(z)$ and let $L'$ be the list assignment for $H-z$ obtained from $L$ by removing $c$ from the lists of neighbors of $z$. We should prove that $H-z$ is $L'$-colorable. For any $z\in V(H)$, we can get that $H-z$ is connected and contains a vertex $z^{*}$ such that $|L(z^{*})|> \deg_{H-z}(z^{*})$, it follows that $H-z$ is $L'$-colorable, which implies (\textbf{FIX}) holds.

It remains to verify that for all $\{P_3+P_4\}$-independent set $S$, $H$ is $L'$-colorable for any $(\deg_H+\delta-1_S)$-assignment $L'$. The case when $S$ is a singleton is easily implied by (\textbf{FIX}), while the case when $S$ contains a 3-vertex inside $B$ is implied by (\textbf{FIX}) and Lemma \ref{degree}. Hence it suffices to consider $S\in\{\{w,x_1\}, \{x_3,x_4\}, \{u,x_5\}\}$.

If $S=\{u,x_5\}$, then consider the subgraph $H[A]$ together with the list assignment $L'$, by Lemma \ref{degree}, we can always $L'$-color all vertices in $A$, and then greedily $L'$-color all vertices in $B$.
In the remaining cases, we firstly $L'$-color $H[S]$ and let $\{a,b\}$ be the resulting colors on $S$. Let $H^*=H-S$ and define a list assignment $L^*$ such that $L^*(z)=L'(z)\setminus\{a,b\}$ for each $z\in V(H^*)\cap N_H(S)$, while $L^*(z)=L'(z)$ for each $z\in V(H^*)\setminus N_H(S)$. Since $|L^*(u)|=|L^*(x_5)|=4$, by Lemma \ref{degree}, we can always $L^*$-color all vertices in $A-S$ and then extend the coloring to $B$ greedily. Hence $H$ satisfies (\textbf{FORB}).

\end{proof}

%\begin{figure}[H]
%\begin{center}
%\includegraphics[scale=0.8]{local0.eps}\\
%{Figure 4: Situation in Lemma \ref{local}}
%\end{center}
%\end{figure}
\begin{figure}[H]
\begin{center}
\includegraphics[scale=0.8]{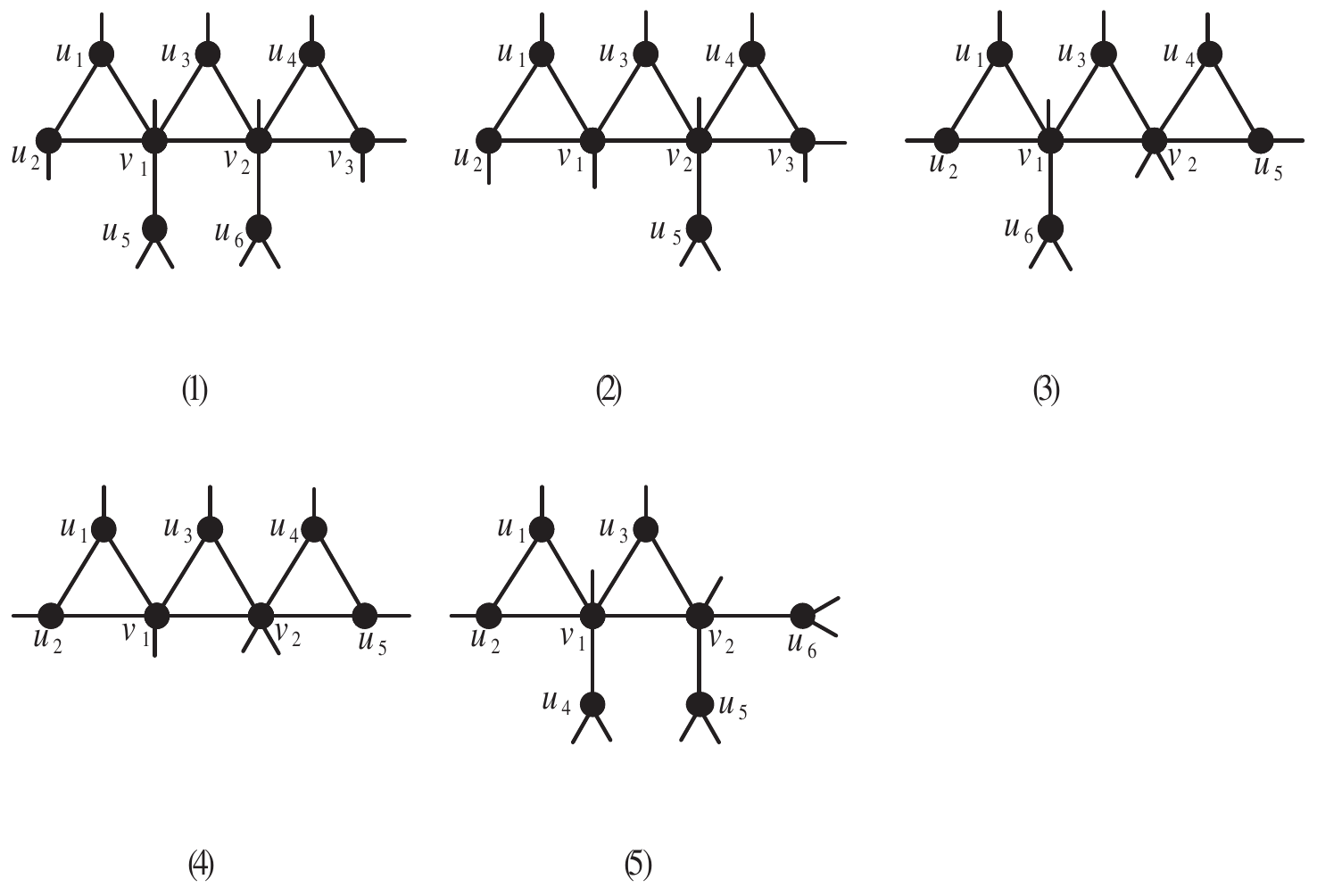}\\
{Figure 4: Situation in Lemma \ref{local}}
\end{center}
\end{figure}
\begin{lemma}\label{local}
If $G$ has a subgraph isomorphic to one of the configurations in Figure $4$, then $G$ contains a $(P_3+P_4, 4)$-reducible induced subgraph with at most $9$ vertices.
\end{lemma}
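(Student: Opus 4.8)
The plan is to follow the template established in Lemmas \ref{4redu60}--\ref{redu4}: build a small induced subgraph $H$ from the displayed configuration, verify (\textbf{FIX}) blockwise via Lemma \ref{fix}, and then settle (\textbf{FORB}) by a short case analysis on the admissible independent sets, leaning on Thomassen's degree-choosability result (Lemma \ref{degree}).

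First I would let $H$ be the subgraph of $G$ induced by the vertices drawn in the relevant configuration of Figure $4$ together with every $3$-neighbor forced to lie inside a worst triangle incident with one of the listed vertices. Since each configuration displays only a handful of vertices and each such vertex contributes at most one extra $3$-neighbor, a direct count gives $|V(H)|\le 9$. Under the assignment $\deg_H+\delta_{G,4}$, every $3$-vertex $u$ of $G$ appearing in $H$ carries one unit of surplus, that is $|L(u)|=\deg_H(u)+4-3>\deg_H(u)$, whereas a $4$-vertex $v$ with $\deg_G(v)=4$ is tight with $|L(v)|=\deg_H(v)$. This surplus vertex is exactly what will power the Thomassen step below, since every block of $H$ is an edge or a triangle and a triangle is simultaneously complete and an odd cycle, so Lemma \ref{degree} can only be driven by a strict inequality in some list.

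For (\textbf{FIX}) I would observe, as in the earlier lemmas, that each block $B$ of $H$ is either a single edge or a $3$-face, and in either case $B$ is $L'$-colorable for every $(\deg_H+\delta_{G,4})\downarrow z$-assignment $L'$ and every fixed $z\in V(B)$; Lemma \ref{fix} then yields (\textbf{FIX}) for all of $H$. The substance is (\textbf{FORB}). Here I would fix a $(P_3+P_4)$-independent set $S$ with $|S|\le 2$ and an arbitrary $(\deg_H+\delta_{G,4}-1_S)$-assignment $L'$; the singleton case is immediate from (\textbf{FIX}). Because each configuration is small and of short diameter, I expect most pairs of its vertices to be joined by a $P_3$ or a $P_4$ and hence to be forbidden from both lying in $S$, so the first concrete task is to read off from each figure the short list of genuinely $(P_3+P_4)$-independent pairs. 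If such a pair contains a pendant $3$-neighbor inside the adjoined part, I would color the core by (\textbf{FIX}) and finish that vertex greedily. For each remaining admissible pair $S=\{a,b\}$ I would color $H[S]$ first, delete the two used colors from the lists of their neighbors to obtain $L^{\ast}$ on $H-S$, and apply Lemma \ref{degree}: the surplus at $u$ gives $|L^{\ast}(u)|>\deg_{H-S}(u)$ on its component, producing a coloring that extends greedily to any pendant vertices.

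The main obstacle I anticipate is exactly this (\textbf{FORB}) bookkeeping: after precoloring a tight pair $S$ the graph $H-S$ may fall apart, and Lemma \ref{degree} applies to a component only when that component either still contains the surplus $3$-vertex or is not a lone triangle or odd cycle. I would therefore need to check, configuration by configuration, that no admissible pair simultaneously strands the surplus vertex in one component and leaves an all-tight triangle block in another; where that threatens to occur I would reorder the coloring so that $u$ is colored last, or choose which endpoint of $S$ to color first so that the deleted colors relieve the tight block. Carrying this out for every configuration of Figure $4$ is routine but is where all the genuine case-work resides.
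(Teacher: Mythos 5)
Your overall template does match the paper's proof of Lemma \ref{local}: take $H$ to be an induced copy of the configuration, obtain (\textbf{FIX}) blockwise from Lemma \ref{fix}, dispose of singletons, and handle $|S|=2$ by coloring $H[S]$ first. But there is a genuine gap exactly where you place ``the genuine case-work'': you never determine which two-element $(P_3+P_4)$-independent sets actually occur in the configurations of Figure $4$, and you leave (\textbf{FORB}) resting on an unexecuted contingency plan (apply Lemma \ref{degree} to $H-S$, track which component retains the surplus $3$-vertex, reorder the coloring if a tight triangle gets stranded). The paper closes this point with a single structural observation that your proposal is missing: in every configuration of Figure $4$, any admissible set $S$ with $|S|=2$ induces a \emph{pendant edge} of $H$. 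Once this is noted, one first $L'$-colors $H[S]$ and then greedily $L'$-colors the remaining vertices in a suitable order; Thomassen's Lemma \ref{degree} is never invoked, no component analysis arises, and the distance-$\geq 4$ pairs you budget for simply do not occur. Since the pendant-edge fact is precisely what excludes the failure mode you yourself flag (an all-tight triangle block cut off after deleting $S$), omitting its verification means your write-up does not yet prove (\textbf{FORB}); as it stands it is a plan whose feasibility hinges on inspecting the figures, and that inspection is the entire content of the lemma.

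Two smaller deviations are worth noting. First, the paper takes $H$ isomorphic to the drawn configuration itself, whereas you augment $H$ with $3$-neighbors inside worst triangles in the style of Lemmas \ref{redu2}--\ref{redu4}; for Figure $4$ this augmentation is not part of the construction and would jeopardize the $9$-vertex bound if such neighbors existed. Second, your machinery is strictly heavier than needed here but not wrong: if the pendant-edge classification were carried out and some admissible pair failed to be adjacent, your ``color $S$, prune lists, apply Lemma \ref{degree} via the surplus vertex'' route is the standard fallback (it is exactly what the paper does in Lemmas \ref{redu2}--\ref{redu4}), so the defect is the missing verification, not the choice of tools.
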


\begin{proof}
Let $H$ be the graph isomorphic one of the configurations in Figure 4. Note that (\textbf{FIX}) is easily obtained by Lemma \ref{fix}.
It remains to verify that for all $\{P_3+P_4\}$-independent set $S$, $H$ is $L'$-colorable for any $(\deg_H+\delta-1_S)$-assignment $L'$. The case when $S$ is a singleton is easily implied by (\textbf{FIX}). Now we only consider $|S|=2$. In all cases, $S$ induces a pendant edge in $H$ and it follows that we can first $L'$-color $H[S]$, and then greedily $L'$-color the remaining vertices in $H$.
Hence, (\textbf{FORB}) holds.
\end{proof}

\begin{figure}[H]
\begin{center}
\includegraphics[scale=0.8]{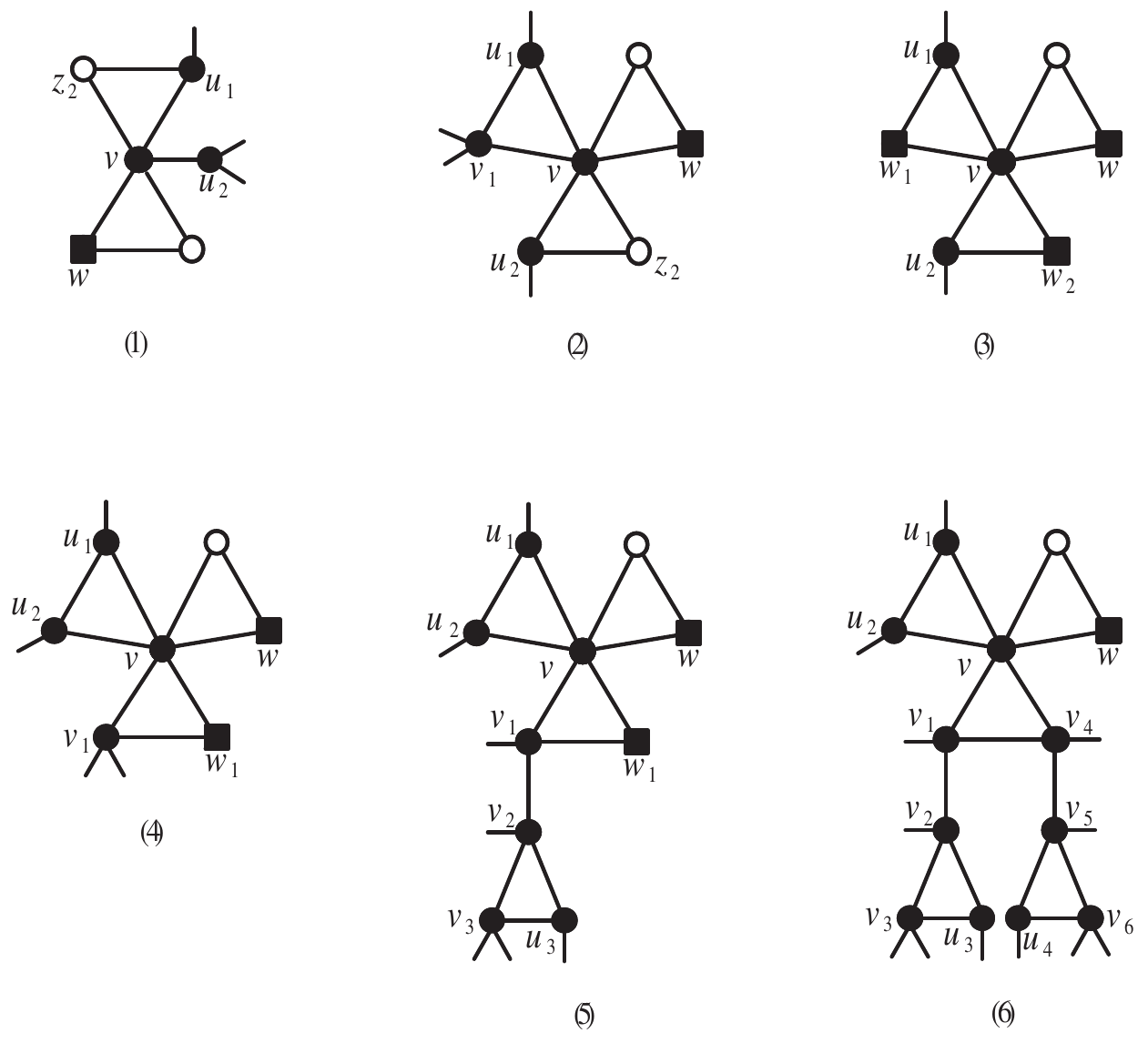}\\
{Figure 5: Situation in Lemma \ref{bad}}
\end{center}
\end{figure}
\begin{lemma}\label{bad}
Let $G$ be a planar graph satisfying any one of the following conditions (see Figure 5). Here $d(u_i)=3$, $d(v_i)=4$, $w_i$ is a bad $5^+$-vertex for all $i$, while $w$ is both bad and dangerous.
\begin{itemize}
\item[$(1)$] A $5$-vertex $v$ with $f_3(v)=2$, and $vu_2$ is a pendant edge, $vu_1x$ is a $3$-face, where $z_1$ is either a $4$-vertex or bad;
\item[$(2)$] A $6$-vertex $v$ with $f_3(v)=3$, and $vu_1v_1$, $vu_2y$ are $3$-faces, where $z_2$ is either a $4$-vertex or bad;
\item[$(3)$] A $6$-vertex $v$ with $f_3(v)=3$, and $f_{3b}(v)=2$;
\item[$(4)$] A $6$-vertex $v$ with $f_3(v)=3$, and $f_{3,3}(v)=1$, $f_{4b}(v)=1$;
\item[$(5)$] A $6$-vertex $v$ with $f_3(v)=3$, and $f_{4b}(v)=1$, two paths $v_1v_2v_3$, $v_2u_3v_3$ ($v_1$, $v_2$ may be the same vertex);
\item[$(6)$] A $6$-vertex $v$ with $f_3(v)=3$, and $f_{3,3}(v)=1$, four paths $vv_1v_2v_3$, $v_2u_3v_3$, $vv_4v_5v_6$, $v_5u_4v_6$ ($v_1$, $v_2$ may be the same vertex, $v_4$, $v_5$ may be the same vertex).
\end{itemize}
Then $G$ contains a $(P_3+P_4, 4)$-reducible induced subgraph with at most $37$ vertices.
\end{lemma}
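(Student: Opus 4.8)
The plan is to follow the same template used in Lemmas \ref{4redu60}--\ref{redu4}. In each of the six configurations I would let $A$ be the set of vertices explicitly named in the statement (the central $5^+$-vertex $v$, the $u_i$, $v_i$, $w_i$, $w$, together with all neighbors of each bad vertex that lie on a worse or worst $3$-face), and let $B$ be the remaining $3$-neighbors sitting inside a worst triangle incident with one of the listed high-degree vertices. Setting $H = G[A \cup B]$, the $\{C_4, C_5\}$-freeness forces every block of $H$ to be a single edge or a triangle and rules out any edges other than the displayed ones; a crude count (one central vertex, at most three incident $3$-faces, and at most $2d(w)-1 \le 23$ vertices contributed by each bad vertex together with the attached nice paths) then gives $|V(H)| \le 37$.

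Next I would establish (\textbf{FIX}). Since each block of $H$ is an edge or a triangle, and an edge or a triangle is $L$-colorable for every $((\deg_H + \delta_{G,4})\downarrow z)$-assignment $L$ — the included high-degree vertices carry enough $H$-degree that $\deg_H + \delta_{G,4} \ge 2$ throughout, with strict slack at every $3$-vertex — Lemma \ref{fix} yields (\textbf{FIX}) directly.

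The substance is (\textbf{FORB}). As $k = 4$, it suffices to treat $(P_3+P_4)$-independent sets $S$ with $|S| \le 2$. Singletons reduce to (\textbf{FIX}), and any pair meeting $B$ is handled by first $L'$-coloring $H[A]$ (via (\textbf{FIX}) or Lemma \ref{degree}) and then extending greedily into $B$, since each vertex of $B$ is a $3$-vertex, hence has a spare color when colored last. The remaining task is to enumerate, in each configuration, the pairs $S \subseteq A$ that are genuinely $(P_3+P_4)$-independent — because almost every pair of named vertices is joined by a path of length at most three, only a handful survive — and, for each such $S$, to first color $H[S]$, pass to the reduced assignment $L^*(z) = L'(z) \setminus \phi(S \cap N_H(z))$ on $H - S$, check that $|L^*(z)| \ge \deg_{H-S}(z)$ everywhere, and invoke Lemma \ref{degree} (using the strict inequality $|L^*(u)| > \deg_{H-S}(u)$ at a remaining $3$-vertex $u$, or the fact that some block is neither complete nor an odd cycle) to color $A \setminus S$, finishing greedily on $B$.

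The main obstacle will be the bad-and-dangerous vertex $w$: its many incident $3$-faces create numerous short paths, so its interaction with the central structure governs both which pairs $S$ are $(P_3+P_4)$-independent and whether, after deleting and recoloring $S$, the residual lists still dominate the degrees. This is precisely where the hypotheses bite — $w$ being bad and dangerous pins down both $f_3(w)$ and $f_{3,3}(w)$, hence exactly how its neighbors sit on worst and worse faces, and this is what guarantees that the $2$-connected component left to be colored is neither a clique nor an odd cycle, so that Lemma \ref{degree} still applies after the list reduction. Carrying this verification through configurations $(3)$--$(6)$, where bad vertices and the attached nice paths (the paths $v_1v_2v_3$, $v_2u_3v_3$, and their analogues) coexist, is the delicate part; configurations $(1)$ and $(2)$, with their single exceptional pairs, are routine by comparison.
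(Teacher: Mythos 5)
Your proposal follows the right template (the $A\cup B$ construction, Lemma \ref{fix} for (\textbf{FIX}), enumeration of independent pairs for (\textbf{FORB})), but it stops short of a proof at exactly the points where the work lies, and it misses the step that makes the lemma tractable. The paper's first move is to dispose of the parity of $d(w)$: if $d(w)$ is even, it argues that $f_{3,3}(w)=\frac{d(w)-2}{2}$, so Lemma \ref{4redu60}(3) already produces a $(P_3+P_4,4)$-reducible subgraph on at most $37$ vertices, and hence one may assume $d(w)$ is odd throughout. For odd $d(w)$, badness ($\lfloor\frac{d(w)-2}{2}\rfloor=\frac{d(w)-3}{2}$ faces of type $(3,4^-,w)$) together with dangerousness ($f_{3,3}(w)=\frac{d(w)-3}{2}$) forces \emph{every} $(3,4^-)$-face at $w$ to be a worst triangle, so $N^{*}(w)$ consists of $d(w)-3\le 8$ vertices, all of degree $3$; this is precisely what yields the list sizes the paper verifies ($|L(w)|=2$, $|L(y)|=3$ for $y\in N^{*}(w)$, $|L(v_3)|=|L(v_6)|=2$ in configuration (6), etc.) and what makes the bound $37$ correct. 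Your count does not: configurations (3) and (4) involve up to three bad vertices ($w$, $w_1$, $w_2$), and at ``$2d(w)-1\le 23$ vertices contributed by each bad vertex'' you land well above $37$. Moreover, without the parity reduction you would have to handle a worse $(3,4,w)$-face at $w$ in the even case, whose $4$-vertex enters $H$ with a list of size $2$ and generates independence and ordering cases that your outline never examines.

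Second, the substance of (\textbf{FORB}) is deferred rather than carried out: you explicitly flag the verification through configurations (3)--(6) as ``the delicate part'' and leave it there, but that enumeration and check \emph{is} the content of the lemma. For comparison, the paper's execution is actually simpler than your Lemma \ref{degree} route: once the list sizes are pinned down as above, it $L'$-colors $H[S]$ first and then colors the remaining vertices greedily in a suitable order, with no list-reduction step and no appeal to Lemma \ref{degree}. A related slip in your outline: for a pair $S$ meeting $B$, the vertex $s\in S\cap B$ satisfies $|L'(s)|=\deg_H(s)+\delta_{G,4}(s)-1=\deg_H(s)$, so it does \emph{not} ``have a spare color when colored last''; it must be colored first (which is exactly why the paper's ordering starts with $H[S]$), and your proposed order --- color $H[A]$, then extend into $B$ --- can fail at $s$.
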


\begin{proof}
We denote by $N^{*}(x)$ the set of neighbors of each bad vertex $x$ lying on the worse or worst $3$-faces. In all cases, if $d(w)$ is even, then by the assumption that $f_{3,3}(w)=\frac{d(w)-2}{2}$, Lemma \ref{4redu60}(3) directly implies a $(P_3+P_4, 4)$-reducible induced subgraph on at most $37$ vertices. Hence we may assume that $d(w)$ is odd.

Now consider the cases (1)-(4), let $H$ be the subgraph induced by $v, w$, all $u_i$, $v_i$, $w_i$, $z_i$ and all vertices in $N^{*}(x)$ for each bad vertex $x\in \{w,w_1,w_2, (z_i)\}$. Then we claim that $H$ is a $(P_3+P_4, 4)$-reducible induced subgraph. In fact, for any $(\deg_{H}+\delta_{G,4})$-assignment $L$ of $H$, we have $|L(w)|=2,|L(v)|\geq3$ and $|L(w_i)|\geq3, |L(v_i)|=2$ for each $i\in [2]$. In addition, $|L(y)|=3$ for all $y\in N^{*}(w)$, $|L(u_2)|=2$ in configuration (1), while $|L(u_i)|=3$ for each $i\in [2]$ in configurations (2)-(4). It follows from Lemma \ref{fix} that (\textbf{FIX}) holds. It remains to verify that for all $\{P_3+P_4\}$-independent set $S$, $H$ is $L'$-colorable for any $(\deg_H+\delta-1_S)$-assignment $L'$. The case when $S$ is a singleton is easily implied by (\textbf{FIX}). Now we only consider $|S|=2$. In all cases, we can first $L'$-color $H[S]$, and then greedily $L'$-color the remaining vertices in $H$. Hence, (\textbf{FORB}) holds.

Consider the configuration (5)-(6). Here, we just consider (6) since (5) can be solved by the same argument. Let $H$ be the subgraph induced by $v, w$, all $u_i,v_i$ and all vertices in $N^{*}(w)$. Now we claim that $H$ is a $(P_3+P_4, 4)$-reducible induced subgraph. In fact, for any $(\deg_{H}+\delta_{G,4})$-assignment $L$ of $H$, we have $|L(x)|=2$ for each $x\in \{w,v_3,v_6\}$, $|L(y)|=3$ for each $y\in \{v,u_1,u_2,u_3,u_4,v_1,v_2,v_4,v_5\}\cup N^{*}(w)$. It follows from Lemma \ref{fix} that (\textbf{FIX}) holds. It remains to verify that for all $\{P_3+P_4\}$-independent set $S$, $H$ is $L'$-colorable for any $(\deg_H+\delta-1_S)$-assignment $L'$. By similar arguments, in all cases, we can first $L'$-color $H[S]$, and then greedily $L'$-color the remaining vertices in $H$. Hence, (\textbf{FORB}) holds.
\end{proof}

%\begin{lemma}\label{34edge}
%Let $uv$ be an edge of $G$ such that $d(u)=3$ and $d(v)=4$. If there is a vertex $u_1$ and a $3$-face $v_1u_2v_2$ such that $d(u_1)=d(u_2)=3$, %$d(v_1)=d(v_2)=4$ and $uu_1$, $vv_1\in E(G)$. Then $G$ contains a $(P_3+P_4, 4)$-reducible induced subgraph with at most $6$ vertices.
%\end{lemma}
%\begin{proof}
%Let $H$ be the subgraph induced by $\{u, v, u_1, u_2, v_1, v_2\}$, consider any vertex $z\in V(H)$ and a $(\deg_{H}+\delta_{G,4})\downarrow z$-assignment $L$ for $H$, $H-z$ is $L$-colorable by Lemma \ref{degree}, implying (\textbf{FIX}). Let $L'$ be a $(\deg_H+\delta-1_S)$-assignment for $H$, If $S=\{u, u_1\}$ or $S=\{u, v\}$ or $S=\{v, v_1\}$, it is easy to greedily $L'$-color $H$. If $S=\{u_1, z\}$ where $z\in\{u_2, v_2\}$, we first consider the case $z=u_2$, then we can greedily $L'$-color $u_1, u_2, v_2, v_1, v, u$ in order. On the other hand, $z=v_2$, then we give a color to $v_2$ from $L^{'}(v_2)$ arbitrarily, and greedily $L'$-color the remaining vertices. Consequently, (\textbf{FORB}) admits. Hence, $H$ is $(P_3+P_4, 4)$-reducible.

%\end{proof}

\begin{lemma}\label{34edge}
Let $uv$ be an edge of $G$ such that $d(u)=3$ and $d(v)=4$. If there exists a vice vertex $v_2$ such that both $vv_1v_2$ and $v_2v_3u_1$ are $3$-faces incident with $v_2$ with $d(u_1)=3$ and $d(v_i)=4$ for each $i\in \{1, 2, 3\}$. Then $G$ contains a $(P_3+P_4, 4)$-reducible induced subgraph with at most $6$ vertices.
\end{lemma}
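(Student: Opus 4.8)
The plan is to take $H$ to be the subgraph of $G$ induced by the six named vertices $\{u,v,v_1,v_2,v_3,u_1\}$, which immediately gives $|V(H)|\le 6$, and to show that $H$ is $(P_3+P_4,4)$-reducible. The first task is to pin down the edge set of $H$ exactly. The prescribed edges are $uv$ together with the two triangles $vv_1v_2$ and $v_2v_3u_1$; I would use $\{C_4,C_5\}$-freeness to rule out every possible chord (for instance an edge $uv_2$ would close the $4$-cycle $u v_2 v_1 v$, an edge $v_1v_3$ would close $v_1 v_3 v_2 v$, an edge $uv_1$ would close $u v_1 v_2 v$, and so on), and likewise to check that the six vertices are pairwise distinct (a coincidence $v=v_3$ would produce the $4$-cycle $v_1 v_2 u_1 v$, and $u=u_1$ the $4$-cycle $u v v_1 v_2$). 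Since $v_2$ is a vice $4$-vertex, its two incident $3$-faces use all four of its neighbours, so $v_2$ has no further edges. This identifies $H$ as two triangles sharing the vice vertex $v_2$, with a pendant vertex $u$ hanging off $v$. Writing $\delta_{G,4}(x)=4-\deg_G(x)$, the target function $\deg_H+\delta_{G,4}$ then takes the values $u\mapsto 2$, $v\mapsto 3$, $v_1\mapsto 2$, $v_2\mapsto 4$, $v_3\mapsto 2$, $u_1\mapsto 3$.

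For (\textbf{FIX}) I would invoke Lemma \ref{fix}. The blocks of $H$ are the bridge $uv$ and the two triangles $vv_1v_2$ and $v_2v_3u_1$. For each block and each choice of a fixed vertex $w$ in it, the $(\deg_H+\delta_{G,4})\downarrow w$-assignment leaves that block either an edge or a triangle in which, after colouring $w$ and deleting it, the remaining edge still has an endpoint carrying a list of size at least two (the list of $v_2$ or $u_1$ never drops below $2$). Hence each block is colourable, either directly by Lemma \ref{degree} or by the one-line greedy argument of colouring the fixed vertex and then the remaining edge, and (\textbf{FIX}) follows from Lemma \ref{fix}.

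For (\textbf{FORB}) I would first record that a singleton $S=\{s\}$ is handled for free: since every value of $\deg_H+\delta_{G,4}$ is at least $2$, any $(\deg_H+\delta_{G,4}-1_S)$-assignment is in particular a $((\deg_H+\delta_{G,4})\downarrow s)$-assignment and is therefore covered by (\textbf{FIX}). The crux is $|S|=2$, and here the key observation is that $\{u,v\}$ is the \emph{only} $(P_3+P_4)$-independent pair in $H$: every other pair of vertices is joined inside $H$ by a path of length $2$ or $3$ (each such pair either lies on a common triangle or is linked through $v_2$), whereas $u$ is pendant and so has neither a $P_3$ nor a $P_4$ to $v$. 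For $S=\{u,v\}$ I would colour the pendant edge $uv$ first and then colour $v_1,v_2,v_3,u_1$ greedily in that order; with the remaining list sizes $2,4,2,3$ each of these vertices has at least one free colour when reached, completing the $L'$-coloring and establishing (\textbf{FORB}).

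Since all coloring steps are routine greedy or block colorings, the only genuine obstacle is the combinatorial bookkeeping at the start: verifying via $\{C_4,C_5\}$-freeness that no chord and no vertex-coincidence survives, and that $\{u,v\}$ is the unique $(P_3+P_4)$-independent $2$-set. Once the precise structure of $H$ is fixed, both (\textbf{FIX}) and (\textbf{FORB}) are immediate, and we conclude that $H$ is a $(P_3+P_4,4)$-reducible induced subgraph on at most $6$ vertices.
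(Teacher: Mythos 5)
Your proposal is correct and takes essentially the same approach as the paper: the same induced subgraph $H$ on $\{u,v,v_1,v_2,v_3,u_1\}$, (\textbf{FIX}) via the block/degree lemmas (Lemmas \ref{fix} and \ref{degree}), and (\textbf{FORB}) by observing that the unique $(P_3+P_4)$-independent $2$-set is the pendant edge $\{u,v\}$, which is colored first before greedily completing the coloring. Your explicit chord-and-coincidence check via $\{C_4,C_5\}$-freeness is detail the paper leaves implicit (its proof simply says $S$ induces a pendant edge), but the underlying argument is identical.
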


\begin{proof}
Let $H$ be the subgraph induced by $\{u, v, u_1, v_1, v_2, v_3\}$, consider any vertex $z\in V(H)$ and a $(\deg_{H}+\delta_{G,4})\downarrow z$-assignment $L$ for $H$, $H-z$ is $L$-colorable by Lemma \ref{degree}, implying (\textbf{FIX}). Now we only consider $|S|=2$. Since $S$ induces a pendant edge in $H$ and it follows that we can first $L'$-color $H[S]$, and then greedily $L'$-color the remaining vertices in $H$. Consequently, (\textbf{FORB}) admits. Hence, $H$ is $(P_3+P_4, 4)$-reducible.
\end{proof}

\section{Discharging Process}
\medskip
To prove Theorem \ref{thm3}, the main idea is to apply Lemma \ref{C4P4}. Let $G$ be a $\{C_4,C_5\}$-free planar graph. If $G$ satisfies the assumption in Lemma \ref{C4P4}, then we are done. Now we may assume that there exists $Z_0\subseteq V(G)$ such that $G[Z_0]$ does not contain any $(P_3+P_4, 4)$-reducible subgraph on at most $100$ vertices. Let $G'=G[Z_0]$. Since $G'$ is also a plane graph, by Euler's Formula, we obtain
\begin{equation*}
\sum_{v\in Z_0}(d(v)-2)+\sum_{f\in F(G')}(-2)=-4.
\end{equation*}

Now we redistribute the charges of all vertices and faces as follows, where we use $c(x\rightarrow y)$ to denote the charge sent from an element $x$ to another element $y$. Let $f$ be a face of $G'$ and $v_1,v_2,v_3$ be three vertices on $f$.
\begin{description}
\item[$\bf R0.$] If $f$ is a $3$-face and $d(v_1)\geq13$, then $c(v_1\rightarrow f)=\frac{4}{3}, c(v_2\rightarrow f)=c(v_3\rightarrow f)=\frac{1}{3}$;
\item[$\bf R1.$] For each $v\in V(f)$, if either $d(v)=3$, or $d(v)\geq4$ and $d(f)\geq6$, then $c(v\rightarrow f)=\frac{1}{3}$;
\item[$\bf R2.$] Let $f=v_1v_2v_3$ with $d(v_1)=4$.
\begin{description}
\item[$\bf R2.1.$] If $d(v_2)=4$, $d(v_3)=3$ and $v_1$ is not vice, then $c(v_1\rightarrow f)=1$, $c(v_2\rightarrow f)=\frac{2}{3}$;
\item[$\bf R2.2.$] If $d(v_2)=4$, $5\leq d(v_3)\leq12$ and $v_i$ is bad, $v_{3-i}$ is not vice for some $i\in\{1,2\}$, then $c(v_i\rightarrow f)=\frac{1}{2}$, $c(v_{3-i}\rightarrow f)=\frac{5}{6}$;
\item[$\bf R2.3.$] Otherwise, let $c(v_1\rightarrow f)=\frac{2}{3}$.
\end{description}
\item[$\bf R3.$] If $f=v_1v_2v_3$ with $3\leq d(v_1)\leq d(v_2)\leq4<d(v_3)\leq12$, then $c(v_3\rightarrow f)=2-c(v_1\rightarrow f)-c(v_2\rightarrow f)$.
\item[$\bf R4.$] Let $f=v_1v_2v_3$ such that $d(v_1)=3$, $5\leq d(v_2)\leq12$ and $5\leq d(v_3)\leq12$.
\begin{description}
\item[$\bf R4.1.$] If there exists a bad vertex $v_i\in\{v_2, v_3\}$, then $c(v_i\rightarrow f)=\frac{2}{3}$, $c(v_{5-i}\rightarrow f)=1$;
\item[$\bf R4.2.$] If there is no bad vertices on $f$, then $c(v_2\rightarrow f)=c(v_3\rightarrow f)=\frac{5}{6}$.
\end{description}
\item[$\bf R5.$] Let $f=v_1v_2v_3$ such that $d(v_1)=4$, $5\leq d(v_2)\leq12$, and $5\leq d(v_3)\leq12$.
\begin{description}
\item[$\bf R5.1.$] If there exists a bad vertex $v_i\in\{v_2, v_3\}$, then $c(v_i\rightarrow f)=\frac{1}{2}$, $c(v_{5-i}\rightarrow f)=\frac{5}{6}$;
\item[$\bf R5.2.$] If neither $v_2$ nor $v_3$ is bad, then $c(v_2\rightarrow f)=c(v_3\rightarrow f)=\frac{2}{3}$.
\end{description}
\item[$\bf R6.$] If $f=v_1v_2v_3$ such that $5\leq d(v_1)\leq d(v_2)\leq d(v_3)\leq12$,
    \begin{description}
        \item[$\bf R6.1.$] If there exists exactly one vertex which is both bad and dangerous, say $v_1$, then $c(v_1\rightarrow f)=\frac{1}{2}$, $c(v_2\rightarrow f)=c(v_3\rightarrow f)=\frac{3}{4}$;
        \item[$\bf R6.2.$] If there are two vertices, say $v_1,v_2$, where each $v_i$ is bad and dangerous, then $c(v_1\rightarrow f)=c(v_2\rightarrow f)=\frac{1}{2}$, $c(v_3\rightarrow f)=1$.
        \item[$\bf R6.3.$] Otherwise, let $c(v_i\rightarrow f)=\frac{2}{3}$ for each $i\in\{1,2,3\}$.
    \end{description}

\end{description}

Let $ch_1(x)$ be the new charge for each $x\in V(G')\cup F(G')$ after applying rules R1-R6. It is easy to see that for each vertex $v\in V(G')$, $f_3(v)\leq \lfloor\frac{d(v)}{2}\rfloor$. Since $G'$ does not contain any $(P_3+P_4, 4)$-reducible subgraph, Lemma \ref{CP} and Lemma \ref{4redu60} immediately imply the following corollary.
\begin{corollary}\label{CPC}
Each $3^+$-vertex $v\in V(G')$ satisfies the following:
\begin{description}
  \item[(1)] $f_{3,3}(v)+f_{3,4}(v)+f_{3b}(v)+f_{bb}(v)\leq\lfloor\frac{d(v)}{2}\rfloor-1$;
  \item[(2)] If $v$ is dangerous, then $f_{3b}(v)=f_{bb}(v)=0$. Furthermore, if $d(v)$ is odd, then $f_{3,4}(v)=f_{4b}(v)=0$
\end{description}
\end{corollary}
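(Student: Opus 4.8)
The plan is to prove both parts by contradiction: a violation of (1) or (2) should expose one of the forbidden reducible subgraphs of Lemma~\ref{CP} or Lemma~\ref{4redu60} inside $G'=G[Z_0]$, contradicting its defining property. The starting point is purely local. Since $G$ (hence $G'$) is $C_4$-free, each edge incident with a vertex $v$ lies on at most one triangular face, as two triangular faces sharing an edge at $v$ would close a $4$-cycle through their opposite vertices; since every $3$-face at $v$ consumes two of the $d(v)$ edges at $v$, we get $f_3(v)\le\lfloor d(v)/2\rfloor$. The four quantities appearing in (1) count $3$-faces of prescribed types at $v$, so their sum is trivially at most $f_3(v)\le\lfloor d(v)/2\rfloor$; the whole content of (1) is to exclude equality.

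For part (1) I would assume $f_{3,3}(v)+f_{3,4}(v)+f_{3b}(v)+f_{bb}(v)=\lfloor d(v)/2\rfloor$, so that every triangular face at $v$ is worst, worse, $(3,b,v)$, or $(b,b,v)$. Each such face is literally a $v$-stalk from the set $A$: a worst face is a copy of (c), a worse face is (d), a $(3,b,v)$-face is (f), and a $(b,b,v)$-face is (h). Letting $H$ be the induced subgraph on the union of these $\lfloor d(v)/2\rfloor$ stalks, the edge-disjointness at $v$ gives $\deg_H(v)=2\lfloor d(v)/2\rfloor\ge d(v)-1$, whence the induced $(\deg_H+\delta_{G,4})$-assignment satisfies $|L(v)|=\deg_H(v)+(4-d(v))\ge 3$. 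Lemma~\ref{CP}(1), applied to this combination of elements of $A$, then yields a $(P_3+P_4,4)$-reducible induced subgraph, a contradiction. This settles (1) in the main regime $5\le d(v)\le 12$ where Lemma~\ref{CP} applies; the degrees $d(v)\in\{3,4\}$ and $d(v)\ge 13$, outside the scope of Lemma~\ref{CP}, are handled separately by the small reducible configurations of Lemmas~\ref{local}, \ref{34edge}, \ref{4redu60}(1) and by elementary counting.

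For part (2) assume $v$ is dangerous, so $f_{3,3}(v)=\lfloor(d(v)-3)/2\rfloor$. When $d(v)$ is odd this equals $(d(v)-3)/2$, and I would adjoin to the $(d(v)-3)/2$ worst stalks (copies of (c)) a single extra special stalk witnessing a would-be nonzero count: a (d) for $f_{3,4}(v)$, an (f) for $f_{3b}(v)$, an (h) for $f_{bb}(v)$, or an (i) for $f_{4b}(v)$. In every case $H$ now satisfies $\deg_H(v)=d(v)-1$, so $|L(v)|=3$, and Lemma~\ref{CP}(1) applies---through the $A$-combination for (d), (f), (h), and through the ``combination of $B$ with one copy of (i)'' clause for (i)---producing the forbidden subgraph. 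This gives $f_{3b}(v)=f_{bb}(v)=0$ together with the odd-degree addendum $f_{3,4}(v)=f_{4b}(v)=0$ at once. When $d(v)$ is even, $f_{bb}(v)=0$ is immediate from Lemma~\ref{4redu60}(4), and $f_{3b}(v)=0$ from Lemma~\ref{4redu60}(2) in the case $d(v)=6$.

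The one genuinely delicate step is $f_{3b}(v)=0$ for the even degrees $d(v)\in\{8,10,12\}$. Here $f_{3,3}(v)=(d(v)-4)/2$, so the worst stalks together with a single (f) give only $\deg_H(v)=d(v)-2$ and hence $|L(v)|=2$, one short of the threshold of Lemma~\ref{CP}(1); the generic stalk mechanism does not fire. The resolution must exploit the forced structure of the bad neighbour $w$ on the $(3,b,v)$-face. Being bad, $w$ carries $\lfloor d(w)/2\rfloor$ incident $3$-faces and a full complement of degree-$3$ and degree-$4$ neighbours, all of which the stalk (f) already draws into $H$. One then argues either that this configuration forces an additional degree-$3$ neighbour of $v$ on a free edge, supplying an extra stalk (a) that raises $|L(v)|$ to $3$, or that the explicit induced subgraph is directly $(P_3+P_4,4)$-reducible by a block-by-block colouring exactly as in the proof of Lemma~\ref{4redu60}. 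Carrying this out uniformly over $d(v)\in\{8,10,12\}$ is the only part that is not a one-line appeal to the two main lemmas.
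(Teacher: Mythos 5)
Your reconstruction of part (1) for $5\le d(v)\le12$ and of part (2) for odd-degree dangerous vertices is sound and is exactly the derivation the paper intends: the paper itself offers no argument beyond asserting that Lemma~\ref{CP} and Lemma~\ref{4redu60} ``immediately imply'' the corollary, so your stalk-counting (equality in (1) forces $\lfloor d(v)/2\rfloor$ faces of types (c), (d), (f), (h), whence $\deg_H(v)=2\lfloor d(v)/2\rfloor\ge d(v)-1$ and $|L(v)|\ge3$; for odd dangerous degree, $(d(v)-3)/2$ copies of (c) plus one of (d), (f), (h), or (i) give $|L(v)|=3$, the (i) case going through the ``$B$ plus one copy of (i)'' clause) supplies more detail than the paper does. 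Two caveats you wave at rather than settle: Lemma~\ref{CP} only produces reducible subgraphs on at most $138$ vertices while the discharging setup assumes $G'$ free of reducible subgraphs on at most $100$ vertices (a defect of the paper, not of your argument), and the corollary's ``each $3^+$-vertex'' formally includes $d(v)\ge13$, a range neither Lemma~\ref{CP} (which requires $5\le d(v)\le12$) nor Lemma~\ref{4redu60} (whose configurations all have $d(v)\le12$) touches; ``elementary counting'' gives only $f_3(v)\le\lfloor d(v)/2\rfloor$, not the claimed $\lfloor d(v)/2\rfloor-1$, so that case is not actually handled by what you cite.

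The genuine gap is the one you flagged and then left open: $f_{3b}(v)=0$ for dangerous $v$ of even degree $d(v)\in\{8,10,12\}$. As you computed, $(d(v)-4)/2$ worst stalks plus one (f) give $\deg_H(v)=d(v)-2$, hence $|L(v)|=2$, one short of Lemma~\ref{CP}(1); Lemma~\ref{4redu60}(2) covers only $d(v)=6$, and configurations (5)--(6) have no analogue at higher even degrees. Neither of your two proposed escapes works as stated: nothing forces a $3$-vertex or any stalk on the two free edges at $v$ --- both remaining neighbours may, for instance, be $13^+$-vertices, in which case no combination admissible in Lemma~\ref{CP} reaches $|L(v)|\ge3$ --- and ``the explicit induced subgraph is directly reducible by a block-by-block colouring'' is precisely the missing verification, not an argument. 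So your proposal is incomplete at exactly this point; notably, the paper's one-line proof also gives nothing here, so what you have isolated is a lacuna in the paper rather than only in your write-up (one can check that the downstream uses of Corollary~\ref{CPC}(2), e.g.\ in Claim~\ref{c2} and in the observations inside the proof of Claim~\ref{c4}, in fact only need $f_{bb}(v)=0$ or the odd-degree statements, so the honest repair is either to construct and verify a new $(P_3+P_4,4)$-reducible configuration for even $d(v)\in\{8,10,12\}$ with $f_{3,3}(v)=(d(v)-4)/2$ and $f_{3b}(v)=1$, or to weaken the corollary to claim $f_{3b}(v)=0$ only for $d(v)$ odd or $d(v)=6$ and recheck its applications).
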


Moreover, we consider all $5^+$-vertices that satisfy certain properties as follows.
\begin{claim}\label{c1}
For each vertex $v$ of $G'$. If $d(v)\geq5$ and $v$ is not dangerous, then $ch_1(v)\geq\frac{1}{6}$. %\textcolor[rgb]{0.00,0.00,1.00}{If $v$ is not bad and $d(v)\geq7$, then $ch_1(v)\geq\frac{1}{6}$.(may be applied in the final analysis)}
\end{claim}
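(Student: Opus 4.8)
The plan is to evaluate $ch_1(v)=(d(v)-2)-\sum_{f\ni v}c(v\to f)$ and bound the outgoing charge from above. Since $G'$ is $\{C_4,C_5\}$-free, no two $3$-faces are consecutive around $v$, so at most $\lfloor d(v)/2\rfloor$ of the $d(v)$ incident faces are $3$-faces and every remaining incident face is a $6^+$-face receiving exactly $\tfrac13$ by R1. Writing the charge sent to a $3$-face $f$ as $\tfrac13+(c(v\to f)-\tfrac13)$, this yields the clean identity $ch_1(v)=\tfrac23 d(v)-2-\sum_{3\text{-faces}}\bigl(c(v\to f)-\tfrac13\bigr)$, so the whole task reduces to proving that the total \emph{excess} obeys $\sum_{3\text{-faces}}\bigl(c(v\to f)-\tfrac13\bigr)\le \tfrac23 d(v)-\tfrac{13}{6}$. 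I would dispose of $d(v)\ge 13$ first: there only R0 and R1 apply, each $3$-face costs at most $\tfrac43$, and the crude bound $\tfrac23 d(v)-2-\lfloor d(v)/2\rfloor$ already exceeds $\tfrac16$.

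The core of the argument, for $5\le d(v)\le 12$, is a single per-face cost table obtained by running through all subcases of R0 and R3--R6 against the definitions of \emph{bad}, \emph{vice}, and \emph{dangerous}. The outcome I expect is: a worst $(3,3,v)$-face has excess at most $1$; every other $3$-face counted by $m:=f_{3,3}(v)+f_{3,4}(v)+f_{3b}(v)+f_{bb}(v)$ (the worse faces, the $f_{3b}$-faces, and the $f_{bb}$-faces, including the R6.2-faces where $v$ is the non-dangerous third vertex and sends $1$) has excess at most $\tfrac23$; and every $3$-face \emph{not} counted by $m$ costs at most $\tfrac56$, hence has excess at most $\tfrac12$. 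The last point is the crucial bookkeeping step, since it shows that the intermediate-cost faces ($f_{4b}$-faces and $(3,\text{non-bad }5^+,v)$-faces at $\tfrac56$, and R6.1-faces at $\tfrac34$), which are \emph{not} captured by Corollary \ref{CPC}(1), never exceed $\tfrac12$ in excess. Summing over the three classes gives
\[
\sum_{3\text{-faces}}\bigl(c(v\to f)-\tfrac13\bigr)\ \le\ \tfrac13\,f_{3,3}(v)+\tfrac16\,m+\tfrac12\,f_3(v).
\]

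It then remains to bound the three quantities. We have $f_3(v)\le\lfloor d(v)/2\rfloor$ and $m\le\lfloor d(v)/2\rfloor-1$ by Corollary \ref{CPC}(1). The decisive input is a \emph{sharpened} bound on $f_{3,3}(v)$ coming from the hypothesis that $v$ is not dangerous: for odd $d(v)$, Corollary \ref{CPC}(1) combined with non-dangerousness (dangerous would mean $f_{3,3}(v)=\lfloor\tfrac{d(v)-3}{2}\rfloor$) forces $f_{3,3}(v)\le \tfrac{d(v)-5}{2}$; for even $d(v)=2t$, Lemma \ref{4redu60}(3) forbids $f_{3,3}(v)=t-1$, and excluding the dangerous value $t-2$ forces $f_{3,3}(v)\le t-3$. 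Substituting these into the displayed inequality and simplifying, one checks that the required bound holds for every $5\le d(v)\le12$ (with equality, giving exactly $ch_1(v)=\tfrac16$, at the tight endpoints such as $d(v)\in\{5,6\}$).

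The main obstacle is the verification of the cost table: one must confirm, case by case through R0 and R3--R6, that no face outside $m$ ever costs more than $\tfrac56$ and that no face in $m$ other than a worst face costs more than $1$, paying particular attention to R2's interaction with R3 on poor and worse faces (via vice vertices) and to R6.1/R6.2, where a non-dangerous $v$ is forced to subsidize its dangerous neighbours by sending $\tfrac34$ or $1$. The second delicate point is that the extremal value of $f_{3,3}(v)$ permitted by reducibility coincides precisely with the dangerous threshold, so that excluding dangerous vertices gains exactly the one unit needed to close the otherwise-violated small even degrees $d(v)\in\{6,8\}$; getting this alignment right, rather than merely invoking the raw form of Corollary \ref{CPC}, is where the non-dangerous hypothesis is genuinely used.
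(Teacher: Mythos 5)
Your proposal is correct and follows essentially the same route as the paper's proof: both rest on the identical three ingredients, namely the per-face cost classification (worst faces cost $\tfrac43$, the faces counted by $f_{3,3}+f_{3,4}+f_{3b}+f_{bb}$ cost at most $1$, all remaining $3$-faces at most $\tfrac56$), Corollary \ref{CPC}(1), and the sharpened bound $f_{3,3}(v)\leq\max\{0,\frac{d(v)+1}{2}-3\}$ obtained by combining Lemma \ref{4redu60}(3) with the exclusion of the dangerous threshold value. The only difference is presentational: you package the computation as a single uniform linear inequality in $f_{3,3}(v)$, $m$, $f_3(v)$ (which incidentally handles both parities and $d(v)\geq13$ cleanly, with the same tight cases $d(v)\in\{5,6\}$), where the paper argues separately for $d(v)=5$, $d(v)=6$, and $d(v)\geq7$.
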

\begin{proof}
By Corollary \ref{CPC}(1), $f_{3,3}(v)+f_{3,4}(v)+f_{3b}(v)+f_{bb}(v)\leq\lfloor\frac{d(v)}{2}\rfloor-1$. %Since $v$ is not bad, from rules R1-R6, we can easily observe that \textcolor[rgb]{1.00,0.00,0.00}{when $d(v)=5$, $ch_1(v)\geq5-2-1-\frac{5}{6}-\frac{3}{3}\geq\frac{1}{6}$. When $d(v)=6$, we observe that if $f_{3,3}(v)=1$, then $f_{3b}(v)=f_{3,4}(v)=f_{bb}(v)=0$. Thus $ch_1(v)\geq6-2-\max\{\frac{4}{3}+\frac{5}{6}+\frac{5}{6}+\frac{3}{3},1+1+\frac{5}{6}+\frac{3}{3}\}\geq0$}. When $d(v)=7$, $ch_1(v)\geq7-2-\frac{4}{3}-1-\frac{5}{6}-\frac{4}{3}=\frac{1}{2}$. In general, for all $8^+$-vertex $v$, we have
%\begin{align}
%  ch_1(v)&\geq d(v)-2-\frac{4}{3}(\lfloor\frac{d(v)}{2}\rfloor-2)-1-\frac{5}{6}-\frac{1}{3}\lceil\frac{d(v)}{2}\rceil\nonumber\\
%  &\geq d(v)-2-\frac{d(v)}{3}+\frac{8}{3}-1-\frac{d(v)}{2}-\frac{5}{6}\nonumber\\
%  &=\frac{d(v)-7}{6}.\nonumber
%\end{align}
If $d(v)\geq5$ and $v$ is not dangerous, then $f_{3,3}(v)\leq\max\{0,\frac{d(v)+1}{2}-3\}$. Hence, when $d(v)=5$, $ch_1(v)\geq
5-2-1-\frac{5}{6}-\frac{3}{3}\geq\frac{1}{6}$. When $d(v)=6$, $ch_1(v)\geq
6-2-1-1-\frac{5}{6}-\frac{3}{3}\geq\frac{1}{6}$. When $d(v)\geq7$,
\begin{align}
  ch_1(v)&\geq d(v)-2-\frac{4}{3}\times(\frac{d(v)+1}{2}-3)-2-\frac{1}{3}\times\frac{d(v)+1}{2}\nonumber\\
  &\geq d(v)-\frac{5d(v)+5}{6}\nonumber\\
  &=\frac{d(v)-5}{6}\nonumber\\
  &\geq\frac{1}{3}.\nonumber
\end{align}
\end{proof}

\begin{claim}\label{c2}
For each $5^+$-vertex $v$ of $G'$. If $v$ is incident with a face $f=(u,v,w)$ in which $u$ is a bad $4$-vertex and $d(w)\geq4$, then $ch_1(v)\geq\frac{1}{6}$.
\end{claim}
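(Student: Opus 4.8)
The plan is to estimate the residual charge
$ch_1(v)=(d(v)-2)-\sum_{f\ni v}c(v\to f)$ directly, splitting according to whether $v$ is dangerous. If $v$ is not dangerous — which in particular happens whenever $d(v)\geq 13$ — then Claim \ref{c1} already gives $ch_1(v)\geq\frac16$; for $d(v)\geq 13$ one may instead argue directly, using that $v$ sends at most $\frac43$ to each incident $3$-face, at most $\frac13$ to each incident $6^+$-face, and $f_3(v)\leq\lfloor d(v)/2\rfloor$, so that $ch_1(v)\geq\frac{d(v)}6-2\geq\frac16$. Hence the substance of the proof is the case where $v$ is dangerous, so $5\leq d(v)\leq 12$.

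So assume $v$ is dangerous. The first step is to extract structure from Corollary \ref{CPC}(2): it forces $f_{3b}(v)=f_{bb}(v)=0$, and, when $d(v)$ is odd, also $f_{3,4}(v)=f_{4b}(v)=0$. Since $u$ is a bad $4$-vertex on $f=(u,v,w)$, the vanishing of $f_{bb}(v)$ shows that $w$ is not bad. I then read off the charge $v$ sends to $f$: if $d(w)=4$ then $f$ is a $(4,4,v)$-face and rule R3 gives $c(v\to f)=\frac23$; if $d(w)\geq 5$ then R5 applies, giving $c(v\to f)=\frac23$ by R5.2 when $v$ is not bad, but the smaller value $c(v\to f)=\frac12$ by R5.1 when $v$ is itself bad. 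This discount from $\frac23$ to $\frac12$ in the bad case is the mechanism that ultimately produces the extra $\frac16$.

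The second step is the charge count. Being dangerous, $v$ is incident with exactly $\lfloor\frac{d(v)-3}2\rfloor$ worst $(3,3,v)$-faces (each receiving $\frac43$); the remaining incident $3$-faces number at most one if $d(v)$ is odd and at most two if $d(v)$ is even, while each $6^+$-face receives $\frac13$. The face $f$ is one of these non-worst triangles. For odd $d(v)$, $f$ is the unique non-worst triangle, so $f_3(v)=\lfloor d(v)/2\rfloor$ and, since $f_{4b}(v)=0$, necessarily $d(w)\geq 5$; counting the $(3,4^-,v)$-faces then shows their number equals $\lfloor\frac{d(v)-2}2\rfloor$, so $v$ satisfies both defining conditions of a bad vertex. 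Hence $c(v\to f)=\frac12$ and a short computation yields $ch_1(v)=\frac{d(v)-4}6\geq\frac16$. For even $d(v)$, if $f$ is the only non-worst triangle then $ch_1(v)\geq\frac{d(v)-2}6\geq\frac23$; if a second non-worst triangle $f''$ is present, I use the dichotomy on whether $v$ is bad: when $v$ is bad and $d(w)\geq 5$ the discount gives $c(v\to f)=\frac12$ while $c(v\to f'')\leq 1$, and when $v$ is not bad, $f''$ must be a non-$(3,4^-,v)$-face so $c(v\to f'')\leq\frac56$ while $c(v\to f)=\frac23$. In both branches $c(v\to f)+c(v\to f'')\leq\frac32$, whence $ch_1(v)\geq\frac{d(v)+4}6-\frac32=\frac{d(v)-5}6\geq\frac16$.

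The main obstacle is the extremal regime $d(v)\in\{5,6\}$, where the crude estimate gives only $ch_1(v)\geq 0$ and every inequality above must be met with equality. Two points are essential here. First, one must notice that dangerousness together with the mere presence of the non-worst triangle $f$ forces $v$ itself to be bad, which is exactly what unlocks the R5.1 discount. Second, this discount is governed by R5 and hence requires $d(w)\geq 5$; the residual case is therefore $d(w)=4$ with $v$ bad, where $f$ is a $(4,4,v)$-face governed by R3 and no discount is available. For $d(v)\geq 8$ this case is absorbed by the slack in the count, but for $d(v)=6$ it must be excluded structurally: there $f_{3,3}(v)=1$, $f_{4b}(v)\geq 1$ and $f_3(v)=3$, so Lemma \ref{bad} exhibits a $(P_3+P_4,4)$-reducible subgraph, contradicting the choice of $G'$. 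Excluding this configuration closes the last gap and gives $ch_1(v)\geq\frac16$ in every case.
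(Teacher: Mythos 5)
Your overall route is the paper's route: dispose of $d(v)\geq13$ by a direct count and of non-dangerous $v$ by Claim \ref{c1}; for dangerous $v$ use Corollary \ref{CPC}(2) to get $f_{3b}(v)=f_{bb}(v)=0$ (hence $w$ not bad), observe that the presence of the non-worst triangle $f$ forces $v$ itself to be bad in the tight cases, and cash in the R5.1 discount $c(v\to f)=\frac12$, which requires $d(w)\geq5$. Your arithmetic checks out throughout, and your unified treatment is in places cleaner than the paper's: for odd $d(v)$ you deduce $d(w)\geq5$ directly from $f_{4b}(v)=0$ in Corollary \ref{CPC}(2), which at $d(v)=5$ replaces the paper's appeal to Lemma \ref{4redu60}(1), and your closed forms $\frac{d(v)-4}{6}$, $\frac{d(v)-2}{6}$, $\frac{d(v)-5}{6}$ absorb the paper's separate cases $d(v)=5$, odd $\geq7$, even $\geq8$.

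There is, however, one concrete flaw at exactly the crux you correctly isolated: the residual case $d(v)=6$, $v$ bad, $d(w)=4$. You exclude it by citing Lemma \ref{bad} (configuration (4): a $6$-vertex with $f_3=3$, $f_{3,3}=1$, $f_{4b}=1$), but that lemma does not apply here. Its standing hypothesis is that the bad vertex $w$ on the $f_{4b}$-face is \emph{both bad and dangerous}, and dangerousness is defined only for vertices of degree between $5$ and $12$; its proof genuinely uses this (it splits on the parity of $d(w)$ and exploits the worst faces around $w$). In your configuration the bad vertex realizing $f_{4b}(v)\geq1$ is the bad $4$-vertex $u$, which cannot be dangerous, so the hypotheses of Lemma \ref{bad}(4) fail. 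The correct mechanism — and the one the paper actually uses to force $d(w)\geq5$ at both $d(v)=5$ and $d(v)=6$ — is Lemma \ref{4redu60}(1) applied to the $4$-vertex $w$: since both $u$ and $v$ are bad, the face $f$ gives $f_{bb}(w)=1$, so $G'$ would contain a $(P_3+P_4,4)$-reducible subgraph on at most $29$ vertices, a contradiction. With this substitution your proof is complete; as written, the cited step would fail.
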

\begin{proof}
If $d(v)\geq13$, then $ch_1(v)\geq d(v)-2-\frac{4}{3}\times(\lfloor\frac{d(v)}{2}\rfloor-1)-\frac{5}{6}-\frac{1}{3}\times(d(v)-\lfloor\frac{d(v)}{2}\rfloor)=\frac{2}{3}d(v)-\lfloor\frac{d(v)}{2}\rfloor-\frac{3}{2}\geq\frac{d(v)-9}{6}\geq\frac{1}{3}$ by R1 and R6. Next it suffices to consider the case when $d(v)\leq12$. Moreover, by Claim \ref{c1}, it suffices to consider the case when $v$ is dangerous.

By Corollary \ref{CPC}(2), since $f_{3,3}(v)=\lfloor\frac{d(v)-3}{2}\rfloor$, so $f_{bb}(v)=0$ and $w$ is not bad. %when $d(v)$ is odd, by Lemma \ref{}, we have $f_{3,3}(v)+f_{3b}(v)+f_{3,4}(v)+f_{bb}(v)\leq\frac{d(v)+1}{2}-2$. By R2-R5,
%\begin{align}
%  ch_1(v)&\geq d(v)-2-\frac{4}{3}(\frac{d(v)+1}{2}-3)-2\times\frac{5}{6}-\frac{1}{3}\frac{d(v)+1}{2}\nonumber\\
%  &\geq d(v)-\frac{5d(v)+5}{6}+\frac{2}{6}\nonumber\\
%  &=\frac{d(v)-3}{6}\nonumber\\
%  &\geq\frac{1}{3}.\nonumber
%\end{align}
%When $d(v)$ is even, by Lemma \ref{}, we have $f_{3,3}(v)+f_{3,4}(v)+f_{3b}(v)+f_{bb}(v)\leq\frac{d(v)}{2}-1$. \textcolor[rgb]{1.00,0.00,0.00}{If $d(v)=6$, then by Lemma \ref{}, we have $f_{4b}(v)=1$. Thus $ch_1(v)\geq6-2-\frac{4}{3}-\frac{5}{6}-\frac{2}{3}-\frac{3}{3}\geq\frac{1}{6}$.} If $d(v)\geq8$, then
%\begin{align}
%ch_1(v)
%  &\geq d(v)-2-\frac{4}{3}(\frac{d(v)}{2}-2)-2\times\frac{5}{6}-\frac{1}{3}\frac{d(v)}{2}\nonumber\\
%  &\geq d(v)-\frac{5d(v)}{6}-1\nonumber\\
%  &=\frac{d(v)-6}{6}\nonumber\\
%  &\geq\frac{1}{3}.\nonumber
%\end{align}
Then $c(v\rightarrow f)\leq\frac{2}{3}$. If $d(v)=5$, then $v$ is bad, and by Lemma \ref{4redu60}(1), we know that $d(w)\geq5$. Hence, by R5.1, $ch_1(v)\geq
5-2-\frac{4}{3}-\frac{1}{2}-\frac{3}{3}\geq\frac{1}{6}$.
When $d(v)=6$ and $f_{3,3}(v)=1$, if $v$ is bad, then $d(w)\geq5$ and by R5.1, $c(v\rightarrow f)=\frac{1}{2}$ and it follows that $ch_1(v)\geq
6-2-\frac{4}{3}-1-\frac{1}{2}-\frac{3}{3}\geq\frac{1}{6}$. Otherwise, since $f_{3,4}(v)+f_{3b}(v)+f_{bb}(v)=0$, $ch_1(v)\geq
6-2-\frac{4}{3}-\frac{5}{6}-\frac{2}{3}-\frac{3}{3}\geq\frac{1}{6}$.
When $d(v)\geq7$ is odd, by Corollary \ref{CPC}(2), we have $f_{3b}(v)+f_{3,4}(v)+f_{bb}(v)=0$ and
\begin{align}
  ch_1(v)&\geq d(v)-2-\frac{4}{3}\times(\frac{d(v)+1}{2}-2)-\frac{2}{3}-\frac{1}{3}\times\frac{d(v)+1}{2}\nonumber\\
  &\geq d(v)-\frac{5d(v)+5}{6}\nonumber\\
  &=\frac{d(v)-5}{6}\nonumber\\
  &\geq\frac{1}{3}.\nonumber
\end{align}
When $d(v)\geq8$ is even, we have $f_{3,3}(v)=\frac{d(v)}{2}-2$, $f_{3,4}(v)\leq1$. Hence,
\begin{align}
ch_1(v)&\geq d(v)-2-\frac{4}{3}\times(\frac{d(v)}{2}-2)-1-\frac{2}{3}-\frac{1}{3}\times\frac{d(v)}{2}\nonumber\\
  &\geq d(v)-\frac{5d(v)}{6}-1\nonumber\\
  &=\frac{d(v)-6}{6}\nonumber\\
  &\geq\frac{1}{3}.\nonumber
\end{align}
\end{proof}

A vertex $v$ in $G'$ is called \emph{well} when $ch_1(v)\geq \frac{1}{12}$. Given a poor face $f=(3,4,4)$ and a well vertex $v$.
From now on, let $\varpi(v)$ be the number of nice paths starting at $v$. For each poor face $f$, we apply the following rules.
\begin{description}
\item[$\bf R7.$] If $f$ is poor and $g$ is a $7^{+}$-face sharing an edge with $f$, then $f$ receives $(\frac{d(g)}{3}-2)/\xi(g)$ from $g$.
\item[$\bf R8.$] If $f$ receives less than $\frac{1}{3}$ by \textbf{R7} and $v_1, \ldots, v_t$ ($1\leq t\leq2$) are the well vertices such that all nice paths connecting each $v_i$ with $f$ has the same internal vertices, then $f$ receives $\frac{1}{6t}$ from each $v_i$ ($i\in \{1, \ldots, t\}$).
%Each well vertex sends $\frac{1}{6}$ to each poor face $f$ if $f$ receives less than $\frac{1}{3}$ by \textbf{R7} and there is a nice path connecting $v$ and $f$. In particular, if there are $t$ well vertices $\{v_1, \ldots, v_t\}$ connecting with $f$ in the same nice path, then $f$ receives $\frac{1}{6t}$ from each $v_i$ ($i\in \{1, \ldots, t\}$).
\end{description}
Let $ch_2(x)$ be the final charge for each element $x\in V(G')\cup F(G')$ after applying R7 and R8.
\subsection{Each poor face $f$ satisfies $ch_2(f)\geq0$.}\label{3poor}
\medskip

Let $f=(u,v,w)$ be a poor face such that $d(u)=3,d(v)=4,d(w)=4$ and $f_1,f_2,f_3$ be three adjacent faces sharing edge $vw, wu, uv$ with $f$, respectively. In addition, let $x_5$ be the neighbor of $u$ outside $f$.
If $v$ is not vice, then by \textbf{R1-R2}, $ch_2(f)\geq0$. Hence by symmetry, we may assume that both $v$ and $w$ are vice. Let $w_1,x_1$ and $v_1,y_1$ be the other two neighbors of $w$ and $v$ outside $f$, respectively. Then they are all $4^+$-vertices.

If there exists either a $5^+$-vertex or a $4$-vertex that is not vice, among $\{v_1,y_1,x_1,w_1\}$, say $x_1$, then by Claim \ref{c2}, $ch_1(x_1)\geq\frac{1}{6}$. In particular, if there are at least two such vertices as $x_1$ among $\{v_1,y_1,x_1,w_1\}$, then by R7-R8, we have $ch_2(f)\geq0$. Hence, without loss of generality, we further assume that at least three among them, say $x_1,w_1,v_1$, are vice $4$-vertices. Note that the vertex sets $\{v_2,v_3,w_2,w_3\}$ and $\{x_1,x_2,x_3,x_4\}$ do not overlap.

\medskip
\begin{claim}\label{c3}
If $d(f_1)=6$ and $x_1,y_1,w_1,v_1$ are vice $4$-vertices, then $f$ receives at least $\frac{1}{6}$ from a vertex in $\{v_2,v_3,w_2,w_3\}$.
\end{claim}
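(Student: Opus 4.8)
The plan is to supply the charge $f$ still needs through rule R8, by producing a \emph{well} vertex in $\{v_2,v_3,w_2,w_3\}$ joined to $f$ by a nice path. I assume throughout that $f$ receives less than $\frac13$ from R7 (otherwise $ch_2(f)\ge0$ holds already and there is nothing to prove), so that R8 is in force.

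First I would fix the local structure that $\{C_4,C_5\}$-freeness imposes. Since $d(f_1)=6$, the face $f_1=vww_1w_3v_3v_1v$ is the hexagon carrying the two incident $3$-faces $v_1v_2v_3$ and $w_1w_2w_3$ (see Figure $2$). As $v$ is vice it has a second $3$-face besides $f$; if that face were $vuy_1$, then $wuy_1v$ would be a $4$-cycle, which is forbidden. Hence the second $3$-face of $v$ is $vv_1y_1$, and since $d(v)=d(v_1)=d(y_1)=4$ it is a $(4,4,4)$-face; by the $v\leftrightarrow w$ symmetry, $ww_1x_1$ is a $(4,4,4)$-face too.

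Next I would invoke reducibility to locate a well vertex. The configuration just isolated is exactly the hypothesis structure of Lemma \ref{redu2}, with $d(u)=3$ and $d(v)=d(w)=d(v_1)=d(w_1)=4$. Because $G'$ has no $(P_3+P_4,4)$-reducible induced subgraph on at most $100$ vertices while Lemma \ref{redu2} would deliver one on at most $49$ vertices, the remaining assumption of that lemma must fail, so either some $x^*\in\{v_2,v_3,w_2,w_3\}$ has $5\le d(x^*)\le12$ and is not dangerous, or some $x^*$ in the set has degree at least $13$. In both cases $x^*$ is well, by Claim \ref{c1} respectively the high-degree estimate opening the proof of Claim \ref{c2}, giving $ch_1(x^*)\ge\frac16>\frac1{12}$.

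Finally I would route the charge. By symmetry assume $x^*\in\{v_2,v_3\}$ and note $d(x^*)\ge5$. The path $x^*v_1v$ has length at most two and ends at the $4$-vertex $v\in V(f)$, its unique internal vertex $v_1$ is a vice $4$-vertex, and the only possible pair of consecutive $4$-vertices, $(v_1,v)$, lies in the $(4,4,4)$-face $vv_1y_1$ (the pair containing $x^*$ imposes no condition because $d(x^*)\ge5$); hence $x^*v_1v$ is a nice path of type (ii). Every well vertex whose nice path to $f$ runs through the internal vertex $v_1$ is a neighbour of $v_1$ other than $v$, hence lies in $\{y_1,v_2,v_3\}$; but $y_1$, being a vice $4$-vertex on the $(4,4,4)$-face $vv_1y_1$, sends $\frac23$ to each of its two $3$-faces and $\frac13$ to each of its two $6^+$-faces, so $ch_1(y_1)=0$ and $y_1$ is not well. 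Thus this group of well vertices is a nonempty subset of $\{v_2,v_3\}$ of size $t\le2$, and by R8 each member sends $\frac1{6t}$, giving $f$ a total of $\frac16$ from $\{v_2,v_3\}\subseteq\{v_2,v_3,w_2,w_3\}$, as claimed. The most delicate point is exactly this bookkeeping: one must be certain that the $(4,4,4)$-faces underlying the nice path are genuinely forced by $\{C_4,C_5\}$-freeness, and that no well vertex outside $\{v_2,v_3,w_2,w_3\}$ (in particular $y_1$) feeds charge through $v_1$, so that the full $\frac16$ is charged to the intended set.
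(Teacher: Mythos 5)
Your proof follows essentially the paper's own route: the paper also argues that, since $G'$ contains no $(P_3+P_4,4)$-reducible subgraph, the hypotheses of Lemma \ref{redu2} must fail, so either some vertex of $\{v_2,v_3,w_2,w_3\}$ is a $13^{+}$-vertex (hence well) or some vertex there has degree $5$--$12$ and is not dangerous (hence well by Claim \ref{c1}), and then routes $\frac{1}{6}$ to $f$ by R8 along the nice path through $v_1$ (resp.\ $w_1$). Your explicit verification that $\{C_4,C_5\}$-freeness forces $v$'s second triangle to be the $(4,4,4)$-face $vv_1y_1$ (so that the consecutive pair $(v_1,v)$ satisfies clause (ii) of the nice-path definition) is precisely the step the paper leaves implicit, and it is correct; likewise your handling of the case where R7 already supplies $\frac{1}{3}$ matches the paper's implicit reading of R8.

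The one inaccurate sub-step is your assertion that $ch_1(y_1)=0$ always, hence that $y_1$ is never well. A vice $4$-vertex does send $\frac{2}{3}$ to a $3$-face governed by R2, but if $y_1$'s second $3$-face contains a $13^{+}$-vertex then R0, not R2, governs that face: $y_1$ sends only $\frac{1}{3}$ there, giving $ch_1(y_1)=2-\frac{2}{3}-\frac{1}{3}-2\times\frac{1}{3}=\frac{1}{3}$, so $y_1$ can be well. Fortunately this does not sink your argument: by the wording of R8 the group consists of well vertices \emph{all} of whose nice paths to $f$ have the same internal vertices, and $y_1$ admits both the length-one nice path $y_1v$ (no internal vertex; the pair $(y_1,v)$ lies in the $(4,4,4)$-face $vv_1y_1$) and the path $y_1v_1v$, so $y_1$ never enters the group through $v_1$ in any case. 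Hence the group is a nonempty subset of $\{v_2,v_3\}$ of size $t\leq2$ exactly as you need, and $f$ receives $\frac{1}{6}$ in total from $\{v_2,v_3\}\subseteq\{v_2,v_3,w_2,w_3\}$. Repair the sub-step by this grouping observation (or simply by not claiming $ch_1(y_1)=0$), and the proof is complete and coincides with the paper's, with welcome extra rigor.
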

\begin{figure}[H]
\begin{center}
\includegraphics[scale=0.8]{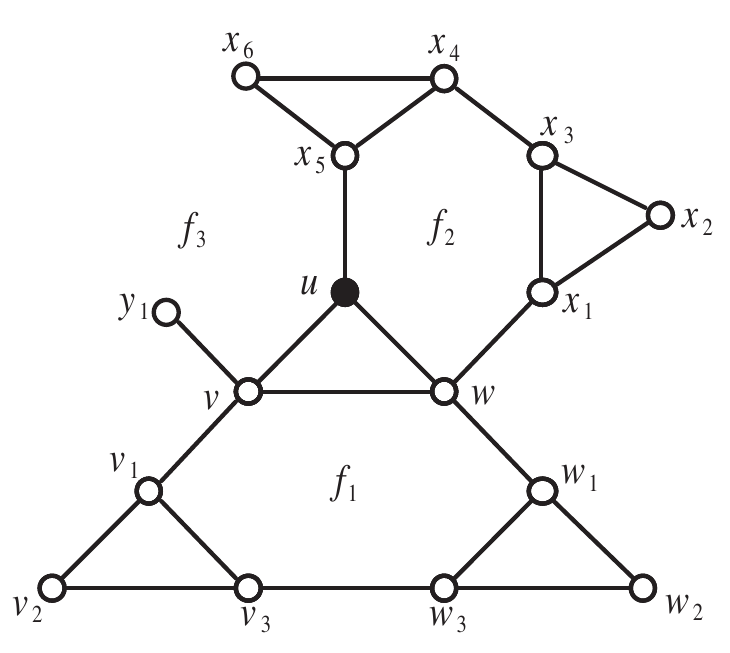}\\
{Figure 6}
\end{center}
\end{figure}
\begin{proof}
  We may assume that each vertex in $\{v_2,v_3,w_2,w_3\}$ has degree at most $12$.
Otherwise, by R8, $f$ receives at least $\frac{1}{6}$ in all from all $13^+$-vertices in $\{v_2,v_3,w_2,w_3\}$, and we are done. It is easily derived from Lemma \ref{redu2} that there exists a vertex in $\{v_2,v_3,w_2,w_3\}$, say $v_2$, such that $d(v_2)\geq5$ and $v_2$ is not dangerous.
By Claim \ref{c1}, $ch_1(v_2)\geq\frac{1}{6}$. Since there is a nice path connecting $v_2$ with $v$, by R8, $f$ receives $\frac{1}{6}$ from $v_2,v_3$ altogether.
\end{proof}

\begin{claim}\label{c4}
If $d(f_2)=6$, then $f$ receives at least $\frac{1}{6}$ from $x_2,x_3,x_4,x_5$ altogether.
\end{claim}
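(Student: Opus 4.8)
The plan is to track the residual charge on the poor face $f=(u,v,w)$ and show that the rule \textbf{R8}, fed from the $f_2$ side, delivers the missing $\tfrac16$. After \textbf{R1}--\textbf{R6} the face $f$ carries $-2+\bigl(\tfrac13+\tfrac23+\tfrac23\bigr)=-\tfrac13$: the $3$-vertex $u$ sends $\tfrac13$ by \textbf{R1}, and since both $v$ and $w$ are vice $4$-vertices, neither \textbf{R2.1} nor \textbf{R2.2} applies to them, so each sends $\tfrac23$ by \textbf{R2.3}. Hence $f$ must still collect $\tfrac13$ in total from \textbf{R7} and \textbf{R8}. Because $d(f_2)=6$, the face $f_2$ contributes nothing across the edge $wu$ under \textbf{R7}, so the claimed $\tfrac16$ has to be produced by \textbf{R8}, i.e.\ by a \emph{well} vertex that is joined to a vertex of $f$ by a nice path running on the $f_2$ side. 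The driving tool is Lemma \ref{redu3}.

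The key step is to read the structural dichotomy out of Lemma \ref{redu3}. The $6$-face $f_2=uwx_1x_3x_4x_5$ together with the two $3$-faces $uvw$ and $x_1x_2x_3$ realizes exactly the configuration of that lemma, with the forced degrees $d(u)=3$ and $d(w)=d(x_1)=4$ (the latter because $w,x_1$ are vice). Since $G'$ contains no $(P_3+P_4,4)$-reducible induced subgraph on at most $100$ vertices and the one supplied by Lemma \ref{redu3} has at most $37$, the hypotheses of that lemma cannot all hold; therefore \emph{either} $d(x_5)\ge 4$, \emph{or} some $x_i\in\{x_2,x_3,x_4\}$ is a $5^{+}$-vertex that is not dangerous. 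Before the case split I would first dispose of any $13^{+}$-vertex among $\{x_2,x_3,x_4,x_5\}$: such a vertex is well (its $ch_1\ge\tfrac13$ by the computation used in Claim \ref{c2}) and reaches $f$ by a nice path, so \textbf{R8} already yields $\tfrac16$; hence I may assume all four of these vertices have degree at most $12$.

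The remaining argument is a short case analysis exhibiting the nice path. If some $x_i$ with $i\in\{2,3,4\}$ satisfies $5\le d(x_i)\le 12$ and is not dangerous, then $ch_1(x_i)\ge\tfrac16$ by Claim \ref{c1}, so $x_i$ is well, and I would connect it to $f$ by a nice path — through the vice $4$-vertex $x_1$ via the type-(ii) path $x_ix_1w$ (after checking that the triangle on $wx_1$ is a $(4,4,4)$-face), or, when $d(x_5)=3$, through $x_5$ via the type-(i) path $x_4x_5u$ to the $3$-vertex $u$ — and then apply \textbf{R8} to obtain $\tfrac16$. Otherwise all of $x_2,x_3,x_4$ are $3/4$-vertices or dangerous, so $d(x_5)\ge 4$; when $d(x_5)\ge 5$ and $x_5$ is not dangerous, $x_5$ is well by Claim \ref{c1} and the length-one type-(i) path $x_5u$ joins it to $u\in V(f)$, so \textbf{R8} again gives $\tfrac16$, while the residual sub-cases $d(x_5)=4$ and $x_5$ dangerous are settled using the local vice/dangerous structure around $u$ and $f_2$.

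The main obstacle I anticipate is precisely the verification of the nice-path hypotheses: for a type-(ii) path one must confirm that every pair of consecutive $4$-vertices lies in a common $(4,4,4)$-face and that the source vertex is genuinely well, and for the fall-back sub-cases $d(x_5)=4$ or $x_5$ dangerous — where Lemma \ref{redu3} offers no leverage — one must extract enough from the surrounding configuration to still route $\tfrac16$ into $f$. Everything else is bookkeeping with the charge rules.
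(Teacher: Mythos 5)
Your skeleton is genuinely the paper's: dispose of $13^{+}$-vertices, read the dichotomy out of Lemma \ref{redu3} (``either $d(x_5)\geq4$ or some $x_i\in\{x_2,x_3,x_4\}$ is a non-dangerous $5^{+}$-vertex''), and route $\tfrac16$ into $f$ by R8 along nice paths, using Claim \ref{c1} to certify wellness; your verification of the type-(ii) path through $x_1$ is also correct, since $C_4$-freeness forces the second triangle at the vice vertex $w$ to be $ww_1x_1$, a $(4,4,4)$-face. But the two sub-cases you defer to ``the local vice/dangerous structure around $u$ and $f_2$'' are not residual bookkeeping --- they are the bulk of the paper's proof, and they do not follow from the tools you have assembled. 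For $d(x_5)=4$ you must actually prove $x_5$ is well, and this can fail: a vice $4$-vertex has $ch_1=0$ (it pays $\tfrac23$ to each of its two triangles and $\tfrac13$ to its two other faces). The paper closes this by showing $x_5$ is not vice and, via Corollary \ref{CPC}, that $f_{3,4}(x_5)=0$, which caps the payment to its single triangle at $\tfrac56$ and gives $ch_1(x_5)\geq 2-\tfrac56-3\cdot\tfrac13=\tfrac16$; none of this appears in your proposal.

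The dangerous sub-case is worse, because there Lemma \ref{redu3} is inapplicable (it requires $d(x_5)=3$), Claim \ref{c1} is inapplicable by definition, and $x_5$ itself may genuinely fail to be well: for a bad, dangerous vertex of odd degree $d$ with $f_{3,3}(x_5)=\tfrac{d-3}{2}$ one computes $ch_1(x_5)=\tfrac{d-1}{6}-c'$, where $c'$ is the charge sent to its remaining triangle, and this is $0$ when $d=5$ and $c'=\tfrac23$ (e.g.\ a $(4,4,x_5)$-face with both $4$-vertices vice, via R2.3 and R3). The paper closes this case by a chain of structural observations --- $f_{3b}(x_5)=f_{bb}(x_5)=0$, $d(x_5)$ odd, and $x_4x_5$ lies in a triangle $x_4x_5x_6$ with $x_4,x_6$ of degree at least $4$, each failure being refuted by a charge computation that makes $x_5$ well --- then shows via R0, R5.1 and R6 that $c'\leq\tfrac12$ whenever some vertex of $\{x_4,x_6\}$ is a $5^{+}$- or $13^{+}$-vertex, and in the terminal case $d(x_4)=d(x_6)=4$ invokes a \emph{second} reducible configuration, Lemma \ref{redu4}, tailored to exactly this picture of three triangles $uvw$, $x_1x_2x_3$, $x_4x_5x_6$ around the $6$-face with $x_5$ dangerous of odd degree, to extract a non-dangerous $5^{+}$-vertex in $\{x_2,x_3\}$ and finish by Claim \ref{c1} and R8. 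You never invoke Lemma \ref{redu4}, and without it (or an equivalent new reducibility argument) the dangerous case cannot be closed, so the proposal has a genuine gap at precisely the claim's main difficulty. A smaller slip in the same direction: your blanket assumption that all of $x_2,x_3,x_4,x_5$ have degree at most $12$ is unjustified for $x_4$, since when $d(x_5)\geq4$ there is no nice path from $x_4$ to $f$; the paper accordingly removes $13^{+}$-vertices only among $\{x_2,x_3,x_5\}$ and handles $d(x_4)\geq13$ inside the case analysis.
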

\begin{proof}
Suppose it is not true.
We may similarly assume that each vertex in $\{x_2,x_3,x_5\}$ has degree at most $12$.

If $d(x_5)=3$ and $d(x_4)\geq13$, then by R8, $f$ receives $\frac{1}{6}$ from $x_4$, a contradiction. If $d(x_5)=3$ and $d(x_4)\leq12$, then by Lemma \ref{redu3}, there exists a $5^+$-vertex in $\{x_2,x_3,x_4\}$ that is not dangerous. It follows from Claim \ref{c1} and R8 that $f$ receives $\frac{1}{6}$ from $x_2,x_3,x_4$ altogether.

If $d(x_5)=4$, then $x_5$ is not vice. Moreover, by Corollary \ref{CPC}(1), $f_{3,4}(v)=0$, and it is easy to see that $x_5$ is well. So $f$ receives $\frac{1}{6}$ from $x_5$.

If $5\leq d(x_5)\leq12$, then by Claim \ref{c1}, it remains to consider the case when $x_5$ is dangerous. Now we have the following observations:
\begin{enumerate}
  \item[(1)] $f_{3b}(x_5)=f_{bb}(x_5)=0$;
  \item[(2)] $x_4x_5$ is contained in a $3$-face, say $x_4x_5x_6$;
  \item[(3)] $x_4,x_6$ are $4^+$-vertices;
  \item[(4)] $d(x_5)$ is odd.
\end{enumerate}
In fact, $(1)$ and $(3)$ are immediately derived from Corollary \ref{CPC}. For $(2)$, if $x_4x_5$ is not contained in a $3$-face, then $x_5$ is incident with at most $\frac{d(v)-2}{2}$ $3$-faces. Since $x_5$ is dangerous, it is easy to see that if $d(x_5)$ is odd, then by Corollary \ref{CPC}(2), $f_{3,4}(x_5)=0$ and $ch_1(x_5)\geq d(x_5)-2-\frac{4}{3}\times\frac{d(x_5)-3}{2}-\frac{1}{3}\times\lceil\frac{d(x_5)+1}{2}\rceil\geq\frac{1}{6}$, a contradiction. If $d(x_5)$ is even, then $ch_1(x_5)\geq d(x_5)-2-\frac{4}{3}\times\frac{d(x_5)-4}{2}-1-\frac{1}{3}\times\frac{d(x_5)+2}{2}\geq\frac{1}{3}$, a contradiction.
For $(4)$, If $d(x_5)$ is even, then since $ux_5$ is not contained in a $3$-face, we know $x_5$ is incident with at most $\frac{d(v)-2}{2}$ $3$-faces. Applying similar arguments as
above, we know that $ch_1(x_5)\geq d(x_5)-2-\frac{4}{3}\times\frac{d(x_5)-4}{2}-1-\frac{1}{3}\times\frac{d(x_5)+2}{2}\geq\frac{1}{3}$. In all those cases, $f$ receives $\frac{1}{6}$ from $x_5$, a contradiction.

By the above observations, we know $x_5$ is also a bad vertex. Consider the $3$-face $f'=x_4x_5x_6$, if $d(x_i)\geq5$ for all $i\in\{4,6\}$, then by R6, $c(x_5\rightarrow f')=\frac{1}{2}$ and it follows that $ch_1(x_5)\geq d(x_5)-2-\frac{4}{3}\times\frac{d(x_5)-3}{2}-\frac{1}{2}-\frac{1}{3}\times\frac{d(x_5)+1}{2}\geq\frac{1}{6}$. If $d(x_i)=4$ for some $i\in\{4,6\}$, then $x_{10-i}$ is not bad. If $d(x_{10-i})\geq13$, then by R0, $c(x_5\rightarrow f')=\frac{1}{3}$. We know that $ch_1(x_5)\geq d(x_5)-2-\frac{4}{3}\times\frac{d(x_5)-3}{2}-\frac{1}{3}-\frac{1}{3}\times\frac{d(x_5)+1}{2}\geq\frac{1}{3}$, a contradiction. If $5\leq d(x_{10-i})\leq12$, then by R5.1, $c(x_5\rightarrow f')=\frac{1}{2}$, and it follows that $ch_1(x_5)\geq d(x_5)-2-\frac{4}{3}\times\frac{d(x_5)-3}{2}-\frac{1}{2}-\frac{1}{3}\times\frac{d(x_5)+1}{2}\geq\frac{1}{6}$, a contradiction. Now it remains to consider the case when $d(x_4)=d(x_6)=4$. By Lemma \ref{redu4}, there is a $5^+$-vertex inside $\{x_2,x_3\}$ that is not dangerous, say $x_2$, and by Claim \ref{c1}, $f$ receives $\frac{1}{6}$ from $x_2,x_3$ altogether, a contradiction.
\end{proof}

Now we proceed to verify that $ch_2(f)\geq0$. If there are at least two $7^+$-faces among $\{f_1,f_2,f_3\}$, then $ch_2(f)\geq0$ by R7. Hence we assume at least two of them are $6$-face.
By the assumption that $x_1,w_1,v_1$ are vice $4$-vertices, If $y_1$ is not vice and $f_2$ is $7^+$-face, then we are done. Otherwise, if $y_1$ is vice, then by Claim \ref{c3}, $f$ receives at least $\frac{1}{6}$ from a vertex in $\{v_2,v_3,w_2,w_3\}$. Meanwhile, if $f_2$ is a $6$-face, then by Claim \ref{c4}, $f$ receives at least $\frac{1}{6}$ from a vertex in $\{x_2,x_3,x_4,x_5\}$. This completes the proof.\medskip

\subsection{Final analysis}
Now we shall verify that $ch_2(x)\geq0$ for all $x\in V(G')\cup F(G')$. It is easy to observe that for each $v\in V(G')$, if $f_3(v)<\lfloor\frac{d(v)}{2}\rfloor$, then $ch_2(v)\geq0$. From now on, it suffices to only care the case $f_3(v)=\lfloor\frac{d(v)}{2}\rfloor$. We assume that each bad vertex satisfies $d(v)\geq5$. For $v\in V(G')$, denote by $N_b(v)$ the set of bad neighbors of $v$ and let $|N_b(v)|=n_b(v)$.
\begin{claim}\label{np1}
Let $v$ be a vertex of $G'$ with $5\leq d(v)\leq 12$. If $v$ is not dangerous, then $ch_2(v)\geq0$.
\end{claim}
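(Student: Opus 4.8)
The plan is to measure how much $v$ can lose in passing from $ch_1$ to $ch_2$. By Claim \ref{c1}, since $v$ is a non-dangerous $5^+$-vertex, we have $ch_1(v)\ge\frac16$ when $d(v)\in\{5,6\}$ and $ch_1(v)\ge\frac{d(v)-5}{6}$ when $7\le d(v)\le12$; in particular $ch_1(v)\ge\frac16>\frac1{12}$, so $v$ is \emph{well}. Consequently the only rule that can lower the charge of $v$ below $ch_1(v)$ is \textbf{R8} (rule \textbf{R7} moves charge between faces only, and $v$ never receives charge), under which $v$ sends $\frac{1}{6t_f}$ to each poor face $f$ joined to $v$ by a nice path, where $t_f\le2$ is the number of well vertices sharing that channel. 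Since $\frac1{6t_f}\le\frac16$, if $N$ denotes the number of poor faces receiving charge from $v$ we obtain
\[
ch_2(v)\ge ch_1(v)-\tfrac16N .
\]
Thus it suffices to prove $N\le 6\,ch_1(v)$, that is, $N\le1$ for $d(v)\in\{5,6\}$ and $N\le d(v)-5$ for $7\le d(v)\le12$.

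Next I would bound $N$ by charging each contributing poor face to local structure at $v$. Every nice path leaving $v$ begins with an edge $vx_1$: in a type-(ii) channel $x_1$ is a vice $4$-vertex and the subsequent edge lies in a $(4,4,4)$-face ending at a vice $4$-vertex of the poor face, while in a type-(i) channel the path runs to the $3$-vertex of the poor face through $3$-vertices only. Using $\{C_4,C_5\}$-freeness I would show that two distinct channels share neither the first edge $vx_1$ nor the incident $3$-face of $v$ containing it, since any such overlap would force a $C_4$ or $C_5$; hence the contributing poor faces inject into the incident $3$-faces of $v$. Because we are in the case $f_3(v)=\lfloor d(v)/2\rfloor$, this already yields $N\le\lfloor d(v)/2\rfloor$, which disposes of the larger degrees $9\le d(v)\le12$, where $\lfloor d(v)/2\rfloor\le d(v)-5$.

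For the remaining degrees the crude face count is too weak and the non-dangerous hypothesis must be used. Here I would refine the injection by observing that a channel can never start at a worst $(3,3,v)$-face: once $d(v)\ge5$, a $3$-vertex incident with such a face has its remaining two neighbors restricted so that it cannot simultaneously be the $3$-vertex of a $(3,4,4)$-face, and its neighbors have degree $3$, so they are not vice $4$-vertices either. Thus channels begin only at worse-type $3$-faces or at $(4,v,\ast)$-faces carrying a vice $4$-vertex, whose number is controlled through $f_{3,3}(v),f_{3,4}(v),f_{3b}(v),f_{bb}(v)$ by Corollary \ref{CPC}. Combining this with the reducibility lemmas — Lemma \ref{4redu60} bounding the worst-$3$-face patterns, Lemma \ref{CP} excluding rich stalk combinations around $v$, Lemma \ref{34edge} forbidding long vice-$4$ chains, and Lemma \ref{redu2} preventing a second $(4,4,4)$-anchored channel — I would cut $N$ down to the required $1$ for $d(v)\in\{5,6\}$ and to $d(v)-5$ for $d(v)\in\{7,8\}$.

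The main obstacle is precisely this last step. For $d(v)\in\{5,6\}$ the budget $ch_1(v)=\frac16$ is razor-thin, so I must certify that $v$ admits \emph{at most one} independent nice-path channel, and the cases $d(v)\in\{7,8\}$ lie just below the threshold at which the generic face count already succeeds. The delicate point is making the injection from channels to admissible $3$-faces simultaneously well-defined and sharp — ruling out every worst $3$-face as a channel origin and exhibiting enough further non-contributing incident faces — which is exactly where the structural lemmas of Section $3$ together with the non-dangerous hypothesis carry the argument.
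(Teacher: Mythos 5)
Your reduction $ch_2(v)\ge ch_1(v)-\frac{1}{6}N$ is sound as far as it goes, but the step where you substitute the worst-case bound of Claim \ref{c1} and conclude that it suffices to prove $N\le 1$ for $d(v)\in\{5,6\}$ is a genuine gap: that target is false, so the plan cannot be completed. The paper's own case analysis exhibits legitimate non-dangerous configurations with many channels; for instance, a $5$-vertex with $f_3(v)=2$ whose four face-neighbors are all $4$-vertices admits up to \emph{four} nice paths (the reducible combination $\{\mathrm{(a)},\mathrm{(l)},\mathrm{(l)}\}$ of Lemma \ref{CP} only yields $\varpi(v)\le4$), and a $6$-vertex case in the paper allows $\varpi(v)\le5$. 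These vertices survive not because $N$ is small but because in exactly those configurations the R3--R6 payments are small, so $ch_1(v)$ is $\frac{2}{3}$ or more rather than $\frac{1}{6}$ (e.g.\ $ch_2(v)\ge 5-2-3\times\frac{1}{3}-2\times\frac{2}{3}-4\times\frac{1}{6}=0$). In other words, $ch_1(v)$ and $\varpi(v)$ are strongly correlated, and your decoupling -- a uniform lower bound on $ch_1$ against a uniform upper bound on $N$ -- discards precisely the slack that makes the claim true. The paper instead runs a joint case analysis over the neighbor degrees, the number of bad neighbors $n_b(v)$, and $\varpi(v)$, invoking the stalk combinations of Lemma \ref{CP} together with Lemmas \ref{4redu60}, \ref{bad}, \ref{34edge} and \ref{local} to cap $\varpi(v)$ tightly only in the cases where the face payments are large; for $d(v)\ge 7$ it avoids bounding $N$ altogether by folding the R8 losses into the face-type inequality (*), charging $\frac{1}{6}$ per $f_{3,4}(v)$, $f_{3b}(v)$, $f_{4b}(v)$ and $2\times\frac{1}{6}$ per $f_{4,4}(v)$.

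A secondary flaw is your injection of channels into the $3$-faces incident with $v$: it is not well-defined, since nice paths may leave $v$ along edges lying on no $3$-face at all (the pendant paths $vu_2u_3$ in Lemma \ref{4redu60}(5)--(6) are exactly of this kind, and they enter the paper's counts of $\varpi(v)$), and a single $(4,4,v)$-face can carry two channels, one through each vice $4$-vertex, which is why (*) deducts $2\times\frac{1}{6}f_{4,4}(v)$. So even your crude bound $N\le\lfloor d(v)/2\rfloor$ for $9\le d(v)\le 12$ needs repair, and your heuristic that a worst $(3,3,v)$-face cannot originate a channel is asserted without proof (a type-(i) path may pass \emph{through} a degree-$3$ vertex of a worst face to reach the $3$-vertex of a poor face). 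To fix the argument you would have to abandon the uniform budget and reproduce, case by case, the correlation between the R3--R6 charges and $\varpi(v)$ -- which is essentially the paper's proof.
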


\begin{proof}
If $v$ is not dangerous, it follows that $f_{3,3}(v)\leq \lfloor\frac{d(v)-3}{2}\rfloor-1$. Accordingly,
\begin{align}
  ch_2(v)&\geq d(v)-2-\frac{4}{3}f_{3,3}(v)-f_{3,4}(v)-f_{3b}(v)-\frac{2}{3}f_{4,4}(v)-\frac{5}{6}f_{4b}(v)-f_{bb}(v)\nonumber\\
   &-\frac{1}{6}f_{3,4}(v)-\frac{1}{6}f_{3b}(v)-2\times \frac{1}{6}f_{4,4}(v)-\frac{1}{6}f_{4b}(v)-\frac{1}{3}(d(v)-f_3(v))\nonumber\\
  &\geq \frac{2}{3}d(v)-\frac{2}{3}f_3(v)-\frac{1}{3}f_{3,3}(v)-\frac{1}{6}(f_3(v)-f_{3,3}(v)-1)-2\nonumber\\
  &\geq \frac{d(v)-8}{6}.\nonumber  \tag{*}
\end{align}
Thus, $ch_2(v)\geq0$ when $d(v)\geq8$.

In particular, when $d(v)=7$, we get $ch_2(v)\geq \frac{2}{3}\times7-\frac{2}{3}\times3-\frac{1}{3}\times1-\frac{1}{6}\times1-2=\frac{1}{6}>0$.

\textbf{When $d(v)=6$.} Let $f_1=v_1v_2v$, $f_2=v_3v_4v$ and $f_3=v_5v_6v$ be three $3$-faces incident with $v$, respectively. We discuss the following two cases depending on whether $v$ is bad:\medskip

$\bullet$ $v$ is bad but not dangerous in $G^{'}$;%, then $ch_2(v)\geq0$.

\noindent
We may assume that $f_2$ and $f_3$ are worse, and it follows from $\{\mathrm{(a)}$, $\mathrm{(d)}$, $\mathrm{(d)}\}$ that $d(v_1)\geq4$, $d(v_2)\geq4$.
If $d(v_1)=d(v_2)=4$, then $\varpi(v)\leq2$ by $\{\mathrm{(e)}, \mathrm{(e)}, \mathrm{(k)}\}$ and Lemma \ref{34edge}, thus $ch_2(v)\geq6-2-3\times \frac{1}{3}-2\times1-\frac{2}{3}-2\times \frac{1}{6}=0$ by R3 and R8.
If $d(v_1)=4$, $d(v_2)\geq5$, then by $\{\mathrm{(d)}, \mathrm{(d)}, \mathrm{(i)}\}$, we get $n_b(v)=0$ and $\varpi(v)\leq3$ by Lemma \ref{34edge}. Thus $ch_2(v)\geq6-2-3\times\frac{1}{3}-2\times1-\frac{1}{2}-3\times\frac{1}{6}=0$ by R5.1.
If $d(v_1)\geq5$, $d(v_2)\geq5$, then $n_b(v)\leq1$ by $\{\mathrm{(d)}, \mathrm{(d)}, \mathrm{(h)}\}$. If there exists a bad vertex, then it can not be dangerous by Lemma \ref{bad}(2), thus $ch_2(v)\geq6-2-3\times \frac{1}{3}-2\times1-\frac{2}{3}-2\times\frac{1}{6}=0$ by R6.3.\medskip

$\bullet$ $v$ is neither bad nor dangerous in $G^{'}$;%, then $ch_2(v)\geq0$.

\textbf{Case 1.} $v$ is incident with a worse face;

\noindent
W.l.o.g, let $f_3$ be worse and $N_1(v)=N(v)\setminus \{v_5, v_6\}$. Suppose there are two $3$-vertices in $N_1(v)$, say $v_1$ and $v_3$, then $d(v_2)\geq5$, $d(v_4)\geq5$.
%By $\{\mathrm{(d)}, \mathrm{(f)}, \mathrm{(f)}\}$, $n_b(v)\leq1$. If $n_b(v)=1$, then $\varpi(v)\leq1$ by $\{\mathrm{(b)}, \mathrm{(d)}, \mathrm{(g)}\}$, $\{\mathrm{(b)}, \mathrm{(e)}, \mathrm{(f)}\}$, $\{\mathrm{(a)}, \mathrm{(e)}, \mathrm{(g)}\}$, thus $ch_2(v)\geq 6-2-3\times \frac{1}{3}-1-1-\frac{5}{6}-\frac{1}{6}=0$ by R4. Otherwise
By $\{\mathrm{(a)}, \mathrm{(d)}, \mathrm{(f)}\}$, we obtain $n_b(v)=0$, by Lemma \ref{local}, we get that both $v_2$ and $v_4$ are well vertices, thus $ch_2(v)\geq 6-2-3\times \frac{1}{3}-1-2\times \frac{5}{6}-\frac{1}{6}-2\times\frac{1}{12}=0$ by R8. Suppose there are only one $3$-vertex in $N_1(v)$, say $d(v_2)=3$, then $d(v_1)\geq5$. If $d(v_3)=d(v_4)=4$, we first consider $v_1$ is bad, then $\varpi(v)\leq2$ by $\{\mathrm{(e)}, \mathrm{(g)}, \mathrm{(k)}\}$, $\{\mathrm{(e)}, \mathrm{(f)}, \mathrm{(l)}\}$, $\{\mathrm{(d)}, \mathrm{(g)}, \mathrm{(l)}\}$, thus $ch_2(v)\geq 6-2-3\times \frac{1}{3}-1-1-\frac{2}{3}-2\times\frac{1}{6}=0$ by R3 and R4. Otherwise, $v_1$ is not bad and it follows from $\{\mathrm{(b)}, \mathrm{(d)}, \mathrm{(l)}\}$ that $\varpi(v)\leq3$, thus $ch_2(v)\geq 6-2-3\times \frac{1}{3}-1-\frac{5}{6}-\frac{2}{3}-3\times\frac{1}{6}=0$ by R4. If there exists a $5^{+}$-vertex in $\{v_3, v_4\}$, say $v_3$, it follows from $\{\mathrm{(d)}, \mathrm{(f)}, \mathrm{(i)}\}$ that $n_b(v)\leq1$. If $v_1$ is bad, then $\varpi(v)\leq2$ by $\{\mathrm{(e)}, \mathrm{(g)}, \mathrm{(m)}\}$. On the other hand, if $v_3$ is bad, we can also get $\varpi(v)\leq2$ by $\{\mathrm{(b)}, \mathrm{(d)}, \mathrm{(j)}\}$. If there is no bad vertex in $N_1(v)$, then $\varpi(v)\leq3$ by Lemma \ref{34edge}. Thus $ch_2(v)\geq 6-2-3\times \frac{1}{3}-1-\max\{1+\frac{2}{3}+2\times\frac{1}{6}, 2\times\frac{5}{6}+2\times\frac{1}{6}, \frac{5}{6}+\frac{2}{3}+3\times \frac{1}{6}\}=0$ by R4 and R5. If $d(v_3)\geq5$, $d(v_4)\geq5$, then it follows from $\{\mathrm{(a)}, \mathrm{(d)}, \mathrm{(h)}\}$ that $v_3$ and $v_4$ are not bad at the same time, thus $n_b(v)\leq2$. If $n_b(v)=2$, then by Lemma \ref{bad}(2),some $v_i$ for $i\in\{3,4\}$ is bad but not dangerous, and it follows that $ch_2(v)\geq 6-2-3\times \frac{1}{3}-1-1-\frac{2}{3}-2\times\frac{1}{6}=0$ by R4 and R6. If $n_b(v)\leq1$, then $ch_2(v)\geq 6-2-3\times \frac{1}{3}-1-\max\{1+\frac{2}{3}+2\times\frac{1}{6}, \frac{5}{6}+\frac{3}{4}+2\times\frac{1}{6}\}=0$ by R4, R6, and R8.
Next, we consider $d(z)\geq4$ for all $z\in N_1(v)$. If $d(z)=4$ for all $z\in N_1(v)$, then $\varpi(v)\leq4$ by $\{\mathrm{(d)}, \mathrm{(l)}, \mathrm{(l)}\}$, thus $ch_2(v)\geq6-2-3\times \frac{1}{3}-1-2\times\frac{2}{3}-4\times\frac{1}{6}=0$ by R3. Suppose there exists a $5^{+}$-vertex in $N_1(v)$, say $v_1$. If $v_1$ is bad, then $\varpi(v)\leq3$ by $\{\mathrm{(d)}, \mathrm{(j)}, \mathrm{(l)}\}$, thus $ch_2(v)\geq6-2-3\times \frac{1}{3}-1-\frac{2}{3}-\frac{5}{6}-3\times\frac{1}{6}=0$ by R5.1. If $v_1$ is not bad, then $ch_2(v)\geq6-2-3\times \frac{1}{3}-1-2\times\frac{2}{3}-4\times\frac{1}{6}=0$ by R5.2. Suppose there exists two $5^{+}$-vertices in $N_1(v)$, say $v_1$ and $v_2$ or $v_1$ and $v_3$. If $n_b(v)=2$, then $\varpi(v)\leq2$ by $\{\mathrm{(d)}, \mathrm{(j)}, \mathrm{(j)}\}$ or $\{\mathrm{(d)}, \mathrm{(h)}, \mathrm{(l)}\}$, thus $ch_2(v)\geq6-2-3\times \frac{1}{3}-1-\max\{2\times\frac{5}{6}, 1+\frac{2}{3}\}-2\times\frac{1}{6}=0$ by R3, R5 and R6. If $n_b(v)\leq1$, then $ch_2(v)\geq6-2-3\times \frac{1}{3}-1-\max\{\frac{3}{4}+\frac{2}{3}, \frac{2}{3}+\frac{5}{6}\}-3\times\frac{1}{6}=0$ by R5 and R6. If there exists three $5^{+}$-vertices in $N_1(v)$, then it follows from $\{\mathrm{(d)}, \mathrm{(h)}, \mathrm{(i)}\}$ that $n_b(v)\leq2$ and $ch_2(v)\geq6-2-3\times\frac{1}{3}-1-\max\{1+\frac{2}{3}, \frac{5}{6}+\frac{3}{4}\}-2\times\frac{1}{6}=0$ by R5 and R6. If $d(z)\geq5$ for all $z\in N_1(v)$, then it follows from $\{\mathrm{(d)}, \mathrm{(h)}, \mathrm{(h)}\}$ that $n_b(v)\leq3$ and $ch_2(v)\geq6-2-3\times\frac{1}{3}-1-\frac{3}{4}-1-\frac{1}{6}=\frac{1}{12}>0$ by R6.\medskip

\textbf{Case 2.} $v$ is not incident with a worse face;

\noindent
If $n_3(v)=3$, then $n_b(v)\leq1$ by $\{\mathrm{(a)}, \mathrm{(f)}, \mathrm{(f)}\}$. In particular, $\varpi(v)\leq2$ when $n_b(v)=1$ by Lemma \ref{4redu60}(6), thus $ch_2(v)\geq6-2-3\times\frac{1}{3}-\max\{1+2\times\frac{5}{6}+2\times\frac{1}{6}, 3\times \frac{5}{6}+3\times \frac{1}{6}\}=0$ by R4.
If $n_3(v)=2$, w.l.o.g, we say $d(v_1)=d(v_3)=3$, then $d(v_2)\geq5$, $d(v_4)\geq5$. Suppose $d(v_5)=d(v_6)=4$, if $n_b(v)=2$, then $\varpi(v)\leq2$ by $\{\mathrm{(g)}, \mathrm{(g)}, \mathrm{(k)}\}$, and it follows that $ch_2(v)\geq6-2-3\times\frac{1}{3}-2\times1-\frac{2}{3}-2\times\frac{1}{6}=0$ by R3 and R4. If $n_b(v)\leq1$, then $ch_2(v)\geq6-2-3\times\frac{1}{3}-\max\{1+\frac{5}{6}+\frac{2}{3}+3\times\frac{1}{6}, 2\times\frac{5}{6}+\frac{2}{3}+4\times\frac{1}{6}\}=0$. If there is a $5^{+}$-vertex in $\{v_5, v_6\}$, say $v_5$, then $n_b(v)\leq2$ by $\{\mathrm{(f)}, \mathrm{(f)}, \mathrm{(i)}\}$. If $n_b(v)=2$, then $\varpi(v)\leq2$ by $\{\mathrm{(g)}, \mathrm{(g)}, \mathrm{(m)}\}$ or $\{\mathrm{(b)}, \mathrm{(g)}, \mathrm{(j)}\}$, thus $ch_2(v)\geq6-2-3\times\frac{1}{3}-\max\{2\times1+\frac{2}{3}, 1+\frac{5}{6}+\frac{5}{6}\}-2\times\frac{1}{6}=0$ by R4 and R5. Otherwise $n_b(v)\leq1$, we get $ch_2(v)\geq6-2-3\times\frac{1}{3}-\max\{1+\frac{5}{6}+\frac{2}{3}, 3\times\frac{5}{6}\}-3\times\frac{1}{6}=0$ by R4 and R5. If both of $v_5$ and $v_6$ are $5^{+}$-vertex, then $n_b(v)\leq3$ by $\{\mathrm{(f)}, \mathrm{(f)}, \mathrm{(h)}\}$. If $n_3(b)=3$, then $v_i$ $(i\in\{5,6\})$ is bad but not dangerous by Lemma \ref{bad}(3), thus $ch_2(v)\geq6-2-3\times\frac{1}{3}-2\times1-\frac{2}{3}-2\times\frac{1}{6}=0$ by R6. Otherwise $n_3(b)\leq2$, we have $ch_2(v)\geq6-2-3\times\frac{1}{3}-\max\{1+\frac{3}{4}+\frac{5}{6}, 1+2\times\frac{5}{6}, 1\times2+\frac{2}{3}\}-2\times\frac{1}{6}=0$ by R4 and R6. If $n_3(v)=1$, w.l.o.g, say $d(v_1)=3$, then $d(v_2)\geq5$, we denote $N_2(v)=N(v)\backslash\{v_1, v_2\}$. If $d(z)=4$ for all $z\in N_2(v)$. Suppose $n_b(v)=1$, then $\varpi(v)\leq3$ by $\{\mathrm{(f)}, \mathrm{(k)}, \mathrm{(l)}\}$, thus $ch_2(v)\geq6-2-3\times\frac{1}{3}-1-2\times\frac{2}{3}-3\times\frac{1}{6}=\frac{1}{6}>0$ by R3 and R4. Otherwise $n_b(v)=0$, it follows that $ch_2(v)\geq6-2-3\times\frac{1}{3}-\frac{5}{6}-2\times\frac{2}{3}-5\times\frac{1}{6}$=0. If there exists an $5^{+}$-vertex in $N_2(v)$, say $v_3$. If $n_b(v)=2$, then $\varpi(v)\leq3$ by $\{\mathrm{(g)}, \mathrm{(j)}, \mathrm{(l)}\}$, thus $ch_2(v)\geq6-2-3\times\frac{1}{3}-1-\frac{5}{6}-\frac{2}{3}-3\times\frac{1}{6}=0$ by R4 and R5. Otherwise $n_b(v)\leq1$, we get $ch_2(v)\geq6-2-3\times\frac{1}{3}-\max\{1+2\times\frac{2}{3}, 2\times\frac{5}{6}+\frac{2}{3}\}-4\times\frac{1}{6}=0$ by R4 and R6. If there are two $5^{+}$-vertices in $N_2(v)$, say $v_3$ and $v_4$, or $v_3$ and $v_5$. In the former case, if $n_b(v)=3$, then $\varpi(v)\leq1$ by $\{\mathrm{(f)}, \mathrm{(h)}, \mathrm{(k)}\}$, thus $ch_2(v)\geq6-2-3\times\frac{1}{3}-2\times1-\frac{2}{3}-\frac{1}{6}=\frac{1}{6}>0$. If $n_b(v)\leq2$, then $ch_2(v)\geq6-2-3\times\frac{1}{3}-\max\{1+\frac{5}{6}+\frac{2}{3}, 1+\frac{3}{4}+\frac{2}{3}\}-3\times\frac{1}{6}=0$. In the latter case, if $n_b(v)=3$, then $\varpi(v)\leq2$ by $\{\mathrm{(g)}, \mathrm{(j)}, \mathrm{(j)}\}$, thus $ch_2(v)\geq6-2-3\times\frac{1}{3}-1-2\times\frac{5}{6}-2\times\frac{1}{6}=0$. If $n_b(v)\leq2$, then $ch_2(v)\geq6-2-3\times\frac{1}{3}-\max\{1+\frac{5}{6}+\frac{2}{3}, 3\times\frac{5}{6}\}-3\times\frac{1}{6}=0$. If there are three $5^{+}$-vertices in $N_2(v)$, then $n_b(v)\leq3$ by $\{\mathrm{(f)}, \mathrm{(h)}, \mathrm{(i)}\}$, thus $ch_2(v)\geq6-2-3\times\frac{1}{3}-\max\{1+2\times\frac{5}{6}, 2\times1+\frac{2}{3}, 1+\frac{5}{6}+\frac{3}{4}\}-2\times\frac{1}{6}=0$. If all vertices are $5^{+}$-vertices in $N_2(v)$, then $n_b(v)\leq4$ by $\{\mathrm{(f)}, \mathrm{(h)}, \mathrm{(h)}\}$, it follows that $ch_2(v)\geq6-2-3\times\frac{1}{3}-\max\{2\times1+\frac{5}{6}, 2\times1+\frac{3}{4}\}-\frac{1}{6}=0$ by R4 and R6.
Eventually, we consider $n_3(v)=0$. If $n_{5^{+}}(v)=0$, i.e. $d(z)=4$ for all $z\in N(v)$, then $ch_2(v)\geq6-2-3\times\frac{1}{3}-3\times\frac{2}{3}-6\times\frac{1}{6}=0$ by R3. If $n_{5^{+}}(v)=1$, it follows that $ch_2(v)\geq6-2-3\times\frac{1}{3}-2\times\frac{2}{3}-\frac{5}{6}-5\times\frac{1}{6}=0$ by R5. If $n_{5^{+}}(v)=2$, then $ch_2(v)\geq6-2-3\times\frac{1}{3}-\max\{2\times\frac{2}{3}+1, 2\times\frac{5}{6}+\frac{2}{3}\}-4\times\frac{1}{6}=0$ by R5 and R6. If $n_{5^{+}}(v)=3$, then $ch_2(v)\geq6-2-3\times\frac{1}{3}-\max\{1+\frac{5}{6}+\frac{2}{3}, 3\times\frac{5}{6}\}-3\times\frac{1}{6}=0$ by R5 and R6. If $n_{5^{+}}(v)=4$, then $ch_2(v)\geq6-2-3\times\frac{1}{3}-\max\{2\times1+\frac{2}{3}, 2\times\frac{5}{6}+1\}-2\times\frac{1}{6}=0$ by R5 and R6. If $n_{5^{+}}(v)=5$, then $ch_2(v)\geq6-2-3\times\frac{1}{3}-2\times1-\frac{5}{6}-\frac{1}{6}=0$ by R5 and R6. If $n_{5^{+}}(v)=6$, then $ch_2(v)\geq 6-2-3\times\frac{1}{3}-3\times1=0$ by R6.\medskip

\textbf{When $d(v)=5$}. Let $f_1=v_1v_2v$, $f_2=v_3v_4v$ be two $3$-faces incident with $v$. Similarly, we consider whether $v$ is bad.\medskip

$\bullet$ $v$ is bad but not dangerous in $G^{'}$;%, then $ch_2(v)\geq0$.

\noindent
Assume $f_2$ is worse, if $d(v_1)=3$, then $d(v_2)\geq5$, and note that $v_2$ is not bad by $\{\mathrm{(d)}, \mathrm{(f)}\}$, it follows from $\{\mathrm{(a)}, \mathrm{(b)}, \mathrm{(d)}\}$ that $\varpi(v)\leq2$, thus $ch_2(v)=5-2-3\times \frac{1}{3}-1-\frac{2}{3}-2\times\frac{1}{6}=0$ by R3 and R4. If $d(v_1)=d(v_2)=4$, then $\varpi(v)\leq2$ by $\{\mathrm{(a)}, \mathrm{(d)}, \mathrm{(k)}\}$, thus $ch_2(v)\geq 5-2-3\times \frac{1}{3}-1-\frac{2}{3}-2\times \frac{1}{6}=0$ by R3. If $d(v_1)=4$ and $d(v_2)\geq5$, then $v_2$ is not bad by $\{\mathrm{(d)}, \mathrm{(i)}\}$ and  $ch_2(v)\geq 5-2-3\times \frac{1}{3}-1-\frac{1}{2}-3\times \frac{1}{6}=0$ by R3 and R5. If $d(v_1)\geq5$ and $d(v_2)\geq5$, then it follows that $n_b(v)\leq1$ by $\{\mathrm{(d)}, \mathrm{(h)}\}$. If $n_b(v)=1$, w.l.o.g., let $v_1$ be a bad vertex. If $v_1$ is also dangerous, then $d(v_5)\geq4$ by Lemma \ref{bad}(1), and it follows that $ch_2(v)\geq 5-2-3\times \frac{1}{3}-1-\frac{3}{4}-\frac{1}{6}=\frac{1}{12}$ by R3 and R6. If $v_1$ is not dangerous, then $ch_2(v)\geq 5-2-3\times \frac{1}{3}-1-\frac{2}{3}-2\times\frac{1}{6}=0$. If $n_b(v)=0$, then $ch_2(v)\geq 5-2-3\times \frac{1}{3}-1-\frac{2}{3}-2\times\frac{1}{6}=0$.\medskip

$\bullet$ $v$ is neither bad nor dangerous in $G^{'}$;%, then $ch_2(v)\geq0$.

\noindent
Let $N_3(v)=\{v_1, v_2, v_3, v_4\}$, we denote $N_{3b}(v)$, $N_{3}^{*}(v)$ the set of bad vertices and $3$-vertices in $N_3(v)$ respectively. For simplicity, let $n_{3b}(v)=|N_{3b}(v)|$, $n_3^{*}(v)=|N_{3}^{*}|$. If $n_3^{*}(v)=2$, say $v_1$ and $v_3$, then $d(v_2)\geq5$, $d(v_4)\geq5$. We get $n_{3b}(v)\leq1$ by $\{\mathrm{(f)}, \mathrm{(f)}\}$. If $n_{3b}(v)=1$, then $\varpi(v)\leq1$ by Lemma \ref{4redu60}(5),
%$\{\mathrm{(a)}, \mathrm{(b)}, \mathrm{(f)}\}$, $\{\mathrm{(a)}, \mathrm{(a)}, \mathrm{(g)}\}$
 thus $ch_2(v)\geq 5-2-3\times \frac{1}{3}-1-\frac{5}{6}-\frac{1}{6}=0$. Otherwise, $n_{3b}(v)=0$, if $d(v_5)\geq4$, then $ch_2(v)\geq 5-2-3\times \frac{1}{3}-2\times\frac{5}{6}-2\times\frac{1}{6}=0$. If $d(v_5)=3$, it follows from Lemma \ref{local} that both $v_2$ and $v_4$ are well vertices, thus $ch_2(v)\geq 5-2-3\times \frac{1}{3}-2\times\frac{5}{6}-2\times\frac{1}{12}-\frac{1}{6}=0$. If $n_3^{*}(v)=1$, say $v_1$, then $d(v_2)\geq5$. When $d(v_3)=d(v_4)=4$, if $v_1$ is bad, then $\varpi(v)\leq2$ by $\{\mathrm{(a)}, \mathrm{(g)}, \mathrm{(k)}\}$, thus $ch_2(v)\geq 5-2-3\times \frac{1}{3}-1-\frac{2}{3}-2\times\frac{1}{6}=0$ by R3 and R4. Otherwise, $v_1$ is not bad, then $\varpi(v)\leq3$ by $\{\mathrm{(a)}, \mathrm{(b)}, \mathrm{(l)}\}$, thus $ch_2(v)\geq 5-2-3\times \frac{1}{3}-\frac{5}{6}-\frac{2}{3}-3\times\frac{1}{6}=0$. When $d(v_i)\geq5$ for some $i\in\{3,4\}$, if $n_{3b}(v)=2$, then $d(v_5)\geq4$ by $\{\mathrm{(a)}, \mathrm{(f)}, \mathrm{(i)}\}$. Moreover, $\varpi(v)\leq1$ by $\{\mathrm{(g)}, \mathrm{(j)}\}$, thus $ch_2(v)\geq 5-2-3\times \frac{1}{3}-1-\frac{5}{6}-\frac{1}{6}=0$. If $n_{3b}(v)=1$, first, suppose $v_i$ is bad, then $\varpi(v)\leq2$ by $\{\mathrm{(a)}, \mathrm{(b)}, \mathrm{(i)}\}$, thus $ch_2(v)\geq 5-2-3\times \frac{1}{3}-2\times\frac{5}{6}-2\times\frac{1}{6}=0$. Second, suppose $v_2$ is bad, it follows from $\{\mathrm{(a)}, \mathrm{(g)}, \mathrm{(m)}\}$ that $\varpi(v)\leq2$, thus $ch_2(v)\geq 5-2-3\times \frac{1}{3}-1-\frac{2}{3}-2\times\frac{1}{6}=0$.
If neither $v_2$ nor $v_i$ is bad, then $ch_2(v)\geq 5-2-3\times \frac{1}{3}-\frac{5}{6}-\frac{2}{3}-3\times\frac{1}{6}=0$. When $d(v_3)\geq5$, $d(v_4)\geq5$, $n_{3b}(v)\leq2$ by $\{\mathrm{(f)}, \mathrm{(h)}\}$. If $n_{3b}(v)=2$, we get $\varpi(v)\leq1$ by Lemma \ref{bad}(1), $\{\mathrm{(a)}, \mathrm{(b)}, \mathrm{(h)}\}$, thus $ch_2(v)\geq 5-2-3\times \frac{1}{3}-\max\{1+\frac{5}{6}+\frac{1}{6}, 1+\frac{3}{4}+\frac{1}{6}, 1+\frac{2}{3}+2\times\frac{1}{6}\}=0$ by R5-R6. Otherwise, $ch_2(v)\geq 5-2-3\times \frac{1}{3}-\max\{1+\frac{2}{3}, \frac{5}{6}+\frac{3}{4}\}-2\times\frac{1}{6}=0$. Next, we consider $n_3^{*}(v)=0$, which means $d(z)\geq4$ for all $z\in N_3(v)$. If $d(z)=4$ for all $z\in N_3(v)$, then $\varpi(v)\leq4$ by $\{\mathrm{(a)}, \mathrm{(l)}, \mathrm{(l)}\}$, it follows that $ch_2(v)\geq 5-2-3\times \frac{1}{3}-2\times \frac{2}{3}-4\times\frac{1}{6}=0$ by R3. If there exist a $5^{+}$ vertex in $N_3(v)$, say $v_1$, if $v_1$ is bad, then $\varpi(v)\leq3$ by $\{\mathrm{(a)}, \mathrm{(j)}, \mathrm{(l)}\}$, thus $ch_2(v)\geq 5-2-3\times \frac{1}{3}-\frac{2}{3}-\frac{5}{6}-3\times \frac{1}{6}=0$. Otherwise, $v_1$ is not bad, it follows that $ch_2(v)\geq 5-2-3\times \frac{1}{3}-2\times \frac{2}{3}-4\times \frac{1}{6}=0$ by R3 and R5. If there are two $5^{+}$-vertices in $N_3(v)$, say $v_1$ and $v_2$ or $v_1$ and $v_3$, if $n_{3b}(v)=2$, then $\varpi(v)\leq2$ by $\{\mathrm{(a)}, \mathrm{(h)}, \mathrm{(l)}\}$, $\{\mathrm{(a)}, \mathrm{(j)}, \mathrm{(j)}\}$, then $ch_2(v)=5-2-3\times \frac{1}{3}-\max\{2\times \frac{5}{6}, 1+\frac{2}{3}\}-2\times\frac{1}{6}=0$. Otherwise if $n_{3b}(v)\leq1$, then $ch_2(v)=5-2-3\times \frac{1}{3}-\max\{\frac{5}{6}+\frac{2}{3}, \frac{3}{4}+\frac{2}{3}\}-3\times \frac{1}{6}=0$. If there are three $5^{+}$-vertices in $N_3(v)$, and if $n_{3b}(v)=3$, then $\varpi(v)\leq1$ by $\{\mathrm{(a)}, \mathrm{(h)}, \mathrm{(j)}\}$, thus $ch_2(v)\geq5-2-3\times \frac{1}{3}-\frac{5}{6}-1-\frac{1}{6}=0$ by R5 and R6. Otherwise $n_{3b}(v)\leq2$, then $ch_2(v)\geq5-2-3\times \frac{1}{3}-\max\{\frac{2}{3}+1, \frac{3}{4}+\frac{5}{6}\}-2\times\frac{1}{6}=0$. If $d(z)\geq5$ for all $z\in N_3(v)$, then $n_{3b}(v)\leq3$ by $\{\mathrm{(h)}, \mathrm{(h)}\}$, thus $ch_2(v)\geq5-2-3\times \frac{1}{3}-1-\frac{3}{4}-\frac{1}{6}=\frac{1}{12}>0$.\medskip

This completes the proof of Claim \ref{np1}.
\end{proof}

Now, we are ready to verify all vertices in $G'$ satisfying $ch_2(v)\geq0$.\medskip

\textbf{Let $v$ be a $3$-vertex in $G'$}. Then $ch_2(v)=ch_1(v)=1-3\times \frac{1}{3}=0$ by R1.\medskip

\textbf{Let $v$ be a $4$-vertex in $G'$}. Then $ch_2(v)=ch_1(v)\geq2-\max\{1+3\times \frac{1}{3}, \frac{1}{2}+\frac{2}{3}+2\times \frac{1}{3}, \frac{5}{6}+3\times\frac{1}{3}\}=0$ by R2.\medskip

\textbf{Let $v$ be a $5^{+}$-vertex in $G'$}. Suppose $v$ is not dangerous, then $ch_2(v)\geq0$ by Claim \ref{np1}. Next, we consider the case $v$ is dangerous.

If $d(v)$ is even, then
$f_{3,3}(v)=\frac{d(v)}{2}-2$, thus we obtain that $ch_2(v)=d(v)-2-\frac{4}{3}(\frac{d(v)}{2}-2)-\frac{1}{3}(d(v)-f_3(v))-1-\frac{7}{6}=\frac{d(v)-9}{6} \geq0$ when $d(v)\geq9$.\medskip

\textbf{Let $v$ be a $8$-vertex in $G'$}. Note that $ch_2(v)=2-\frac{7}{6}f_{3,4}(v)-f_{4,4}(v)-f_{4b}(v)$. Let $f_3$ as well as $f_4$ be worst and $f_1$ and $f_2$ be the rest two $3$-faces. If $v$ is bad, then $\varpi(v)\leq2$ by $\{\mathrm{(c)}, \mathrm{(c)}, \mathrm{(e)}, \mathrm{(l)}\}$, and it follows that $ch_2(v)\geq8-2-4\times\frac{1}{3}-2\times\frac{4}{3}-1-\frac{2}{3}-2\times\frac{1}{6}=0$ by R3 and R5. Otherwise, $ch_2(v)\geq8-2-4\times\frac{1}{3}-2\times\frac{4}{3}-\max\{2\times\frac{2}{3}+4\times\frac{1}{6}, 2\times\frac{5}{6}+2\times \frac{1}{6}\}=0$ by R3-R5.\medskip

\textbf{Let $v$ be a $6$-vertex in $G'$}. We assume that $f_1=v_1v_2v$, $f_2=v_3v_4v$ and $f_3$ is worst.

$\bullet$ $v$ is bad and dangerous in $G^{'}$;%, then $ch_2(v)\geq0$.

\noindent
W.l.o.g, assume $f_2$ is worse. If $d(v_1)=d(v_2)=4$, then $ch_2(v)=ch_1(v)=6-2-3\times \frac{1}{3}-\frac{4}{3}-1-\frac{2}{3}=0$. If $d(v_i)\geq5$ for some $i\in\{1,2\}$, then $v_i$ can not be bad by $\{\mathrm{(c)}, \mathrm{(d)}, \mathrm{(i)}\}$, and $\varpi(v)\leq1$ by $\{\mathrm{(c)}, \mathrm{(e)}, \mathrm{(m)}\}$, thus $ch_2(v)\geq 6-2-3\times \frac{1}{3}-\frac{4}{3}-1-\frac{1}{2}-\frac{1}{6}=0$ by R4 and R5. If $d(v_i)\geq5$ for all $i\in\{1,2\}$, then $n_b(v)\leq1$ by $\{\mathrm{(c)}, \mathrm{(d)}, \mathrm{(h)}\}$, thus $ch_2(v)\geq 6-2-3\times \frac{1}{3}-\frac{4}{3}-1-\frac{1}{2}-\frac{1}{6}=0$ by R3 and R6.

$\bullet$ $v$ is not bad but dangerous in $G^{'}$;%, then $ch_2(v)\geq0$.

\textbf{Case 1.} $n_3(v)=4$;

\noindent
Then $n_{5^{+}}(v)=2$. It follows that $ch_2(v)=ch_1(v)\geq 6-2-3\times \frac{1}{3}-\frac{4}{3}-2\times\frac{5}{6}$=0.

\textbf{Case 2.} $n_3(v)=3$;

\noindent
Let $v_1$ be another $3$-vertex, then $d(v_2)\geq5$. If $d(v_3)=d(v_4)=4$, then $n_b(v)=0$ by Lemma \ref{4redu60}(2), and $\varpi(v)\leq1$ by $\{\mathrm{(a)}\,$or$\, \mathrm{(b)}, \mathrm{(c)}, \mathrm{(k)}\}$, thus $ch_2(v)\geq 6-2-3\times \frac{1}{3}-\frac{4}{3}-\frac{5}{6}-\frac{2}{3}-\frac{1}{6}=0$ by R3 and R4. If $d(v_i)\geq5$ for some $i\in\{3,4\}$, then $n_b(v)=0$ by $\{\mathrm{(a)}, \mathrm{(c)}, \mathrm{(i)}\}$ and Lemma \ref{4redu60}. Moreover, both $v_2$ and $v_i$ are well vertices by Lemma \ref{local}, thus $ch_2(v)\geq 6-2-3\times \frac{1}{3}-\frac{4}{3}-\frac{5}{6}-\frac{2}{3}-2\times\frac{1}{12}=0$ by R8. If $d(v_i)\geq5$ for all $i\in\{3,4\}$, note that $n_b(v)=1$ by $\{\mathrm{(a)}, \mathrm{(c)}, \mathrm{(h)}\}$ and $v_2$ must be well vertices by Lemma \ref{local}, then $ch_2(v)\geq 6-2-3\times \frac{1}{3}-\frac{4}{3}-\frac{5}{6}-\frac{3}{4}-\frac{1}{12}=0$ by R8.

\textbf{Case 3.} $n_3(v)=2$;

\noindent
If $n_4(v)=4$, then $\varpi(v)\leq2$ by $\{\mathrm{(c)}, \mathrm{(k)}, \mathrm{(l)}\}$, it follows that $ch_2(v)=6-2-3\times \frac{1}{3}-\frac{4}{3}-2\times \frac{2}{3}-2\times\frac{1}{6}=0$ by R3. We denote $N_4(v)=N(v)\backslash\{v_5, v_6\}$, suppose $n_{5^{+}}(v)=1$. If $n_b(v)=1$, then $\varpi(v)\leq1$ by $\{\mathrm{(c)}, \mathrm{(i)}, \mathrm{(l)}\}$, $\{\mathrm{(c)}, \mathrm{(j)}, \mathrm{(k)}\}$, it follows that $ch_2(v)\geq 6-2-3\times \frac{1}{3}-\frac{4}{3}-\frac{2}{3}-\frac{5}{6}-\frac{1}{6}=0$ by R3 and R5. Otherwise if $n_b(v)=0$, then $\varpi(v)\leq2$ by $\{\mathrm{(c)}, \mathrm{(l)}, \mathrm{(m)}\}$, it follows that $ch_2(v)\geq 6-2-3\times \frac{1}{3}-\frac{4}{3}-2\times \frac{2}{3}-2\times\frac{1}{6}=0$ by R3 and R5. If $n_{5^{+}}(v)=2$, say $v_1$, $v_2$ or $v_1$, $v_3$. In the former case, by Lemma \ref{4redu60}(4), we get that $n_b(v)\leq1$. Suppose $n_b(v)=1$, say $v_1$, if $v_1$ is also dangerous, then $\varpi(v)\leq1$ by Lemma \ref{bad}(6), then $ch_2(v)\geq 6-2-3\times \frac{1}{3}-\frac{4}{3}-\frac{2}{3}-\frac{3}{4}-\frac{1}{6}=\frac{1}{12}>0$. Otherwise, we get $ch_2(v)\geq 6-2-3\times \frac{1}{3}-\frac{4}{3}-\frac{2}{3}-\frac{2}{3}-2\times\frac{1}{6}=0$ by R6. Suppose $n_b(v)=0$, then $ch_2(v)\geq 6-2-3\times \frac{1}{3}-\frac{4}{3}-2\times\frac{2}{3}-2\times\frac{1}{6}=0$ by R3 and R6. In the latter case, suppose $n_b(v)=2$, then $ch_2(v)=ch_1(v)=6-2-3\times \frac{1}{3}-\frac{4}{3}-2\times \frac{5}{6}=0$. If $n_b(v)=1$, then $\varpi(v)\leq1$ by $\{\mathrm{(c)}, \mathrm{(j)}, \mathrm{(m)}\}$, it follows that $ch_2(v)\geq 6-2-3\times \frac{1}{3}-\frac{4}{3}-\frac{2}{3}-\frac{5}{6}-\frac{1}{6}=0$ by R5. Otherwise if $n_b(v)=0$, it follows that $ch_2(v)\geq 6-2-3\times \frac{1}{3}-\frac{4}{3}-2\times\frac{2}{3}-2\times\frac{1}{6}=0$ by R5. If $n_{5^{+}}(v)=3$, then $n_b(v)\leq2$ by $\{\mathrm{(c)}, \mathrm{(h)}, \mathrm{(i)}\}$. If $n_b(v)=2$, by the same argument, we have $ch_2(v)\geq6-2-3\times\frac{1}{3}-\frac{4}{3}-\max\{\frac{5}{6}+\frac{3}{4}, \frac{5}{6}+\frac{2}{3}+\frac{1}{6}\}=0$ by R5 and R6. If $n_b(v)\leq1$, then $ch_2(v)\geq6-2-3\times\frac{1}{3}-\frac{4}{3}-\max\{\frac{5}{6}+\frac{2}{3}, \frac{2}{3}+\frac{3}{4}\}-\frac{1}{6}=0$ by R5 and R6. If $n_{5^{+}}(v)=4$, then there are at most two bad vertices by Lemma \ref{4redu60}(4), thus $ch_2(v)\geq6-2-3\times\frac{1}{3}-\frac{4}{3}-2\times\frac{3}{4}=\frac{1}{6}>0$ by R3 and R6.

If $d(v)$ is odd, note that $v$ is also bad, it follows that $f_{3,4}(v)=f_{3b}(v)=f_{4b}(v)=f_{bb}(v)=0$. Then $ch_2(v)\geq d(v)-2-\frac{4}{3}f_{3,3}(v)-1-\frac{1}{3}(d(v)-f_3(v))=\frac{d(v)-7}{6}\geq0$ when $d(v)\geq7$.\medskip

\textbf{Let $v$ be a $5$-vertex in $G'$}. Let $f_1=v_1v_2v$ and assume that $f_2$ is worst. If there exists a $3$-vertex lying on $f_1$, say $v_1$, then $d(v_2)\geq5$ and $v_2$ is not bad by $\{\mathrm{(c)}, \mathrm{(f)}\}$. It follows that $ch_2(v)=ch_1(v)\geq5-2-3\times \frac{1}{3}-\frac{4}{3}-\frac{2}{3}=0$ by R3 and R4. If $d(v_1)=d(v_2)=4$, then $ch_2(v)=ch_1(v)=5-2-3\times \frac{1}{3}-\frac{4}{3}-\frac{2}{3}=0$ by R3. Otherwise, there exists a $5^{+}$-vertex lying on $f_1$ which is not bad by $\{\mathrm{(c)}, \mathrm{(i)}\}$, then $\varpi(v)\leq1$ by $\{\mathrm{(a)}\,$or$\, \mathrm{(b)}, \mathrm{(c)}, \mathrm{(m)}\}$. It follows that $ch_2(v)\geq 5-2-3\times \frac{1}{3}-\frac{4}{3}-\frac{1}{2}-\frac{1}{6}=0$ by R3 and R5. If there are two $5^{+}$-vertices lying on $f_1$, it follows from $\{\mathrm{(c)}, \mathrm{(h)}\}$ that $n_b(v)\leq1$, thus $ch_2(v)\geq 5-2-3\times \frac{1}{3}-\frac{4}{3}-\frac{1}{2}-\frac{1}{6}=0$ by R3 and R5.\medskip

\textbf{Let $f$ be a $6^{+}$-face in $G'$}. Then $ch_2(f)\geq 0$ by R1 and R7.\medskip

\textbf{Let $f$ be a $3$-face in $G'$}. Let $f=v_1v_2v_3$, we next consider different cases corresponding to the shape of $f$.
If $f$ is poor, it follows that $ch_2(f)\geq -2+\frac{1}{3}+2\times \frac{2}{3}+\min\{\frac{1}{3}, 2\times \frac{1}{6}\}=0$ by R1, R7 and R8. In particular, if there exists at least one $4$-vertex which is not vice on $f$, then $ch_2(f)\geq -2+1+\frac{1}{3}+\frac{2}{3}=0$ by R2.1.
If $d(v_1)=3$, $3\leq d(v_2)\leq4$, $d(v_3)\geq5$, note that $v_2$ cannot be bad, then $ch_2(f)\geq -2+\frac{1}{3}+\min\{\frac{1}{3}+\frac{4}{3}, \frac{2}{3}+1\}=0$ by R3.
If $d(v_1)=3$, $d(v_2)\geq5$, $d(v_3)\geq5$, by Lemma \ref{4redu60}, there is at most one bad vertex contained in $\{v_2,v_3\}$. It follows that $ch_2(f)\geq -2+\frac{1}{3}+\min\{1+\frac{2}{3}, 2\times \frac{5}{6}\}=0$ by R4. If $d(v_i)=4$ for each $i\in\{1,2,3\}$, then $ch_2(f)=-2+3\times \frac{2}{3}=0$ by R2.3. If $d(v_1)=4$, $d(v_2)=4$, $d(v_3)\geq5$, it follows that there is at most one bad vertex contained in $\{v_1, v_2, v_3\}$, then $ch_2(f)\geq-2+\min\{\frac{2}{3}+\frac{1}{2}+\frac{5}{6}, 3\times\frac{2}{3}\}=0$. If $d(v_1)=4$ and $d(v_i)\geq5$ for each $i\in\{2,3\}$, then $v_2$ and $v_3$ are not bad at the same time, it follows that $ch_2(v)\geq-2+\frac{2}{3}+\min\{\frac{1}{2}+\frac{5}{6}, 2\times\frac{2}{3}\}=0$. If $d(v_i)\geq5$ for all $i\in\{1,2,3\}$, then $ch_2(f)\geq-2+\min\{3\times\frac{2}{3}, \frac{1}{2}+2\times\frac{3}{4}, 2\times\frac{1}{2}+1\}=0$ by R6.

Hence, $ch_2(x)\geq0$ for all $x\in V(G')\cup F(G')$, this contradiction completes the proof of Theorem \ref{thm3}.

\end{document}